\numberwithin{equation}{section}
\newtheorem{theorem}{Theorem}[section]
\newtheorem{lemma}{Lemma}[section]
\newtheorem{pro}{Proposition}[section]
\newtheorem{definition}{Definition}[section]
\newtheorem{remark}{Remark}[section]
\newtheorem{assume}{Assumption}[section]
\newtheorem{corollary}{Corollary}[section]
\newcommand{\blue}[1]{{\color{blue}#1}}
\title{On a Merton problem with irreversible healthcare investment}
\date{\today.\ A previous version of this paper circulated under the title ``Consumption decision, portfolio choice and healthcare irreversible investment{.''}}
\author[1]{ Giorgio Ferrari$^\ast$ \quad and \quad Shihao Zhu$^\dagger$}
\thanks{$^\ast$Center for Mathematical Economics (IMW), Bielefeld University, Universit{\"a}tsstrasse 25, 33615, Bielefeld, Germany (giorgio.ferrari@uni-bielefeld.de).}
\thanks{$^\dagger$Institute of Insurance Science, Ulm University, Helmholtzstr.\ 20, 89069, Ulm, Germany (shihao.zhu@uni-ulm.de).}
\begin{document}
\newpage

\begin{abstract}

We propose a tractable dynamic framework for the joint determination of optimal
consumption, portfolio choice, and irreversible healthcare investment. Our
model is based on Merton's portfolio and consumption problem, where, in
addition, {the agent can optimally choose the time at which she undertakes
healthcare investment at a fixed continuous rate.} Health depreciates with age
and directly affects the agent's force of mortality, so that investment in
healthcare reduces the agent's mortality risk. The resulting optimization
problem is formulated as a stochastic control-stopping problem with a random
time horizon and state variables given by the agent's wealth and health capital.
We transform this problem into its dual version, which is a two-dimensional
optimal stopping problem with interconnected dynamics. Regularity of the
optimal stopping value function is derived, and the related free boundary is characterized
through a nonlinear integral equation, which we compute numerically. In the
original coordinates, the agent thus invests in healthcare whenever her wealth
exceeds a health-dependent transformed version of the optimal stopping
boundary. We also provide numerical illustrations of the optimal strategies and
discuss some financial implications.

\end{abstract}

\maketitle

\vspace{2mm}

\noindent{\bf Keywords:}\ Optimal timing of health investment; Optimal consumption; Optimal portfolio choice; 
 Optimal stopping; 
Stochastic control.

\vspace{2mm}

\noindent{\bf MSC Classification:}\ 91B70, 93E20, 60G40.
\vspace{2mm}

\noindent{\bf JEL Classification:}\ G11, E21, I13.
\vspace{2mm}

\section{Introduction}

It has been recognized that expenditures on medical services, annual physical
exams, and exercise can be viewed as investments in health capital and analyzed
using the tools of capital theory. This approach has enabled economists to
derive propositions about the pattern of healthcare spending over an
individual's lifetime and to describe the behavior of health capital over the
life cycle. For example, the demand-for-health model developed by
\cite{grossman1972concept} extends human capital theory by explicitly
incorporating health and recognizing that there are both consumption and
investment motives for investing in health. The basic features of the model are
(1) that health can be viewed as a durable capital stock that produces an output
of healthy time, (2) that individuals inherit an initial stock of health that
depreciates with age, (3) that the stock of health can be increased by
investment, and (4) that the individual demands health (a) for its
utility-enhancing effects (the consumption motive), and (b) for its effect on
the amount of healthy time (the investment motive).

Based on the aforementioned standard model assumptions, various health
economists have extended Grossman's dynamic health investment model. These
extensions address, for example, the introduction of uncertainty into the
theoretical model (see, e.g., \cite{bolin2020consumption},  \cite{cropper1977health},
 and \cite{ehrlich2000uncertain}) or the
distribution of health within the family (see \cite{bolin2001family} and \cite{jacobson2000family}, among many others).

Empirical evidence suggests that health crucially influences an agent's
financial decisions (see, e.g., \cite{atella2012household}, \cite{rosen2004portfolio},
\cite{smith2009impact}). In particular, the
literature reveals that health status is positively correlated with income,
consumption, and asset holdings, and negatively correlated with health
expenditures. To account for this fact, \cite{hugonnier2013health} propose a
dynamic framework for the joint determination of optimal consumption, portfolio
holdings, and health investment. They solve for the optimal rules in closed form
and provide parameter estimates that confirm the relevance of all the main
characteristics of the model. More recently, \cite{guasoni2019consumption}
focus on a representative household that makes consumption, investment, and
healthcare spending decisions in order to maximize welfare under time-separable
utilities. In \cite{guasoni2019consumption}, the endogenous force of mortality is
taken as the sole state variable, in addition to wealth, and the resulting
optimal stochastic control problem is reduced to the study of a nonlinear
ordinary differential equation. This equation is shown to have a unique
solution, which has an explicit expression in the old-age limit. Further,
\cite{aurand2021mortality} study optimal consumption, investment, and
healthcare spending under Epstein--Zin preferences.

As in \cite{hugonnier2013health}, in this paper we combine two well-established
frameworks from the financial and health economics literature within a unified
setup. However, differently from \cite{hugonnier2013health}, health-related
decisions are approached from a different viewpoint. Specifically, we start from
\cite{merton1971optimum}'s portfolio and consumption choice problem and append
to this model the determination of the time of health investment (e.g., joining
a gym club or undertaking precautionary health measures). Meanwhile, the above essential features of
\cite{grossman1972concept}'s canonical model are retained.

In what follows, a distinction is made and maintained between curative and
precautionary health investments. This distinction is important since the two
types of investment may behave quite differently over the life cycle.
Specifically, curative health investments are defined as investments that have
direct effects on the stock of health or the rate of depreciation of health, or
both, and are produced using medical-care goods and services (e.g., taking
antibiotics to cure a bacterial infection or undergoing surgery to remove a
tumor). As a result, curative health investments are not among an individual's
choice variables. In contrast, precautionary health investments (e.g., regular
exercise, a healthy diet, or regular health check-ups and screenings) are
defined as those that are under the control of an agent and indirectly affect
the rate of depreciation of the stock of health by directly affecting the stock
of health itself. That is, the current stock of health, which is directly
influenced by precautionary health investments, determines the rate of
depreciation of health. The view that precautionary health investments
indirectly affect the rate of depreciation of health in the aforementioned
manner is consistent with ample medical evidence. Therefore, we focus on
precautionary health investments in our model (see \eqref{2-3} in Section \ref{sec-2}).

The introduction of the option to choose the time of a precautionary health
investment raises several questions. First, if an individual is faced with the
choice of when to buy preventive health services, how should she optimally
behave? In particular, a non-trivial trade-off arises: if the agent invests in
healthcare too early, she reduces her wealth, thus affecting consumption and
portfolio choice; if she invests in health too late, this will negatively affect
utility and survival probability. Second, how do optimal consumption and
investment strategies react to the introduction of health factors?

The answers to these questions, which are collected in Sections \ref{sec5} and \ref{sec:numerics}, are
intuitively convincing. We show that it is optimal to invest in health when the
agent's wealth first reaches an endogenously determined boundary curve, which
depends on the agent's health status. Intuitively, if the agent is sufficiently
rich (her wealth exceeds the corresponding boundary), then health investment
should be undertaken immediately; otherwise, it is optimal to wait for an
increase in wealth. {Moreover, our model highlights how health affects consumption and asset
allocation. We first consider a no-investment benchmark in which $I=K=0$.
In this case, the optimal consumption--wealth ratio is independent of wealth
and varies only mildly with health, while the optimal portfolio share is
constant. Under the parameter values used for this benchmark, healthier agents
consume a slightly smaller fraction of their wealth, reflecting the longer
effective planning horizon induced by lower mortality. When healthcare
investment is available, the optimal consumption and portfolio policies differ
before and after the agent undertakes healthcare investment, and they exhibit
visible changes around the endogenous investment boundary.

The numerical analysis in Section \ref{sec:numerics} also illustrates the
economic implications of the optimal healthcare investment boundary under the
baseline calibration reported in Table \ref{tab1}, where $I>0$ and
$K=I^\beta$. Under this calibration, the boundary in the primal variables is
increasing in health: healthier agents require a higher level of wealth to
initiate precautionary healthcare investment. This is because the same absolute
improvement in the future health path generates a larger marginal utility gain
and a larger reduction in mortality risk when the agent is in poor health,
whereas its marginal benefit is smaller when the agent is already healthy.}

\subsection{Overview of the mathematical analysis}

From a mathematical point of view, our model leads to a
\emph{random-horizon, two-dimensional stochastic control problem with
discretionary stopping}. The random horizon is the agent's time of death,
\(\eta\). We do not require \(\eta\) to be a stopping time with respect to the
financial-market filtration \(\mathbb F\). Instead, its conditional distribution
is determined by an intensity that depends on the agent's health status. Through
health investment, the agent improves her health capital, thereby reducing the
force of mortality and changing the distribution of \(\eta\).

The two state variables are the wealth process \(X\) and the health capital
process \(H\). The agent chooses the consumption rate $c$, the portfolio $\pi$, and the
time $\tau$ at which she starts a costly continuous healthcare investment
at the fixed rate $I$. At time \(\tau\), the dynamics change:
health capital is improved through the investment, reducing mortality risk,
whereas wealth is reduced by the investment cost. The objective is to maximize
expected intertemporal utility from consumption and health up to the random time
of death.

\begin{sloppypar}
Problems with a similar structure arise, for instance, in retirement-timing
models, where the agent consumes, invests in risky assets, and decides when to
retire (see, e.g., \cite{jin2006disutility}, \cite{ferrari2023optimal}, and \cite{yang2018optimal}). Combined stochastic control and optimal stopping
problems also arise in mathematical finance, for example in the pricing of
American contingent claims under constraints and in utility maximization
problems with discretionary stopping; see, e.g.,
\cite{karatzas1998hedging} and \cite{karatzas2000utility}. In order to handle
the interaction between consumption, portfolio choice, and the healthcare
investment decision, we combine duality methods with a free-boundary analysis.
The analysis proceeds as follows.
\end{sloppypar}

\textbf{Step 1.} We first transform the original stochastic control-stopping
problem, whose value function is denoted by \(V\), into a dual problem using
martingale and duality methods, in the spirit of
\cite{karatzas2000utility} and \cite{yang2018optimal}. This transformation
reduces the consumption-portfolio component of the problem and leads to a dual
optimal stopping problem with value function \(J\).

\textbf{Step 2.} We then study the dual problem, which is a
\emph{two-dimensional optimal stopping problem with interconnected dynamics}.
The dual variable \(Z\) evolves as a geometric diffusion whose drift depends on
the health capital process \(H\). At the same time, \(H\) affects the force of mortality and hence the discount factor in the stopping functional. This coupling
makes the optimal stopping problem non-standard.

A particular difficulty is that no monotonicity of the free boundary with
respect to health is available in general. Indeed, the mortality channel and the
investment-benefit channel may interact in opposing ways. Since monotonicity of
the boundary is often a key ingredient in proving boundary continuity or higher
regularity in optimal stopping problems, standard monotonicity-based arguments
cannot be directly applied here. For related discussions in free-boundary
problems, see, for example, \cite{de2019lipschitz}.

{Our analysis therefore proceeds in two stages. First, we study the transformed
stopping value \(\widehat J(z,h):=J(z,h)-\widehat W(z,h)\), where
\(\widehat W\) is the payoff from immediate investment. {We prove that
\(\widehat J\) is continuous and use this regularity to show
that the stopping region is closed.} Together with the monotonicity of
\(\widehat J\) in the dual variable \(z\), this allows us to represent the
stopping set through a boundary \(b(h)\), so that
\(\mathcal I=\{(z,h)\in\mathcal O:0<z\le b(h)\}\). Moreover, the boundary
\(b\) is shown to be upper-semicontinuous.}

{Second, in order to characterize the boundary, we introduce a time-inhomogeneous
reformulation. For a fixed initial health level \(h\), we write
\(\widetilde h(t)=he^{-\delta t}\) and
\(\widetilde b(t)=b(\widetilde h(t))\). This transforms the two-dimensional autonomous stopping problem into an
equivalent one-dimensional time-inhomogeneous stopping problem with value
function \(\widetilde J\). In this formulation, we establish interior
\(C^{1,2}\)-regularity of \(\widetilde J\) in the continuation region and a
spatial smooth-fit property at finite boundary points. These regularity
properties are sufficient to apply a weak version of Dynkin's formula and to
derive an integral representation of \(\widetilde J\). Evaluating this
representation at the boundary yields a nonlinear integral equation for
\(\widetilde b\), and hence for the original boundary \(b\).}

\textbf{Step 3.} We finally return to the original wealth-health coordinates.
Using the duality relations and the regularity achieved in Step 2, we express the optimal consumption and portfolio
policies, as well as the optimal healthcare investment time, in terms of the
dual value function and the optimal stopping boundary. In particular, in the
primal variables the agent invests in healthcare when wealth first reaches a
health-dependent transformed version of the dual stopping boundary.

In summary, our contribution is twofold. Economically, we introduce a timing
margin for irreversible healthcare investment into a continuous-time
consumption-portfolio framework with health-dependent mortality. This allows us
to study when an agent should start a sustained preventive healthcare
investment, and how this decision interacts with consumption and portfolio
choice. Mathematically, the paper analyzes a two-dimensional optimal stopping
problem with coupled state dynamics and a boundary that need not be monotone. We
establish the regularity needed to represent the stopping region, derive a
nonlinear integral equation for the boundary, and solve this equation
numerically to study the sensitivity of the optimal healthcare investment
strategy to the model parameters.


\subsection{Plan of the paper}
The rest of the paper is organized as follows. In Section \ref{sec-2}, we introduce the model. We transform the original stochastic control-stopping problem into a pure stopping problem in Section \ref{sec-3}, while in Section \ref{sec4} we study the (candidate) dual optimal stopping problem. In Section \ref{sec5}, we provide the optimal health investment boundary in primal variables, and in Section \ref{sec:numerics} we present a detailed numerical study and provide some financial implications. Section \ref{sec7} concludes. 
Appendix \ref{seca} collects the proofs of some results of Sections \ref{sec-3}, \ref{sec4} and \ref{sec5}. In Appendix \ref{numerical}, we give the details of our numerical method used in Section \ref{sec:numerics}.

\section{Setting and problem formulation}\label{sec-2}

\subsection{Setting}
Let $({\Omega}, {\mathcal{F}}, {\mathbb{P}})$ be a complete probability space, endowed with the filtration $\mathbb{F}:=\{ \mathcal{F}_t, t \geq 0 \}$ generated by a one-dimensional Brownian motion $B:=\{ B_t, t \geq 0\}$ and augmented by the $\mathbb{P}$-null sets of $\mathcal{F}$. We assume that there exists a random variable $\Theta$, constructed on $(\Omega, \mathcal{F})$, independent of $\mathcal{F}_\infty$ and such that 
\begin{align*}
    \mathbb{P}[\Theta>v] =e^{-v}, \ v \geq 0.
\end{align*}

\begin{sloppypar}
Consider an agent whose lifespan is determined through health capital.\ In the spirit of \cite{hugonnier2013health}, we model the force of mortality process $M := \{ M_t, t \geq 0  \}$ as
\begin{align}\label{2-0}
M_t = m^0+m^1H_t^{-\kappa}, 
\end{align}
for some non-negative constants $m^0, m^1$, and $\kappa > 0$. Here, $H:=\{ H_t, t \geq 0 \}$ is the health capital process. {In (\ref{2-0}), the endogenous component of the mortality process is modeled as a function of the agent's current health status, rather than as a direct function of her current health investment (cf.\ \cite{guasoni2019consumption}). This specification precludes the possibility of arbitrarily modifying the force of mortality through health investment. Although the framework in \cite{guasoni2019consumption} similarly avoids unbounded mortality manipulation, it achieves this by allowing healthcare investment to affect mortality only through an efficacy function that constrains its marginal impact.}

\end{sloppypar}

 We define the random death time of the agent $\eta$ as\footnote{Notice that $\eta$ is not an $\mathbb{F}$-stopping time. It is instead a $\mathbb{G}$-stopping time, where $ \mathcal{G}_t:=\mathcal{F}_t \vee \sigma(\mathds{1}_{\{\eta \leq u\}}; 0\leq u \leq t)$, for $t\geq 0$. This is the enlarged filtration generated by the underlying filtration $\mathbb{F}$ and the process $\{\mathds{1}_{\{\eta \leq u\}}, 0 \leq u \leq t\}$. The filtration $\mathbb{G}$ is the smallest one which contains $\mathbb{F}$ and such that $\eta$ is a $\mathbb{G}$-stopping time (see, e.g., Chapter 7 in \cite{jeanblanc2009mathematical}).}
\begin{align*}
    \eta := \inf \bigg\{t \geq 0: \int^t_0 M_u du \geq \Theta \bigg\},
\end{align*}
which is such that $\{\eta \geq t      \}= \big\{  \int^t_0 M_u du \leq \Theta  \big \}$, where we have assumed that {$\int^t_0 M_u du < \infty$ a.s.} for any $t\geq 0$. The conditional distribution function of $\eta$ is such that (see, e.g., Lemma 7.3.2.1 in \cite{jeanblanc2009mathematical}),
\begin{align}\label{2-1} 
\mathbb{P}[\eta >t | \mathcal{F}_{t}] =\exp{\bigg\{-\int^t_0 M_u du\bigg\}}, \quad t \geq 0.
\end{align}

Let $\tau$ be an $\mathbb{F}$-stopping time representing the time at which the agent invests in health. Before investing in health, the agent's health status $H^1:=\{H^1_t, 0 \leq t \leq \tau\}$ evolves as
\begin{align}\label{2-2}
dH^1_t = -\delta H_t^1dt, \ \text{for all} \ t \in (0,\tau], \ H^1_0=h>0,
\end{align}
where $\delta>0$ represents the decay rate of the health. After the agent {starts investing in healthcare at a positive predetermined rate} $I$, the agent's health status $H^2:= \{ H^2_t, t \geq \tau   \}$ {has its drift increased by} the positive constant $K$, so that
\begin{align}\label{2-3}
dH^2_t = (-\delta H^2_t+K)dt, \ \text{for all} \ t > \tau, \ H^2_\tau=he^{-\delta \tau}.
\end{align}
From (\ref{2-2}) and (\ref{2-3}) one then has that the overall health capital $H$ evolves as 
\begin{align}\label{2-2-1}
dH_t= (-\delta H_t+K\mathds{1}_{\{t > \tau\}})dt, \ \text{for all} \ t \geq 0, \ H_0 =h>0.
\end{align}

In the remark below, we comment on some features of the above health investment model and explain the connections with the existing literature. 

\begin{remark}\label{remark}

(1) The fact that health investment is positive is a standard requirement in Health Economics. Health investment is irreversible in the sense that the agent cannot reduce her health through negative expenditure. Irreversibility of investment is a key economic feature that makes health fundamentally different from financial assets or housing (see, e.g., \cite{yogo2016portfolio}).

(2) We represent the health variables $H^1$ (cf.\ (\ref{2-2})) and $H^2$ (cf.\ (\ref{2-3})) as deterministic processes for the sake of mathematical convenience. When it comes to modeling, it is indeed possible to introduce noise or even incorporate random jumps to simulate sudden morbidity shocks. However, this would introduce mathematical intricacies, which we defer to future research.

(3) The state equation (\ref{2-3}) is similar to \cite{bolin2020consumption}. In (\ref{2-3}), a constant $K$ can be regarded as a kind of health production function, mapping precautionary health investment $I$ into the gross rate of change of the health stock. In \cite{bolin2020consumption} it is assumed that the marginal product of health investment is positive. The health production function therefore captures the direct influence of precautionary health investment on the health stock, a defining feature of such investment as it is discussed in the Introduction. On the other hand, Equation (\ref{2-3}) implies that the agent is able to commit to investing forever at a fixed {rate} $I$, resulting in a constant {contribution} $K$ {to the drift} of the stock of health. This assumption might be somewhat restrictive; however, it still manages to offer fundamental economic insights. Specifically, we adopt $K:=I^\beta, \beta \in (0,1),$ in our numerical study (cf.\ Section 6), similar to that used in \cite{dalgaard2014optimal} and \cite{yogo2016portfolio}. The parameter $\beta$ specifies the degree of decreasing returns of health investment.

 (4) A classical model for force of mortality is the so-called Gompertz-Makeham law (see for instance \cite{makeham1860law}), which corresponds to 
\begin{align*}
M_t =  \bar{A} e^ {\bar{B} t}+\bar{C}, \ t > 0, \qquad M_0 =\bar{A}+\bar{C}.
\end{align*}
Here, $\bar{A}$ is known as the baseline mortality, the term $\bar{B}$ can be thought of as the `actuarial aging rate', in that its magnitude determines how fast the rate of dying will increase with the addition of extra years, while $\bar{C}$ is a constant representing age-independent mortality. As a matter of fact, before health investment, our choice of the force of mortality reads as $M_t=m^0+m^1h^{-\kappa}e^{\delta \kappa t}$ (cf.\ (\ref{2-0}) and (\ref{2-2})), which has a structure compatible {with} the Gompertz-Makeham law. {After healthcare investment, the force of mortality is given by
\begin{align*}
	M_t=m^0+m^1 \bigg[(h-\frac{K}{\delta} e^{\delta \tau} )e^{-\delta t}+\frac{K}{\delta}\bigg]^{-\kappa}.\end{align*}
When the initial health stock satisfies $h>\frac{K}{\delta}e^{\delta \tau}$ (i.e.,\ the health stock at investment time $H_\tau^1$ is larger than $\frac{K}{\delta}$), the expression reduces to a form that conforms with the Gompertz-Makeham law, exhibiting the characteristic exponential increase in mortality over time. In this case, a precautionary healthcare investment serves mainly to moderate the rate at which mortality increases (see Figure \ref{simulation}). In contrast, when $h<\frac{K}{\delta} e^{\delta \tau}$, the agent begins with a relatively poor health status. Under this condition, healthcare investment has a strong effect, enabling the system to converge to a steady-state health level ($\frac{K}{\delta}$). As a result, the force of mortality is significantly reduced, though it remains bounded below by the baseline mortality level $m^0$ (see Figure \ref{simulation1}). The deviation from the Gompertz-Makeham law highlights the realistic heterogeneity in individual responses to healthcare investments, where the aggregated outcome over a large population still conforms to the classical law. 
 Simulations of the health process $H$ from (\ref{2-2-1}) and force of mortality process $M$ from (\ref{2-0}) are displayed in Figures \ref{simulation} and \ref{simulation1}, where the parameter $\delta=0.0055$ as in \cite{hugonnier2013health}.} 


\begin{figure}[htbp]
    \centering
    \begin{subfigure}{0.45\textwidth}
        \includegraphics[width=\linewidth]{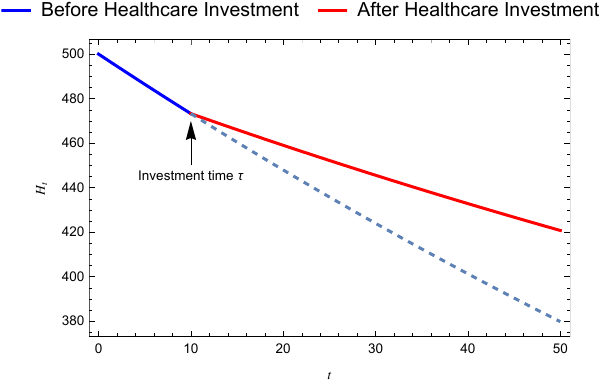}
        \caption{Health process $H_t$.}
        \label{p1}
    \end{subfigure}
        \begin{subfigure}{0.45\textwidth}
        \includegraphics[width=\linewidth]{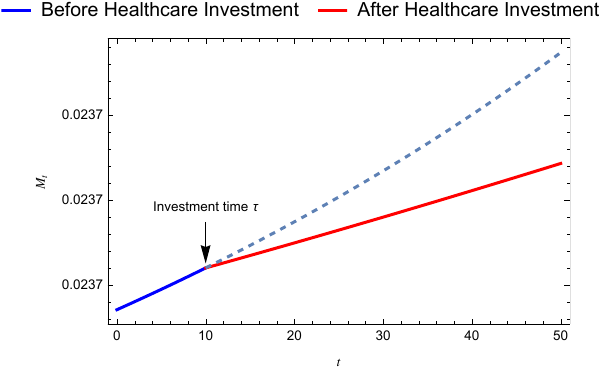}
        \caption{Force of mortality process $M_t$.}
        \label{p2}
    \end{subfigure}
    \caption{{Simulations of the health process $H_t$ and force of mortality process $M_t$, assuming a healthcare investment time $\tau=10$ and an initial health level $h=500$, with all other parameters as specified in Table \ref{tab1}. Left: Evolution of health capital (vertical axis) over time $t$ (in years, horizontal axis). Right: Evolution of the force of mortality (vertical axis) over time $t$ (in years, horizontal axis).}}
    \label{simulation}
\end{figure}

\begin{figure}[htbp]
    \centering
    \begin{subfigure}{0.45\textwidth}
        \includegraphics[width=\linewidth]{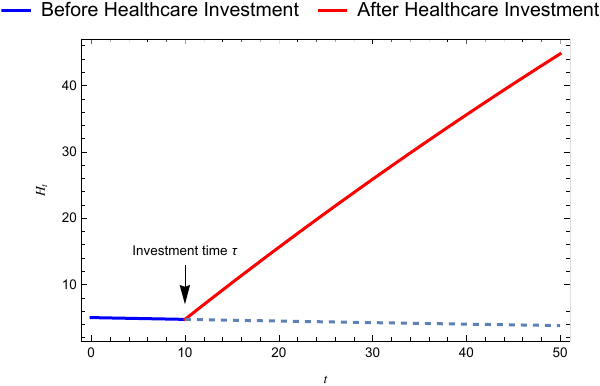}
        \caption{Health process $H_t$.}
        \label{p3}
    \end{subfigure}
        \begin{subfigure}{0.45\textwidth}
        \includegraphics[width=\linewidth]{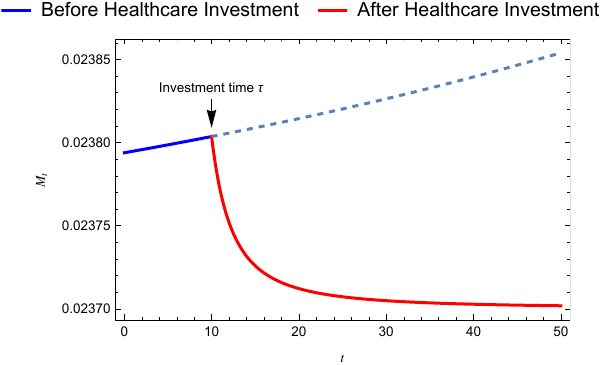}
        \caption{Force of mortality process $M_t$.}
        \label{p4}
    \end{subfigure}
    \caption{{Simulations of the health process $H_t$ and force of mortality process $M_t$, assuming a healthcare investment time $\tau=10$ and an initial health level $h=5$, with all other parameters as specified in Table \ref{tab1}. Left: Evolution of health capital (vertical axis) over time $t$ (in years, horizontal axis). Right: Evolution of the force of mortality (vertical axis) over time $t$ (in years, horizontal axis).}  }
    \label{simulation1}
\end{figure}

\end{remark}

We assume that the agent also invests in a financial market with two assets. One of them is a risk-free bond, whose price $S^0:=\{ S^0_t, t \geq 0  \}$ evolves as 
\begin{align*}
dS^0_t = rS^0_tdt, \quad S^0_0 =s^0>0,
\end{align*}
where $r>0$ is a constant risk-free rate. The second one is a stock, whose price is denoted by $S:=\{S_t, t \geq 0\}$ and it satisfies the stochastic differential equation
\begin{align*}
dS_t &= \mu S_t dt + \sigma S_t dB_t, \quad S_0 =s>0,
\end{align*}
where $\mu \in \mathbb{ R}$ and $\sigma>0$ are given constants. 

The agent also consumes from her wealth, while investing in the financial market. Denoting by $\pi_t$ the amount of wealth invested in the stock at time $t$, the agent then chooses $\pi_t$ as well as the rate of spending in consumption $c_t$ at time $t$. Therefore, the agent's wealth $X:= \{ X^{c,\pi, \tau}_t, t\geq 0 \}$ evolves as 
\begin{align}\label{2-4}
d X^{c,\pi, \tau}_t =  [\pi_t(\mu-r)+rX^{c,\pi,\tau}_t-c_t-I\mathds{1}_{\{t \geq \tau\}}]dt +\pi_t \sigma dB_t, \quad X_0^{c,\pi, \tau}= x.
\end{align}
In the following, we shall simply write $X$ to denote $X^{c,\pi, \tau}$, where needed.

\subsection{The optimization problem}
 Here and in the sequel, we set $\mathcal{O}:=\mathbb{R}_+^2$ with $\mathbb{R}_+:=(0,\infty)$, and we denote by $\mathcal{S}$ the class of $\mathbb{F}$-stopping times $\tau: \Omega \to [0,+\infty]$. Then we introduce the class of admissible strategies {as follows.}
\begin{definition}\label{admissiblecontrol}
Let $(x,h) \in \mathcal{O}$ be given and fixed. The triplet of choices $(c,\pi, \tau)$ is called an \textbf{admissible strategy} for $(x,h)$, and we write $(c,\pi, \tau) \in \mathcal{A}(x,h)$, if it satisfies the following conditions: 
\begin{enumerate}[(i)]
	\item  $c$ and $\pi$ are progressively measurable with respect to $\mathbb{F}$, $\tau \in \mathcal{S}$;
	\item $c_t \geq 0$ for all $t \geq 0$ and $\int_0^T (c_t+|\pi_t|^2)dt <\infty \ \mathbb{P}$-a.s., for any $T>0$; 
	\item  {$X^{c,\pi, \tau}_t \geq  \frac{I}{r} \mathds{1}_{\{t\geq  \tau\}}$} for all $t \geq 0$.
\end{enumerate}

\end{definition}
The constant $\frac{I}{r}$ in Condition (iii) is the present value of the future health payment of the agent, under the assumption that the agent is always {alive}. {By} (iii), the agent is able to consume and invest as long as her wealth level is above $\frac{I}{r}$  at time $t\geq \tau$. Before health investment, she should keep her wealth positive for further consumption or financial investment.

The agent's aim is then to maximize the expected utility
\begin{align}\label{2-5}
\mathbb{E}\bigg[  \int_0^{\eta  } e^{-\rho t} u(c_t, H_t)dt    \bigg]
\end{align}
over all $(c,\pi,\tau) \in \mathcal{A}(x,h).$ In (\ref{2-5}), $\rho$ is a positive discount rate and $u(c,h)=c^{\alpha}h^{1-\alpha}$, where $0<\alpha <1$. Thanks to Fubini's Theorem and the tower property, we can disentangle the market risk and the mortality risk and write
\begin{align*}
\mathbb{E}\bigg[  \int_0^{\eta  } e^{-\rho t} u(c_t,H_t)dt  \bigg] &= \mathbb{E}\bigg[  \int_0^{\infty } e^{-\rho t} u(c_t,H_t) \mathds{1}_{\{t <\eta  \}}dt  \bigg ] =   \int_0^{\infty} \mathbb{E}\bigg[e^{-\rho t}  u(c_t ,H_t) \mathds{1}_{\{ t <\eta  \}}  \bigg]dt \\
&=   \int_0^{\infty} \mathbb{E}\bigg[ \mathbb{E}\Big[e^{-\rho t}  u(c_t ,H_t) \mathds{1}_{\{ t <\eta  \}}\Big| \mathcal{F}_t\Big]  \bigg]dt \\
&= \int_0^{\infty } \mathbb{E}\bigg[e^{-\rho t} u(c_t ,H_t) \mathbb{E}\Big[\mathds{1}_{\{ t <\eta  \}}\Big| \mathcal{F}_t\Big]  \bigg]dt \\
&= \mathbb{E} \bigg[  \int_0^{\infty }  e^{-\int^t_0 (\rho+ M_u) du} u(c_t,H_t)dt   \bigg ],
\end{align*} 
where (\ref{2-1}) has also been used. Hence, given the underlying Markovian setting, the agent aims at determining
{\begin{align}\label{2-6}
V(x,h):= \sup_{(c, \pi, \tau) \in  \mathcal{A}(x,h)  } \mathbb{E}_{x,h} \bigg[ \int_0^{\infty }  e^{-\int^t_0 (\rho+ M_u) du} u(c_t, H_t)  dt     \bigg],
\end{align}}
where $\mathbb{E}_{x,h}$ denotes the expectation under $\mathbb{P}_{x,h}(\cdot):=\mathbb{P}(\cdot|X_0=x,H_0=h).$ In the rest of the paper, we shall focus on (\ref{2-6}).

\section{From control-stopping to pure stopping}\label{sec-3}
\subsection{The static budget constraint}

We define the market price of risk $\theta:= \frac{\mu-r}{\sigma}$. {Throughout the paper we assume \(\theta\neq0\) (equivalently, \(\mu\neq r\)).} For $\tau \in \mathcal{S}$, an application of It\^o's formula
leads on $\{t\geq \tau\}$ to
\begin{align}\label{3-1-2}
&e^{-rt-\theta B_t -\frac{1}{2}\theta^2 t}(X_t-\frac{I}{r})+\int^t_\tau e^{-ru-\theta B_u -\frac{1}{2}\theta^2 u} c_u du \nonumber\\
&=e^{-r\tau-\theta B_\tau -\frac{1}{2}\theta^2 \tau}(X_\tau-\frac{I}{r})+ \int^t_\tau e^{-ru-\theta B_u -\frac{1}{2}\theta^2 u}\bigg(\pi_u\sigma - \theta(X_u-\frac{I}{r})  \bigg)dB_u, 
\end{align}
and on $\{0\leq t < \tau \}$ to
\begin{align}\label{3-1-3}
&e^{-rt-\theta B_t -\frac{1}{2}\theta^2 t}X_t+\int^t_0 e^{-ru-\theta B_u -\frac{1}{2}\theta^2 u} c_u du =x+ \int^t_0 e^{-ru-\theta B_u -\frac{1}{2}\theta^2 u}(\pi_u\sigma - \theta X_u  )dB_u.
\end{align}
Since $X_t-\frac{I}{r}\mathds{1}_{\{t \geq  \tau\}} \geq  0$ for any $t \geq 0$, we can deduce that $X_\tau \geq \frac{I}{r}\geq  0$. For an admissible plan $(c,\pi,\tau) \in \mathcal{A}(x,h)$, the left-hand side of (\ref{3-1-3}) is nonnegative for $t \leq  \tau$, and so the It\^o's integral on the right-hand side is not only a continuous $\mathbb{P}$-local martingale, but also a supermartingale by {Fatou's lemma}. Thus, letting $\gamma_{t}:=e^{-rt-\theta B_t -\frac{1}{2}\theta^2 t}$, the optional sampling theorem implies the so-called budget constraint:
\begin{align}\label{3-2-1}
\mathbb{E}_{x,h}\big[{\gamma_{t \wedge \tau }}X_{t \wedge \tau}\big]+\mathbb{E}_{x,h}\bigg[ \int_0^{t \wedge \tau } \gamma_{u} c_u du                         \bigg]  \leq x, \ \text{if} \ t \geq 0.
\end{align}
By similar arguments on $(\ref{3-1-2})$ we also have 
\begin{align}\label{3-2-2}
\mathbb{E}_{x,h}\big[{\gamma_{t \vee \tau}}(X_{t \vee \tau}-\frac{I}{r})\big]+\mathbb{E}_{x,h}\bigg[ \int_\tau^{t \vee \tau } \gamma_{u} c_u du                         \bigg] { \leq \mathbb{E}_{x,h}\bigg[\gamma_\tau(  X_\tau -\frac{I}{r})\bigg]}, \ \text{if} \  t \geq 0.
\end{align}

\subsection{The agent's optimization problem after health investment}
In this subsection we consider the agent's optimization problem after health investment, and during this time period only consumption and portfolio choice have to be determined. Formally, the model in the previous section covers this case if we let $\tau =0$, and the force of mortality is specified as \(M_t^{2}\), \(t\geq 0\), where
\begin{align}\label{newm2}
	 M_t^{2}:=m^0+m^1\bigl(H_t^{2}\bigr)^{-\kappa},
	 \end{align}
and \(H_t^{2}\) is defined in \eqref{2-3}. Then, letting $\mathcal{A}_0(x,h):= \{ (c,\pi): (c,\pi,0) \in \mathcal{A}(x,h)\},$ where the subscript $0$ indicates that the investment time into health $\tau$ is equal to $0$, the agent's value function after health investment is 
\begin{align}\label{3-4-0}
\widehat{V}(x,h):=\sup_{(c, \pi) \in \mathcal{A}_0(x,h) }   \mathbb{E}_{x,h} \bigg[ \int_0^{\infty }  e^{-\int^t_0 (\rho+ M^2_u) du} u(c_t, H^2_t)  dt    \bigg],
\end{align}
with $H^2$ as defined in (\ref{2-3}) (with $\tau=0$). 

From the budget constraint (\ref{3-2-2}), recalling that $\gamma_{t}=e^{-rt-\theta B_t -\frac{1}{2}\theta^2 t}$ and for any pair $(c,\pi)\in \mathcal{A}_0(x,h)$ with a Lagrange multiplier $z>0$, we have
\begin{align}\label{3-4}
 \mathbb{E}_{x,h}& \bigg[    \int_0^{\infty }  e^{-\int^t_0 (\rho+ M^2_u) du} u(c_t, H^2_t)  dt  \bigg] \nonumber \\ \leq & \
 \mathbb{E}_{x,h} \bigg[    \int_0^{\infty }  e^{-\int^t_0 (\rho+ M^2_u) du} u(c_t, H^2_t)  dt  \bigg]   - z      \mathbb{E}_{x,h}\bigg[  \int_0^{\infty} \gamma_{t} c_t dt                         \bigg]  +z( x-\frac{I}{r})                  \nonumber               \\
 = &  \ \mathbb{E}_{x,h}\bigg[    \int_0^{\infty }  e^{-\int^t_0 (\rho+ M^2_u) du} u(c_t, H^2_t)  dt \bigg]\nonumber\\
 &- \mathbb{E}_{x,h}\bigg[ \int_0^{\infty}  e^{-\int^t_0 (\rho+ M^2_u) du}z P_t^2(h)  c_t dt             \bigg]+z( x-\frac{I}{r})  \nonumber  \\
= & \  \mathbb{E}_{x,h}\bigg[    \int_0^{\infty }  e^{-\int^t_0 (\rho+ M^2_u) du} \bigg(u(c_t, H^2_t) -z P_t^2(h)c_t\bigg) dt  \bigg]+ z( x-\frac{I}{r})  \nonumber \\
\leq & \  \mathbb{E}_{x,h}\bigg[    \int_0^{\infty }  e^{-\int^t_0 (\rho+ M^2_u) du}\widehat{u}(zP_t^2(h), H^2_t) dt  \bigg]+ z(x-\frac{I}{r}), 
\end{align}
where
\begin{align}\label{3-4-1}
P^2_t(h):= \gamma_{t}e^{\int^t_0 (\rho+ M^2_u) du } \quad \text{and} \quad \widehat{u}(z, h):= \sup_{c \geq 0}[u(c,h)-c z].
\end{align}

Let then $Z^2_t :=z P^2_t(h).$ By It\^o's formula, we obtain that the dual variable $Z^2$ satisfies
\begin{align}\label{eqz2}
dZ^2_t=  (\rho-r+ M_t^{2})Z^2_tdt - \theta Z^2_t dB_t, \quad Z^2_0= z>0,
\end{align}
and we set
\begin{align}\label{eqw}
W(z,h):=  \mathbb{E}_{z,h}\bigg[    \int_0^{\infty }  e^{-\int^t_0 (\rho+ M^2_u) du} \widehat{u}(Z^2_t, H^2_t) dt  \bigg],
\end{align}
with $ \mathbb{E}_{z,h}$ being the expectation under $\mathbb{P}_{z,h}(\cdot):= \mathbb{P}(\cdot|Z^2_0=z,H^2_0=h)$. Hence,
\begin{align*}
\mathbb{E}_{x,h} \bigg[    \int_0^{\infty }  e^{-\int^t_0 (\rho+ M^2_u) du} u(c_t, H^2_t)  dt  \bigg] \leq W(z,h)+z(x-\frac{I}{r}),
\end{align*}
 for $z>0$ and $(x,h) \in \mathcal{O}'$, where $\mathcal{O}':=\{ (x,h)\in \mathcal{O}: x\geq \frac{I}{r} \}$.

We make the following \textbf{standing assumption}.
\begin{assume}\label{assume1}
	We assume $\rho > \alpha(r+\frac{1}{2}\theta^2)+\frac{\theta^2 \alpha^2}{2(1-\alpha)}+\delta (2\kappa+1)$ throughout the paper.
\end{assume}
Assumption \ref{assume1} gives a sufficient condition to ensure the finiteness of $W$ defined in (\ref{eqw}). Moreover, this condition is used in the subsequent analysis, in particular when applying the dominated convergence theorem.

\begin{pro}\label{Wregularity}
 $W$ is finite, strictly convex with respect to $z$, and such that
 {\(W\in C^{2,1}(\mathcal{O})\) and
 \(W_z\in C^{2,1}(\mathcal O)\).} Moreover, $W$ satisfies 
\begin{align}\label{3-6}
-\widehat{\mathcal{L}}W=\widehat{u}, \ \text{on} \quad  \mathcal{O}, 
\end{align}
where 
\begin{align}\label{3-7}
\widehat{\mathcal{L}}W:= \frac{1}{2}\theta^2 z^2W_{zz}+(\rho-r+m^0+m^1h^{-\kappa})zW_z  \nonumber\\+(-\delta h+K)W_h-(\rho+m^0+m^1h^{-\kappa})W.
\end{align}
\end{pro}
\begin{proof}
	The proof is given in Appendix \ref{proWregularity}.
\end{proof}

It is possible to relate $\widehat{V}$ to $W$ through the following duality relation.
\begin{theorem}\label{dualityrelation}
For given $(x,h) \in \mathcal{O}'$,  the
following dual relations hold:
\begin{align*}
\widehat{V}(x,h) = \inf_{z >0} [ W(z,h)+z(x-\frac{I}{r}) ], \quad W(z,h) =  \sup_{x\geq  \frac{I}{r}}[\widehat{V}(x,h)-z(x-\frac{I}{r})].
\end{align*}
Moreover, {if \(x>I/r\), then} $c_t^*=\mathcal{I}^u(Z_t^{2*},H_t^2) $ with $\mathcal{I}^u(\cdot,h)$ being the inverse of the marginal utility function $u_c(\cdot, h)$ is the optimal consumption process,  $\pi^*_t= \frac{\theta}{\sigma} Z^{2*}_t W_{zz}(Z^{2*}_t,H_t^2)$ is the optimal portfolio process, and the corresponding optimal wealth process is given by $X_t^*=-W_z(Z^{2*}_t,H_t^2)+\frac{I}{r}$, where $Z_t^{2*}$ is the solution to Equation (\ref{eqz2}) with the initial condition $z^*:=z^*(x,h)$ uniquely satisfying  $x=-W_z(z^*,h)+\frac{I}{r}$.
\end{theorem}

\begin{proof}

The proof is given in Appendix \ref{produalityralation}.
\end{proof}

\subsection{The dual optimal stopping problem}

Notice that, for any $(x,h)\in \mathcal{O}$, it holds by definition of $V$ in (\ref{2-6}) and strong Markov property that 
\begin{align}\label{3-8-1}
V(x,h) \leq & \sup_{(c,\pi,\tau) \in \mathcal{A}(x,h)} \mathbb{E}_{x,h} \bigg[ \int_0^{\tau}  e^{-\int^t_0 (\rho+ M^{1}_u) du} u(c_t, H^1_t)  dt  +e^{-\int^{\tau}_0 (\rho+ M^{1}_u) du}  \widehat{V}(X_{\tau},H^{1}_{\tau})    \bigg],
\end{align}
where $M^1$ is given by
\begin{align}\label{newm1}
	 M_t^{1}:=m^0+m^1\bigl(H_t^{1}\bigr)^{-\kappa},
\end{align}
   and \(H_t^{1}=h e^{-\delta t}\) is defined in \eqref{2-2}. In the sequel, whenever necessary, we write $X^x$ (similarly, $H^{1,h}, M^{1,h}, M^{2,h}$) to stress the {dependence} of the considered processes on their initial datum. 

Now, for any $(x,h)\in \mathcal{O}$ and Lagrange multiplier $z>0$, from the budget constraint (\ref{3-2-1}) and from (\ref{3-8-1}), letting $P^1_t(h):= \gamma_{t}e^{\int^t_0 (\rho+ M^{{1,h}}_u) du}$, we have 
\begin{align}\label{3-8}
 \mathbb{E}&_{x,h} \bigg[ \int_0^{\infty }  e^{-\int^t_0 (\rho+ M_u) du} u(c_t, H_t)  dt      \bigg] \nonumber \\ \leq &
{\sup_{(c,\pi,\tau) \in \mathcal{A}(x,h)}\bigg\{\mathbb{E}_{x,h}   \bigg[ \int_0^{\tau}  e^{-\int^t_0 (\rho+ M^{1}_u) du} u(c_t, H^{1}_t)  dt   +e^{-\int^{\tau}_0 (\rho+ M^{1}_u) du}  \widehat{V}( X_{\tau},H^{1}_{\tau})    \bigg]} \nonumber \\& { - z      \mathbb{E}_{x,h} \bigg[   \gamma_{\tau}X_{\tau}+ \int_0^{\tau} \gamma_{t}c_t dt                         \bigg]  +z x    \bigg\}  }                  \nonumber       \\
=&   \sup_{(c,\pi,\tau) \in \mathcal{A}(x,h)} \mathbb{E}_{x,h}\bigg[    \int_0^{\tau }  e^{-\int^t_0 (\rho+ M^{1}_u) du} \bigg(u(c_t, H^{1}_t) -zP^1_t(h)c_t\bigg) dt  \nonumber \\&+ e^{-\int^{\tau}_0 (\rho+ M^{1}_u) du} \widehat{V}(X_{\tau},H^{1}_{\tau}) - e^{-\int^{\tau}_0 (\rho+ M^{1}_u) du} zP^1_{\tau}(h)X_{\tau}      \bigg]+z x  \nonumber \\
=&   \sup_{(c,\pi,\tau) \in \mathcal{A}(x,h)} \mathbb{E}_{x,h}\bigg[    \int_0^{\tau }  e^{-\int^t_0 (\rho+ M^{1}_u) du} \bigg(u(c_t, H^{1}_t) -z P^1_t(h) c_t \bigg) dt  \nonumber \\&+ e^{-\int^{\tau}_0 (\rho+ M^{1}_u) du} \bigg( \widehat{V}(X_{\tau},H^{1}_{\tau}) - zP^1_{\tau}(h)X_{\tau}   +z P^1_{\tau}(h)\frac{I}{r}    -z P^1_{\tau}(h) \frac{I}{r}   \bigg)   \bigg]+zx\nonumber \\
\leq & \ \sup_{\tau \in \mathcal{S}} \mathbb{E}_{z,h}\bigg[    \int_0^{\tau }  e^{-\int^t_0 (\rho+ M^{1}_u) du} \widehat{u}(Z^1_t, H^{1}_t) dt
\nonumber\\&+ e^{-\int^{\tau}_0 (\rho+ M^{1}_u) du}\bigg(W( Z^{1}_{\tau}, H_{\tau}^{1})-Z^{1}_{\tau}\frac{I}{r} \bigg) \bigg]+ zx, 
\end{align}
where we recall that $\widehat{u}(z, h)= \sup_{c \geq 0}[u(c,h)-c z]$, and we have defined $Z^{1}_t :=z P^1_t(h)$ such that
\begin{align}\label{3-9}
d Z^{1}_t =  (\rho-r+ M_t^{{1}})Z^{1}_tdt - \theta Z^{1}_t dB_t,  \ t \in (0,\tau], \quad Z^{1}_0= z>0.
\end{align}

With a slight abuse of notation, we write \(\mathbb E_{z,h}\) for the
expectation under the law of the Markov process \((Z^1,H^1)\) starting from
\((z,h)\). Hence, defining the (candidate) dual value function
\begin{align}\label{3-10}
J(z,h):=\sup_{ \tau \in \mathcal{S}
} \mathbb{E}_{z,h}\bigg[    \int_0^{\tau }  e^{-\int^t_0 (\rho+ M^{1}_u) du} \widehat{u}(Z^{1}_t, H^{1}_t) dt \nonumber\\+ e^{-\int^{\tau}_0 (\rho+ M^{1}_u) du}\Big(W(Z^{1}_{\tau}, H^{1}_{\tau})-Z^{1}_{\tau}\frac{I}{r} \Big) \bigg],
\end{align}
we have an infinite-horizon, two-dimensional optimal stopping problem, with interconnected dynamics $(Z^1,H^1)$ as in (\ref{3-9}) and (\ref{2-2}). {{In \eqref{3-10} above, as well as throughout the rest of this paper, we set 
\begin{equation*}
\label{eq:convention}
   e^{-\int^{\tau}_0 (\rho+ M^{1}_u) du}\Big(W(Z^{1}_{\tau}, H^{1}_{\tau})-Z^{1}_{\tau}\frac{I}{r} \Big) = \limsup_{t\uparrow \infty} \bigg\{ e^{-\int^{t}_0 (\rho+ M^{1}_u) du}\Big(W(Z^{1}_{t}, H^{1}_{t})-Z^{1}_{t}\frac{I}{r} \Big) \bigg\}
\end{equation*}
}
on $\{\tau=+\infty\}$.} Again, when necessary, we shall write $Z^{1,z}$ to stress the dependence of the solution to \((\ref{3-9})\) on its initial value. {We shall close the duality gap and show that $J$ is indeed the dual value function in Section \ref{sec5}  (see Theorem \ref{dualityrelation2} below).} In Section \ref{sec4}, we perform a study of (\ref{3-10}).

\section{Study of the (candidate) dual optimal stopping problem}\label{sec4}
\subsection{Preliminary properties of the value function}
To study the optimal stopping problem (\ref{3-10}), we find it convenient to introduce the function 
\begin{align}\label{3-14-1}
\widehat{J}(z,h):=J(z,h)-\widehat{W}(z,h)
\end{align}
with 
\begin{align}\label{3-14}
\widehat{W}(z,h):=W(z,h)-z\frac{I}{r}.
\end{align}
 Applying It\^o's formula to $\{ e^{-\int^{t}_0 (\rho+ M^{1}_u) du}[W(Z^{1}_{t}, H^{1}_{t})-Z^{1}_{t}\frac{I}{r} ], t \in[0,\tau \wedge T]\}$, taking conditional expectations, and letting $T\uparrow \infty$ (upon using the power representation of $W$ obtained in the proof of Proposition \ref{Wregularity} and Assumption \ref{assume1}) we have 
\begin{align*}
&\mathbb{E}_{z,h}\Big[e^{-\int^{\tau}_0 (\rho+ M^{{1}}_t) dt}\Big(W( Z^{1}_{\tau}, H^{1}_{\tau})-Z^{1}_{\tau}\frac{I}{r} \Big)\Big]= W(z,h)-z\frac{I}{r}\\
& + \mathbb{E}_{z,h}\bigg[ \int_{0}^{\tau} e^{-\int^{t}_0 (\rho+ M^{1}_u) du}\big(\mathcal{L}\widehat{W}\big)( Z^{1}_t,H^{1}_t) dt \bigg ],
\end{align*}
where, for any $F \in C^{2,1}(\mathcal{O})$, the second order differential operator $\mathcal{L}$ is such that
\begin{align}\label{defineoprator}
\mathcal{L}F:=\frac{1}{2}\theta^2 z^2F_{zz}+(\rho-r+m^0+m^1h^{-\kappa})zF_z  -\delta hF_h-(\rho+m^0+m^1h^{-\kappa})F.
\end{align}

Combining (\ref{3-10}), (\ref{3-14-1}) and (\ref{3-14}), we have 
 \begin{align}\label{4-2}
\widehat{J}(z,h) =&\sup_{ \tau \in \mathcal S
} \mathbb{E}_{z,h}\bigg[    \int_0^{\tau }  e^{-\int^t_0 (\rho+ M^{1}_u) du} \widehat{u}(Z^{1}_t, H^{1}_t) dt  \nonumber \\&+ \int_{0}^{\tau} e^{-\int^{t}_0 (\rho+ M^{1}_u) du}\big(\mathcal{L}\widehat{W}\big)( Z^{1}_t,H^{1}_t)  dt  \bigg ] \nonumber \\
=&\sup_{ \tau \in \mathcal S } \mathbb{E}_{z,h}\bigg[ \int_0^{\tau }  z  \gamma_{t}{I\,dt}- \int_0^{\tau }  e^{-\int^t_0 (\rho+ M^{1}_u) du} KW_h(Z^{1}_t,H^{1}_t)dt  \bigg],\nonumber \\
=&\sup_{ \tau \in \mathcal S } \mathbb{E}_{z,h}\bigg[    \int_0^{\tau }  e^{-\int^t_0 (\rho+ M^{1}_u) du} \bigg(Z^{1}_t I-KW_h(Z^{1}_t,H^{1}_t)\bigg)dt  \bigg] 
\end{align}
where we have used the fact that (cf.\ (\ref{3-6}))
\begin{align*}
\big(\mathcal{L}\widehat{W}\big)(z,h)&= (\mathcal{L}W)( z,h) -\mathcal{L}(z \frac{I}{r})=(\widehat{\mathcal{L}}W)(z,h)-KW_h( z,h)-\mathcal{L}(z \frac{I}{r})\\
&=-\widehat{u}(z,h)-KW_h( z,h)+I z,
\end{align*}
with $\mathcal{L}F :=\widehat{\mathcal{L}}F-KF_h$ (cf. (\ref{3-7})), for any $F \in C^{2,1}(\mathcal{O})$.

As usual in optimal stopping theory, we let
\begin{align}\label{4-1-3}
\mathcal{W}:=\{ (z,h) \in \mathcal{O}: \widehat{J}( z,h)>0  \},\quad
\mathcal{I}:=\{ (z,h) \in \mathcal{O}: \widehat{J}( z,h)=0  \}
\end{align}
be the so-called continuation (waiting) and stopping (investing) regions, respectively. We denote by $\partial \mathcal{W}$ the boundary of the set $\mathcal{W}.$ Since, for any stopping time $\tau$, the mapping
$(z,h)\mapsto
\mathbb E_{z,h}[\int_0^\tau e^{-\int_0^t(\rho+M^1_u)du}
\left(Z^1_tI-KW_h(Z^1_t,H^1_t)\right)dt]$
is continuous, $\widehat J$ is lower semicontinuous on $\mathcal O$.
Hence, $\mathcal W$ is open. As will follow from Proposition \ref{contJhat} below, $\widehat J$ is continuous.
Therefore the stopping region $\mathcal I$ is closed. We then introduce the stopping time 
\begin{align}\label{optimalstopping}
\tau^*(z,h) := \inf\{t \geq 0 :(Z^1_t,H^1_t) \in \mathcal{I}\}, \quad \mathbb{P}_{z,h}\text{-a.s.},
\end{align}
with $\inf \emptyset = + \infty$, one has that $\tau^*(z,h)$ is optimal for $\widehat{J}(z,h)$ (see, e.g., Corollary I.2.9 in \cite{peskir2006optimal}).

\begin{pro}\label{finite}
The function $\widehat{J}$ is such that $0\leq \widehat{J}(z,h)\leq \frac{Iz}{r}$ for all $(z,h) \in \mathcal{O}$.
\end{pro}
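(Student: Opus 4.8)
The plan is to read the lower bound directly off the optimal-stopping representation and to obtain the upper bound by discarding a favourable term, enlarging the stopping interval to $[t,T]$, and evaluating a deterministic integral. For the lower bound, note that in (\ref{4-2}) — equivalently, in the last line of (\ref{3-15}) — the stopping time $\tau\equiv 0$ is admissible, and for it both integrals vanish, so $\widehat{J}(t,z,h)\geq 0$; this nonnegativity is in any case already implicit in the definitions (\ref{4-1-3}).

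For the upper bound I would start from (\ref{3-15}),
\begin{align*}
\widehat{J}(t,z,h)=\sup_{t\leq\tau\leq T}\mathbb{E}_{t,z,h}\Big[\int_t^{\tau}z\,\gamma_{s,t}\,I\,ds-\int_t^{\tau}e^{-\int_t^s(\rho+M^{H^1}_u)\,du}f(I)\,W_h(s,Z^1_s,H^1_s)\,ds\Big],
\end{align*}
and use that $W_h\geq 0$ on $[0,T)\times\mathbb{R}_+^2$ (argued below). Since $f(I)>0$, the second integrand is then nonpositive and may be dropped, while $z\,\gamma_{s,t}\,I\geq 0$ pathwise, so $\int_t^{\tau}z\gamma_{s,t}I\,ds\leq\int_t^{T}z\gamma_{s,t}I\,ds$ for every $\tau\in\mathcal{S}$. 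By Tonelli's theorem and the fact that $s\mapsto e^{-\theta(B_s-B_t)-\frac{1}{2}\theta^2(s-t)}$ is a martingale with unit mean, this yields
\begin{align*}
\widehat{J}(t,z,h)\leq zI\int_t^{T}\mathbb{E}_{t,z,h}[\gamma_{s,t}]\,ds=zI\int_t^{T}e^{-r(s-t)}\,ds=\frac{Iz}{r}\bigl(1-e^{-r(T-t)}\bigr).
\end{align*}

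It then remains to justify $W_h\geq 0$; if this is not already available from the proof of Proposition \ref{Wregularity}, I would obtain it by a comparison argument on the representation $W(t,z,h)=\mathbb{E}_{t,z,h}\bigl[\int_t^T e^{-\int_t^s(\rho+M^{H^2}_u)\,du}\,\widehat{u}(Z^2_s,H^2_s)\,ds\bigr]$. For $0<h_1\leq h_2$, the explicit solution of the linear ODE (\ref{2-3}) gives $H^{2,h_1}_s\leq H^{2,h_2}_s$ for all $s$, hence $M^{H^{2,h_1}}_s\geq M^{H^{2,h_2}}_s$; from the explicit form $Z^{2,h}_s=z\exp\bigl(\int_t^s(\rho-r+M^{H^{2,h}}_u)\,du-\theta(B_s-B_t)-\frac{1}{2}\theta^2(s-t)\bigr)$ this forces $Z^{2,h_1}_s\geq Z^{2,h_2}_s$ pathwise, while the discount factors are ordered the opposite way. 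As $\widehat{u}$ is nonnegative, nonincreasing in its first and nondecreasing in its second argument (immediate from $\widehat{u}(z,h)=\sup_{c\geq 0}[c^\alpha h^{1-\alpha}-cz]$), each integrand term for $h_1$ is dominated by the corresponding one for $h_2$; taking expectations gives $W(t,z,h_1)\leq W(t,z,h_2)$, so $W_h\geq 0$ by the $C^{1,2,1}$-regularity of $W$. The only point here requiring some care is precisely this monotonicity — the pathwise coupling of $(Z^2,H^2)$ across initial health levels and the induced ordering of discount factors and of $\widehat{u}$-values — everything else being the elementary manipulations above.
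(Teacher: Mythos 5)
Your proposal is correct and follows essentially the same route as the paper: nonnegativity from the admissibility of immediate stopping, and the upper bound by discarding the $-f(I)W_h$ term (using $W_h\geq 0$) and enlarging the integral to the whole horizon, then computing $\mathbb{E}[\gamma_{s,t}]=e^{-r(s-t)}$. The only difference is cosmetic: the paper obtains $W_h\geq 0$ directly from the explicit formula for $W_h$ in Lemma \ref{whbound} rather than via your (also valid) coupling/comparison argument.
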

\begin{proof}
The proof is given in Appendix \ref{profinite}.

\end{proof}



Since $W_h(\cdot,h)$ is strictly decreasing (cf.\ (\ref{b3}) in Lemma \ref{whbound} in the Appendix \ref{seca}), the next monotonicity of $\widehat{J}$ follows.
\begin{pro}\label{monotonicity}
$z \mapsto \widehat{J}(z,h) $ is non-decreasing for all $h  \in \mathbb{R}_+.$
\end{pro}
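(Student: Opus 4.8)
The plan is to read off the monotonicity directly from the representation (\ref{4-2}) of $\widehat{J}$, which is the main payoff of the It\^o manipulation carried out just before the statement: in (\ref{4-2}) the dependence on the initial datum $z$ enters only through the state process $Z^1$ and, transparently, through the integrand $Z^1_sI-f(I)W_h(t+s,Z^1_s,H^1_s)$. (Arguing directly from (\ref{3-10}) would be less convenient, since there the monotonicity in $z$ of the terminal term $W(\tau,Z^1_\tau,H^1_\tau)-Z^1_\tau g(\tau)$ is not evident.) The two ingredients I would use are: \emph{(a)} the scaling identity from (\ref{4-1-2}), $Z^{1,z}_s=z\,\gamma_{s,0}e^{\int_0^s(\rho+M^{H^1}_u)\,du}$, where the factor multiplying $z$ is a functional of the Brownian path and of $h$ only (and, likewise, $H^1_s=he^{-\delta s}$ and the discount factor $e^{-\int_0^s(\rho+M^{H^1}_u)\,du}$ do not depend on $z$); and \emph{(b)} the strict monotonicity ``$z\mapsto W_h(t,z,h)$ decreasing'', already recorded in (\ref{b3}) of Lemma \ref{whbound}.

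Concretely, I would fix $(t,h)$ and $0<z_1\le z_2$ and, using the explicit form of $Z^{1,z}$, couple the two dual processes on the same probability space carrying $B$, so that $Z^{1,z_1}_s\le Z^{1,z_2}_s$ for all $s\in[0,T-t]$, $\mathbb{P}$-a.s. Since $I>0$, $I\,Z^{1,z_1}_s\le I\,Z^{1,z_2}_s$; since $f(I)>0$ and $W_h(t+s,\cdot,h)$ is decreasing, $-f(I)W_h(t+s,Z^{1,z_1}_s,H^1_s)\le -f(I)W_h(t+s,Z^{1,z_2}_s,H^1_s)$; and multiplying by the common nonnegative discount factor preserves both inequalities. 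Hence, for every $s$ and $\mathbb{P}$-a.s., the integrand in (\ref{4-2}) at $z_1$ is dominated by the one at $z_2$. Integrating over $[0,\tau]$ and taking expectations then gives, for every stopping time $\tau\in[0,T-t]$ (a class independent of $z$),
\begin{align*}
&\mathbb{E}_{z_1,h}\Big[\textstyle\int_0^{\tau} e^{-\int_0^s(\rho+M^{H^1}_u)du}\big(Z^{1}_sI-f(I)W_h(t+s,Z^1_s,H^1_s)\big)ds\Big]\\
&\qquad\le\ \mathbb{E}_{z_2,h}\Big[\textstyle\int_0^{\tau} e^{-\int_0^s(\rho+M^{H^1}_u)du}\big(Z^{1}_sI-f(I)W_h(t+s,Z^1_s,H^1_s)\big)ds\Big],
\end{align*}
and taking the supremum over $\tau$ on both sides (which preserves the inequality) yields $\widehat{J}(t,z_1,h)\le\widehat{J}(t,z_2,h)$, as claimed. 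Finiteness of all the expectations involved follows from Proposition \ref{finite} together with the estimates in Appendix A.

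I do not expect any genuine obstacle here: the argument is a pathwise comparison followed by monotonicity of the integral, of the expectation and of the supremum. The only non-routine input is the strict decrease of $z\mapsto W_h(t,z,h)$, but this has been isolated beforehand in Lemma \ref{whbound}; once it is granted the conclusion is immediate.
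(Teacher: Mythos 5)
Your argument is correct and is exactly the one the paper intends: the paper gives no separate proof, simply noting before the statement that the monotonicity follows from the representation (\ref{4-2}) together with the fact that $z\mapsto W_h(t,z,h)$ is decreasing (Lemma \ref{whbound}), which is precisely your pathwise comparison of the integrands followed by taking expectations and the supremum over stopping times. No gaps.
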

On the other hand, we notice that it is not clear whether $h \to \widehat{J}(z,h)$ is monotonic or not, since it is not clear if $h \to   e^{-\int^t_0 (\rho+ M^{1}_u) du} KW_h(Z^{1}_t,H^{1}_t)$ in (\ref{4-2}) is monotonic. 

The following preliminary regularity property will be useful in the subsequent analysis of the free boundary.

\begin{pro}\label{contJhat}
{The function $\widehat{J}$ in \eqref{4-2} is continuous on $\mathcal{O}$. Consequently, by \eqref{3-14-1} and Proposition \ref{Wregularity},
the function \(J\) is continuous on \(\mathcal O\).}
\end{pro}

\begin{proof}
{The proof is given in Appendix \ref{procontJhat}.}
\end{proof}

We conclude with asymptotic limits of $\widehat{J}$.
\begin{pro}\label{limit}
$\lim_{z \to 0}\widehat{J}(z,h)=0$, $\lim_{z \to \infty}\widehat{J}( z,h)=\infty$ for all $h \in  \mathbb{R}_+.$ 
\end{pro}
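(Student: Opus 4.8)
The first limit is immediate from Proposition~\ref{finite}: since $0\le \widehat{J}(t,z,h)\le \frac{Iz}{r}(1-e^{-r(T-t)})$ and the right-hand side vanishes as $z\downarrow 0$, a squeeze argument gives $\lim_{z\to 0}\widehat{J}(t,z,h)=0$. So the only real work is the divergence as $z\to\infty$.

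For that, fix $t\in[0,T)$ and $h>0$ and bound $\widehat{J}$ from below by evaluating the functional in (\ref{4-2}) at the suboptimal, deterministic choice $\tau\equiv T-t$:
\begin{align*}
\widehat{J}(t,z,h)\ \ge\ & \mathbb{E}_{z,h}\Big[\int_0^{T-t} z\,e^{-rs-\theta B_s-\frac{1}{2}\theta^2 s}\,I\,ds\Big]\\
&-f(I)\,\mathbb{E}_{z,h}\Big[\int_0^{T-t} e^{-\int_0^s(\rho+M^{H^{1}}_u)\,du}\,W_h(t+s,Z^{1}_s,H^{1}_s)\,ds\Big].
\end{align*}
By Tonelli's theorem and the fact that $s\mapsto e^{-\theta B_s-\frac{1}{2}\theta^2 s}$ is a mean-one martingale, the first term equals exactly $\frac{Iz}{r}(1-e^{-r(T-t)})$, which tends to $+\infty$ as $z\to\infty$ (here $t<T$ is used). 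It therefore remains to check that the second expectation stays bounded — in fact, that it vanishes — as $z\to\infty$.

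For this I would invoke the estimates on $W_h$ from Lemma~\ref{whbound}: $W_h\ge 0$, the map $z\mapsto W_h(t,z,h)$ is non-increasing, and $W_h(t,z,h)\to 0$ as $z\to\infty$ (the last point also following from $W(t,\cdot,h)\to 0$, which in turn follows from $\widehat{u}(\cdot,h)\to 0$ and dominated convergence). Since $Z^{1}_s=z\,P^1_s(h)$ with $P^1_s(h)>0$, for $z\ge 1$ we have $0\le W_h(t+s,Z^{1}_s,H^{1}_s)\le W_h(t+s,P^1_s(h),H^{1}_s)$, and the right-hand side has finite $\mathbb{E}_{z,h}$-expectation over $s\in[0,T-t]$ against the discount factor (using the blow-up rate of $W_h$ near the origin from Lemma~\ref{whbound} together with the finiteness of negative lognormal moments of $P^1_s(h)$). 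Hence the second expectation is dominated uniformly in $z\ge 1$, and since $Z^{1}_s\to\infty$ $\mathbb{P}_{z,h}$-a.s.\ as $z\to\infty$, dominated convergence shows it tends to $0$. Combining with the previous paragraph gives $\lim_{z\to\infty}\widehat{J}(t,z,h)=+\infty$; incidentally one even obtains $\widehat{J}(t,z,h)/z\to \frac{I}{r}(1-e^{-r(T-t)})$, consistently with Proposition~\ref{finite}.

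Apart from this, the argument is elementary. The one point requiring care — and the step I expect to be the main obstacle — is the control of the $W_h$-term for large $z$, which rests entirely on the qualitative properties of $W_h$ (sign, monotonicity, decay in $z$, and an integrable blow-up as $z\to 0$) furnished by Lemma~\ref{whbound}.
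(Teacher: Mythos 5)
Your proposal is correct and follows essentially the same route as the paper: the $z\to 0$ limit via the squeeze from Proposition \ref{finite}, and the $z\to\infty$ limit by bounding $\widehat{J}$ below with the suboptimal choice $\tau\equiv T-t$, computing the first term as $\frac{Iz}{r}(1-e^{-r(T-t)})$, and killing the $W_h$-term using Lemma \ref{whbound}. Your write-up is in fact more detailed than the paper's (which leaves the domination step implicit), and the extra care you take there — using $W_h(t,z,h)=z^{\alpha/(\alpha-1)}\Gamma(t,h)$ to get an integrable majorant uniform in $z\ge 1$ — is exactly the right justification.
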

\begin{proof}
The proof is given in Appendix \ref{prolimit}.
\end{proof}

\subsection{Properties of the free boundary}

In this section, we show that the boundary $\partial \mathcal{W}$ can be represented by a function $b:  \mathbb{R}_+ \to [0,\infty]$. 

First, we provide the shape of the continuation and stopping regions.  Defining $\Gamma: \mathbb{R}_+ 	\to \mathbb{R}_+$ such that 
\begin{align}\label{a11}
&\Gamma(h):=(1-\alpha)\alpha^{\frac{\alpha}{1-\alpha}}\bigg[    \int_0^{\infty }  e^{ \frac{\alpha}{1-\alpha}(rs+\frac{1}{2}\theta^2s) + \frac{\theta^2 \alpha^2s}{2(\alpha-1)^2}}e^{\int^s_0\frac{\rho+M^{{2,h}}_u}{\alpha-1}du} \Big[\frac{m^1\kappa}{1-\alpha} \nonumber\\& \times\Big(\int^s_0  \Big(h e^{-\delta u }+\frac{K}{\delta}(1-e^{-\delta u})\Big)^{-\kappa-1}e^{-\delta u}du\Big) \Big(h e^{-\delta s }+\frac{K}{\delta}(1-e^{-\delta s})\Big)+ e^{-\delta s}\Big] ds  \bigg],
\end{align}
for all $h \in  \mathbb{R}_+$, one has that $\Gamma(h)$ is finite due to Assumption \ref{assume1}.

\begin{lemma}\label{b}
There exists a free boundary 
\(b:\mathbb{R}_{+}\to [0,\infty]\)
such that 
\begin{align*}
	 \mathcal{I}
    =
    \{(z,h)\in\mathcal{O}:0<z\leq b(h)\}.
\end{align*}
   Moreover, setting
$    g(h):=(\frac{I}{K})^{\alpha-1}\Gamma(h)^{1-\alpha},
$
one has \(b(h)\leq g(h)\) for all \(h\in\mathbb R_+\), where
\(\Gamma(h)\) is defined in \eqref{a11}. {In addition, \(b\) is
upper-semicontinuous on \(\mathbb R_+\). }
\end{lemma}

\begin{proof}
Since $z \mapsto \widehat{J}(z,h)$ is nondecreasing by Proposition \ref{monotonicity}, we can define $b(h):= \sup\{z>0:\widehat{J}(z,h)\leq 0\}$ (with the convention $\sup \emptyset =0$), so that $\mathcal{I}=\{ (z,h) \in \mathcal{O} : 0<z \leq b(h)  \}$. 

Next we show $b(h)\leq g(h) $. Noticing that, due to (\ref{4-2}), 
\begin{align*}
\mathcal{R}:=\{(z,h) \in \mathcal{O} : z I-KW_h(z,h) >0         \}  \subseteq  \mathcal{W},
\end{align*}
we have $\mathcal R^C \supseteq \mathcal I $.

Recalling $W_h(z,h)$ as in (\ref{b3}), and using (\ref{a11}), we then write $W_h(z,h)= z^{\frac{\alpha}{\alpha-1}}\Gamma(h),$ so that $(z,h) \in \mathcal R^C \Longleftrightarrow z I-KW_h(z,h) \leq 0 \Longleftrightarrow z^{\frac{1}{\alpha-1}}K\Gamma(h) \geq I $ for all $h \in  \mathbb{R}_+$. But then, since $0<\alpha<1$, we have  
\begin{align*}
(z,h) \in \mathcal{R}^C \Longleftrightarrow  z \leq \bigg(\frac{I}{K\Gamma(h)}\bigg)^{\alpha-1}=\bigg(\frac{I}{K}\bigg)^{\alpha-1}\Gamma(h)^{1-\alpha},
\end{align*}
and because $\mathcal{I}=\{ (z,h) \in \mathcal{O}: 0<z\leq b(h)  \}$, we find 
\begin{align*}
	b(h)\leq g(h)=\bigg(\frac{I}{K}\bigg)^{\alpha-1}\Gamma(h)^{1-\alpha}.
\end{align*}

{Finally, since \(\widehat J\) is continuous on \(\mathcal O\) by
{Proposition \ref{contJhat}}, the stopping region \(\mathcal I\) is closed. Together
with the representation $\mathcal I=\{(z,h)\in\mathcal O:0<z\le b(h)\},$
this implies that \(b\) is upper-semicontinuous.} 

\end{proof}

\subsection{Characterization of the value function}

In order to characterize the optimal stopping boundary, we now reformulate the dual optimal stopping problem with value function $\widehat{J}$ and state variable $(z,h)$ by introducing an equivalent optimal stopping problem with value function $\widetilde{J}$ and state variable $(t,z)$.

Fix $h \in \mathbb R_+$. Throughout this subsection, the dependence of \(\widetilde J\),
\(\widetilde h\), \(\widetilde m\), and \(\widetilde b\) defined below on the fixed initial
health level \(h>0\) is suppressed for notational convenience.
 From (\ref{2-2}) and \eqref{newm1}  we have $H^1_t=he^{-\delta t}:=\widetilde{h}(t)$ and $M^{1}_t=m^0+m^1h^{-\kappa}e^{\delta \kappa t}:= \widetilde{m}(t)$ for any $t \geq 0$. Then, for \((t,z)\in[0,\infty)\times\mathbb R_+\), define
\begin{align}\label{widetJ}
\widetilde J(t,z)
:=
\sup_{\tau\in\mathcal S}
\mathbb E_z\left[
\int_0^\tau
e^{-\int_0^s(\rho+\widetilde m(t+u))\,du}
\left(
I Z_s^1-KW_h(Z_s^1,\widetilde h(t+s))
\right)ds
\right],
\end{align}
where \(Z^1\) solves (cf.\ \eqref{3-9})
\begin{align}\label{strong}
dZ_s^1
=
\big[\rho-r+\widetilde m(t+s)\big]Z_s^1\,ds
-\theta Z_s^1\,dB_s,
\qquad
Z_0^1=z>0.
\end{align}
It follows from \eqref{4-2} that
\begin{align}\label{relation}
\widetilde J(t,z)=\widehat J(z,\widetilde h(t)).
\end{align}

Also, for \(F\in C^{1,2}([0,\infty)\times\mathbb R_+)\), define the second-order differential operator \(\widetilde{\mathcal L}\) by
\begin{align*}
\widetilde{\mathcal L}F
:=
F_t
+\frac{1}{2}\theta^2z^2F_{zz}
+\big[\rho-r+\widetilde m(t)\big]zF_z
-\big(\rho+\widetilde m(t)\big)F.
\end{align*}
Moreover, setting $\widetilde{\mathcal{W}}:=\{(t,z)\in [0,\infty) \times \mathbb{R}_+: \widetilde{J}(t,z)>0 \}$, one has that $(z,\widetilde{h}(t))\in \mathcal{W} \Longleftrightarrow (t,z) \in \widetilde{\mathcal{W} }$. Analogously, set $\widetilde{\mathcal{I}}:=\{(t,z)\in [0,\infty) \times \mathbb{R}_+: \widetilde{J}(t,z)=0 \}$. The optimal stopping time for \(\widetilde J\) is therefore (cf.\ \eqref{optimalstopping})
{\[
\widetilde\tau^*(t,z)
:=
\inf\{s\geq0:(t+s,Z_s^1)\in\widetilde{\mathcal I}\},
\qquad \mathbb P_z\text{-a.s.}
\]}
Consequently,
$
\tau^*(z,\widetilde h(t))
=
\widetilde\tau^*(t,z),
 \mathbb P_z\text{-a.s.}
$
We also define the corresponding time-dependent boundary by
\begin{align}\label{relationb}
	\widetilde b(t):=b(\widetilde h(t)),
\qquad t\geq0.\end{align}
Notice that $\widetilde b(0)=b(h)$.

\begin{pro}\label{prop:Jtilde-cont}
{The function \(\widetilde J\) is continuous on
\([0,\infty)\times \mathbb R_+\).}
\end{pro}

\begin{proof}

{The claim follows immediately from the identity
\(\widetilde J(t,z)=\widehat J(z,\widetilde h(t))\) (see \eqref{relation}), the continuity of
\(\widetilde h\), and Proposition~\ref{contJhat}.}
\end{proof}

{{The next proposition follows from classical interior regularity results
for parabolic PDEs
(cf.\ Corollary~2.4.3 of \cite{krylov2008lectures}), as commonly used
in the optimal stopping literature
(see, e.g., Theorem~2.7.7 of \cite{karatzas1998methods}).

\begin{pro}\label{regularity1}
The function $\widetilde J$ belongs to
$
    C^{1,2}\bigl(
        \widetilde{\mathcal W}
        \cap
        ((0,\infty)\times\mathbb R_+)
    \bigr)
$
and satisfies
\begin{align}\label{classcial}
\begin{cases}
(\widetilde{\mathcal L}\widetilde J)(t,z)
=
-Iz+KW_h(z,\widetilde h(t)),
&
(t,z)\in
\widetilde{\mathcal W}
\cap
((0,\infty)\times\mathbb R_+),
\\[0.4em]
\widetilde J(t,z)=0,
&
(t,z)\in\widetilde{\mathcal I}.
\end{cases}
\end{align}
\end{pro}

For the regularity argument below, fix $h>0$ and extend the
time-inhomogeneous quantities to all $t\in\mathbb R$ by setting
$$
    \widetilde h(t):=he^{-\delta t},
    \qquad
    \widetilde m(t):=
    m^0+m^1h^{-\kappa}e^{\delta\kappa t}.
$$
We also extend $\widetilde J$ and $\widetilde b$ by defining
$$
    \widetilde J(t,z)
    :=
    \widehat J(z,\widetilde h(t)),
    \qquad
    \widetilde b(t)
    :=
    b(\widetilde h(t)),
    \qquad
    (t,z)\in\mathbb R\times\mathbb R_+.
$$
For $t\geq0$, these definitions agree with the original ones.

We denote the corresponding extended continuation and stopping
regions by
$$
    \widetilde{\mathcal W}^{\mathrm{ext}}
    :=
    \bigl\{
        (t,z)\in\mathbb R\times\mathbb R_+:
        \widetilde J(t,z)>0
    \bigr\},
$$
and
$$
    \widetilde{\mathcal I}^{\mathrm{ext}}
    :=
    \bigl\{
        (t,z)\in\mathbb R\times\mathbb R_+:
        \widetilde J(t,z)=0
    \bigr\}.
$$

We use the notation $W^{1,2}_p(Q)$ for the parabolic Sobolev space
of functions $u$ such that
$$
    u,\quad u_t,\quad u_z,\quad u_{zz}\in L^p(Q),
$$
where all derivatives are understood in the weak sense.

\begin{pro}\label{prop:sobolev-regularity}
For every $p\in(1,\infty)$,
$
    \widetilde J
    \in
    W^{1,2}_{p,\mathrm{loc}}
    \bigl(\mathbb R\times\mathbb R_+\bigr).
$
Moreover, $\widetilde J
    \in C^{1,2}\bigl(\widetilde{\mathcal W}^{\mathrm{ext}}\bigr)$.
For every $p>3$, the weak spatial derivative $\widetilde J_z$
admits a locally H\"older-continuous version on
$\mathbb R\times\mathbb R_+$. In particular, for every fixed
$t\in\mathbb R$, the map
$z\mapsto\widetilde J(t,z)$
belongs to $C^1(\mathbb R_+)$.

In addition, the identity
\begin{align}
    (\widetilde{\mathcal L}\widetilde J)(t,z)
    &=
    \bigl[
        -Iz+KW_h(z,\widetilde h(t))
    \bigr]
    \mathbf 1_{\{z>\widetilde b(t)\}}
    \label{eq:weak-generator-identity}
\end{align}
holds for Lebesgue-a.e.\
$(t,z)\in\mathbb R\times\mathbb R_+$. Consequently,
$
    \widetilde J_z(t,\widetilde b(t))=0
$
whenever $0<\widetilde b(t)<\infty$.
\end{pro}

\begin{proof}
We first verify that the preceding extension has the desired
probabilistic interpretation. If
$$
    h_t:=\widetilde h(t)=he^{-\delta t},
$$
then, for every $s\geq0$,
$$
    \widetilde h(t+s)=h_te^{-\delta s},
    \qquad
    \widetilde m(t+s)
    =
    m^0+m^1h_t^{-\kappa}e^{\delta\kappa s}.
$$
Hence the stopping problem starting at time $t$ coincides with the
original time-homogeneous problem having initial health level $h_t$.
The same reformulation as in \eqref{relation} therefore gives
$$
    \widetilde J(t,z)
    =
    \widehat J(z,h_t)
    =
    \widehat J(z,\widetilde h(t)),
    \qquad
    (t,z)\in\mathbb R\times\mathbb R_+.
$$
In particular, Proposition~\ref{contJhat} and the continuity of
$\widetilde h$ imply that the extended function $\widetilde J$ is
continuous on $\mathbb R\times\mathbb R_+$. The extension agrees with
the original value function on
$[0,\infty)\times\mathbb R_+$ and makes $t=0$ an interior time point.

By the structure of the stopping region obtained in Lemma~\ref{b},
$$
    \widetilde{\mathcal W}^{\mathrm{ext}}
    =
    \bigl\{
        (t,z)\in\mathbb R\times\mathbb R_+:
        z>\widetilde b(t)
    \bigr\},
$$
and
$$
    \widetilde{\mathcal I}^{\mathrm{ext}}
    =
    \bigl\{
        (t,z)\in\mathbb R\times\mathbb R_+:
        0<z\leq\widetilde b(t)
    \bigr\}.
$$

Set
$$
    \ell(t,z)
    :=
    Iz-KW_h(z,\widetilde h(t)),
    \qquad
    (t,z)\in\mathbb R\times\mathbb R_+.
$$
Fix a bounded parabolic cylinder
$$
    Q:=(t_0,t_1)\times(a,b)
    \Subset
    \mathbb R\times\mathbb R_+,
    \qquad
    0<a<b<\infty,
$$
and denote its parabolic boundary by
$$
    \partial_pQ
    :=
    \bigl(\{t_1\}\times[a,b]\bigr)
    \cup
    \bigl([t_0,t_1]\times\{a,b\}\bigr).
$$

Consider the parabolic obstacle problem on $Q$ with zero obstacle,
source term $\ell$, and boundary datum $\widetilde J$. Its strong
complementarity form, which is justified a posteriori by the
regularity established below, is
\begin{equation}\label{eq:localized-VI}
\left\{
\begin{aligned}
    &u\geq0,
        &&\text{a.e. in }Q,\\
    &\widetilde{\mathcal L}u+\ell\leq0,
        &&\text{a.e. in }Q,\\
    &u\bigl(\widetilde{\mathcal L}u+\ell\bigr)=0,
        &&\text{a.e. in }Q,\\
    &u=\widetilde J,
        &&\text{on }\partial_pQ.
\end{aligned}
\right.
\end{equation}
Equivalently, the first three relations in
\eqref{eq:localized-VI} take the form
$$
    \max\bigl\{
        \widetilde{\mathcal L}u+\ell,-u
    \bigr\}
    =0
    \qquad\text{a.e. in }Q.
$$

Since $Q$ is bounded away from $z=0$ and $\theta\neq0$, the
second-order coefficient
$\frac12\theta^2z^2$
is bounded and uniformly positive on $Q$. Moreover, all the
coefficients of $\widetilde{\mathcal L}$ are smooth and bounded on
$\overline Q$. By the explicit representation in
Proposition~\ref{Wregularity}, the function $\ell$ is also smooth and bounded on
$\overline Q$.

After the time reversal $s=t_1-t$, the localized problem becomes a
forward uniformly parabolic obstacle problem on
$(0,t_1-t_0)\times(a,b)$.
Classical variational and penalization theory yields a unique
continuous variational solution $u$. Moreover, the
maximum-principle estimate for the penalized equations, together
with the fact that the obstacle is identically zero and that $\ell$
is bounded on $\overline Q$, gives a bound on the penalization term
which is locally uniform in the penalization parameter. The interior
$L^p$-estimates for the corresponding linear uniformly parabolic
equations therefore yield, for every $p\in(1,\infty)$ and every
$Q'\Subset Q$,
\begin{equation}\label{eq:u-Sobolev}
    u\in W^{1,2}_p(Q').
\end{equation}

For the evolutionary optimal-stopping problem and its variational
formulation, including operators in non-divergence form, see
Chapter~3, Sections~4.2, 4.6, and~4.7 of
\cite{bensoussan2011applications}. For the penalization argument and
the resulting parabolic Sobolev regularity, see Chapter~1,
Section~8, in particular Theorem~8.2, of
\cite{friedman1026variational}. The argument is local and therefore
applies on every $Q'\Subset Q$; see also Theorem~8.4 therein for the
corresponding interior estimates.

Having established \eqref{eq:u-Sobolev}, we identify $u$ with the
optimal stopping value. Choose $p>3$ and apply the generalized
It\^o's formula for parabolic Sobolev functions, first on an increasing
sequence of cylinders compactly contained in $Q$ and then by
localization, to
$$
    \exp\left\{
        -\int_0^s
        \bigl(\rho+\widetilde m(t+v)\bigr)\,dv
    \right\}
    u(t+s,Z_s^1);
$$
see Lemma~8.1 and Theorem~8.5 in
\cite{bensoussan2011applications}.

Using the variational inequalities, the boundary condition on
$\partial_pQ$, and stopping upon first entry into the contact set
$\{u=0\}$ before the first exit time of $(t+s,Z_s^1)$ from $Q$, the
standard verification argument shows that $u$ coincides with the
corresponding localized optimal stopping value. By the dynamic
programming principle, this localized value agrees with
$\widetilde J$ on $Q$. Hence
$$
    u=\widetilde J
    \qquad\text{on }Q.
$$

Since $Q\Subset\mathbb R\times\mathbb R_+$ was arbitrary,
\eqref{eq:u-Sobolev} implies that, for every $p\in(1,\infty)$,
\begin{equation}\label{eq:J-Sobolev}
    \widetilde J
    \in
    W^{1,2}_{p,\mathrm{loc}}
    \bigl(\mathbb R\times\mathbb R_+\bigr).
\end{equation}

On the open set $\widetilde{\mathcal W}^{\mathrm{ext}}$, the obstacle
is inactive and $\widetilde J$ solves
$\widetilde{\mathcal L}\widetilde J+\ell=0$.
Since the coefficients of $\widetilde{\mathcal L}$ and the source
term $\ell$ are smooth, standard interior parabolic regularity yields
$\widetilde J
    \in
C^{1,2}\bigl(\widetilde{\mathcal W}^{\mathrm{ext}}\bigr)
$.
In particular, this regularity holds in a neighborhood of every
point
$(0,z)\in\widetilde{\mathcal W}^{\mathrm{ext}}$.

We next identify the weak generator of $\widetilde J$. On the
continuation region, the complementarity relations give
$$
    \widetilde{\mathcal L}\widetilde J
    =
    -\ell
    =
    -Iz+KW_h(z,\widetilde h(t))
    \qquad
    \text{a.e. on }
    \widetilde{\mathcal W}^{\mathrm{ext}}.
$$
On the other hand,
$$
    \widetilde J=0
    \qquad
    \text{on }
    \widetilde{\mathcal I}^{\mathrm{ext}}.
$$
Since
$\widetilde J\in W^{1,2}_{p,\mathrm{loc}}$, the standard level-set
property of Sobolev functions yields
$$
    \widetilde J_t=\widetilde J_z=0
    \qquad
    \text{a.e. on }
    \widetilde{\mathcal I}^{\mathrm{ext}}.
$$

Moreover, by \eqref{eq:J-Sobolev}, for almost every fixed $t$, the map
$z\mapsto\widetilde J(t,z)$ belongs to $W^{2,p}_{\mathrm{loc}}(\mathbb R_+)$ and is identically
zero on $(0,\widetilde b(t))$. It follows that
$$
    \widetilde J_{zz}(t,z)=0
    \qquad
    \text{for a.e. }z<\widetilde b(t).
$$
Since $\widetilde b$ is Borel measurable, its graph
$
    \bigl\{
        (t,z)\in\mathbb R\times\mathbb R_+:
        z=\widetilde b(t)
    \bigr\}
$
has zero two-dimensional Lebesgue measure. Consequently,
$$
    \widetilde{\mathcal L}\widetilde J=0
    \qquad
    \text{a.e. on }
    \widetilde{\mathcal I}^{\mathrm{ext}}.
$$
Combining the identities on the continuation and stopping regions,
we obtain
$$
    (\widetilde{\mathcal L}\widetilde J)(t,z)
    =
    \bigl[
        -Iz+KW_h(z,\widetilde h(t))
    \bigr]
    \mathbf 1_{\{z>\widetilde b(t)\}}
$$
for Lebesgue-a.e.\
$(t,z)\in\mathbb R\times\mathbb R_+$.

Finally, take $p>3$. Since the spatial dimension is one, the
parabolic Sobolev embedding theorem applied to
\eqref{eq:J-Sobolev} implies that the weak spatial derivative
$\widetilde J_z$ admits a locally H\"older-continuous version on
$\mathbb R\times\mathbb R_+$. In particular, for every fixed
$t\in\mathbb R$,
$z\mapsto\widetilde J(t,z)$ belongs to $C^1(\mathbb R_+)$. Since
$$
    \widetilde J(t,z)=0,
    \qquad
    0<z\leq\widetilde b(t),
$$
we have
$$
    \widetilde J_z(t,z)=0,
    \qquad
    0<z<\widetilde b(t).
$$
Continuity of $\widetilde J_z$ therefore gives
$
    \widetilde J_z(t,\widetilde b(t))=0
$
whenever $0<\widetilde b(t)<\infty$.
\end{proof}

We summarize the regularity and smooth-fit properties obtained above
in the following corollary. These properties will be used in the
derivation of the boundary integral equation.

\begin{corollary}\label{cor:smooth-fit}
Let $\widetilde J$ also denote the extension to
$\mathbb R\times\mathbb R_+$ introduced above. Then, for every
$p\in(1,\infty)$,
$
    \widetilde J
    \in
    W^{1,2}_{p,\mathrm{loc}}
    \bigl(\mathbb R\times\mathbb R_+\bigr).
$ On the original time domain, define
$$
    \widetilde{\mathcal W}
    :=
    \bigl\{
        (t,z)\in[0,\infty)\times\mathbb R_+:
        \widetilde J(t,z)>0
    \bigr\},
$$
and
$$
    \widetilde{\mathcal I}
    :=
    \bigl\{
        (t,z)\in[0,\infty)\times\mathbb R_+:
        \widetilde J(t,z)=0
    \bigr\}.
$$
The extension of $\widetilde J$ is $C^{1,2}$ in an open neighborhood
of every point of $\widetilde{\mathcal W}$; in this sense,
$
    \widetilde J\in C^{1,2}(\widetilde{\mathcal W}).
$
Furthermore, for every fixed $t\geq0$, the map
$z\mapsto\widetilde J(t,z)$
belongs to $C^1(\mathbb R_+)$, and
$$
\begin{cases}
(\widetilde{\mathcal L}\widetilde J)(t,z)
=
-Iz+KW_h(z,\widetilde h(t)),
&
(t,z)\in\widetilde{\mathcal W},
\\[0.4em]
\widetilde J(t,z)=0,
&
(t,z)\in\widetilde{\mathcal I},
\\[0.4em]
\widetilde J_z(t,\widetilde b(t))=0,
&
t\geq0,\quad 0<\widetilde b(t)<\infty.
\end{cases}
$$
In addition,
$$
    (\widetilde{\mathcal L}\widetilde J)(t,z)
    =
    \bigl[
        -Iz+KW_h(z,\widetilde h(t))
    \bigr]
    \mathbf 1_{\{z>\widetilde b(t)\}}
$$
for Lebesgue-a.e.\
$(t,z)\in[0,\infty)\times\mathbb R_+$.
\end{corollary}
}}

We are now in a position to derive a nonlinear integral equation that
characterizes the free boundary $\widetilde b$. The boundary $b(h), h>0$ can be {recovered} through $\widetilde{b}(0)$, as $b(h)=\widetilde{b}(0)$ (see \eqref{relationb}). As a byproduct, we obtain an integral
representation of the value function \(\widetilde J\). The result relies on
the regularity properties established above and on a weak version of Dynkin's
formula.

\begin{theorem}\label{thm:Jtilde-representation}
For every \((t,z)\in[0,\infty)\times \mathbb R_+ \), the value function
\(\widetilde J\) defined in \eqref{widetJ} admits the representation
\begin{equation}\label{eq:Jtilde-representation}
\widetilde J(t,z)
=
\mathbb E_z\!\left[
\int_0^\infty
e^{-\int_0^s(\rho+\widetilde m(t+u))\,du}
\Big(
IZ_s^1-KW_h(Z_s^1,\widetilde h(t+s))
\Big)
\mathds{1}_{\{Z_s^1\geq\widetilde b(t+s)\}}
\,ds
\right].
\end{equation}
{Consequently, for every \(t\geq0\) with
{\(0< \widetilde b(t)\leq g(\widetilde h(t))<\infty\)}, where \(g\) is given in
Lemma~\ref{b}, the free boundary satisfies
\begin{equation}\label{eq:boundary-integral-equation}
0
=
\mathbb E_{\widetilde b(t)}\!\left[
\int_0^\infty
e^{-\int_0^s(\rho+\widetilde m(t+u))\,du}
\Big(
IZ_s^1
-
KW_h(Z_s^1,\widetilde h(t+s))
\Big)
\mathds{1}_{\{Z_s^1\geq\widetilde b(t+s)\}}
\,ds
\right].
\end{equation}
}
\end{theorem}

\begin{proof}
We prove the representation \eqref{eq:Jtilde-representation}. Fix
\((t,z)\in[0,\infty)\times \mathbb R_+\), and let \(Z^1\) solve
\eqref{strong} with initial condition \(Z_0^1=z\). For \(T>0\) and \(n\ge1\),
set $\mathcal K_n:=[t,t+T]\times[1/n,n],$
and define
\[
    \tau_n
    :=
    \inf\{s\ge0:(t+s,Z_s^1)\notin \mathcal  K_n\}.
\]
For \(n\) sufficiently large, \((t,z)\in \mathcal K_n\).

{{
Fix $p>3$. By Proposition~\ref{prop:sobolev-regularity},
$\widetilde J\in W^{1,2}_{p,\mathrm{loc}}
(\mathbb R\times\mathbb R_+)$. Hence the generalized It\^o's formula
for parabolic Sobolev functions (see, for example, Lemma~8.1 and Theorem~8.5 in
\cite{bensoussan2011applications}, pp.~183-186) yields
\begin{align}
    \widetilde J(t,z)
    &=
    \mathbb E_z\bigg[
        e^{-\int_0^{\tau_n\wedge T}
        (\rho+\widetilde m(t+u))\,du}
        \widetilde J
        \bigl(
            t+\tau_n\wedge T,
            Z_{\tau_n\wedge T}^1
        \bigr)
    \bigg]
    \notag\\
    &\quad
    -
    \mathbb E_z\bigg[
        \int_0^{\tau_n\wedge T}
        e^{-\int_0^s(\rho+\widetilde m(t+u))\,du}
        (\widetilde{\mathcal L}\widetilde J)
        (t+s,Z_s^1)\,ds
    \bigg].
    \label{eq:sobolev-dynkin}
\end{align}

By \eqref{eq:weak-generator-identity},
\[
    (\widetilde{\mathcal L}\widetilde J)(v,\zeta)
    =
    \bigl[
        -I\zeta+KW_h(\zeta,\widetilde h(v))
    \bigr]
    \mathbf 1_{\{\zeta>\widetilde b(v)\}}
\]
for Lebesgue-a.e.\ $(v,\zeta)\in\mathbb R\times\mathbb R_+$.
Moreover, for every $s>0$, the random variable $Z_s^1$ admits a
density with respect to Lebesgue measure. Therefore, by Fubini's
theorem, the preceding almost-everywhere identity can be evaluated
along $(t+s,Z_s^1)$ for $\mathbb P_z\otimes ds$-a.e.\ $(\omega,s)$.
Substituting it into \eqref{eq:sobolev-dynkin} gives
\begin{align}
    \widetilde J(t,z)
    &=
    \mathbb E_z\bigg[
        e^{-\int_0^{\tau_n\wedge T}
        (\rho+\widetilde m(t+u))\,du}
        \widetilde J
        \bigl(
            t+\tau_n\wedge T,
            Z_{\tau_n\wedge T}^1
        \bigr)
    \bigg]
    \notag\\
    &\quad
    +
    \mathbb E_z\bigg[
        \int_0^{\tau_n\wedge T}
        e^{-\int_0^s(\rho+\widetilde m(t+u))\,du}
        \bigl(
            IZ_s^1
            -KW_h(Z_s^1,\widetilde h(t+s))
        \bigr)
        \mathbf 1_{\{Z_s^1 \geq \widetilde b(t+s)\}}
        \,ds
    \bigg].
    \label{eq:localized-representation}
\end{align}

In \eqref{eq:localized-representation} above, we have used that, since $Z_s^1$ has a density for every $s>0$ and
$\widetilde b$ is Borel measurable,
$\mathbb P_z
    \bigl(
        Z_s^1=\widetilde b(t+s)
    \bigr)=0,
 s>0,$
so that the strict inequality in the indicator can be equivalently replaced by
a non-strict inequality.
}}

{We now fix \(T>0\) and let \(n\to\infty\). Since \(Z^1\) is continuous and
strictly positive, we have
$    \tau_n\wedge T\rightarrow T,
   n\to\infty,
    \text{a.s.}
$
Moreover, on \([0,T]\), \(\widetilde h(t+s)= he^{-\delta(t+s)}\) is bounded away from zero and
infinity. {Hence, also employing  Lemma \ref{whbound}, there exists a constant
\(C_T>0\) such that}
\[
\left|
    IZ_s^1-KW_h(Z_s^1,\widetilde h(t+s))
\right|
\le
C_T\left(
    1+Z_s^1+(Z_s^1)^{\frac{\alpha}{\alpha-1}}
\right),
\qquad 0\le s\le T.
\]
Recalling \eqref{strong}, one has that the right-hand side is $\mathbb{P}\otimes dt$-integrable on
$\Omega \times [0,T]$. Hence, by dominated convergence,
\[
\begin{aligned}
&\mathbb E_z\bigg[
\int_0^{\tau_n\wedge T}
e^{-\int_0^s(\rho+\widetilde m(t+u))\,du}
\Big(
    IZ_s^1-KW_h(Z_s^1,\widetilde h(t+s))
\Big)
\mathds{1}_{\{Z_s^1\geq \widetilde b(t+s)\}}
\,ds
\bigg]
\\
&\quad \xrightarrow{n\to\infty}
\mathbb E_z\bigg[
\int_0^T
e^{-\int_0^s(\rho+\widetilde m(t+u))\,du}
\Big(
    IZ_s^1-KW_h(Z_s^1,\widetilde h(t+s))
\Big)
\mathds{1}_{\{Z_s^1\geq \widetilde b(t+s)\}}
\,ds
\bigg].
\end{aligned}
\]

As for the first expectation in \eqref{eq:localized-representation}, Proposition \ref{finite} and \eqref{relation} yield
$    0\le \widetilde J(t,z)\le \frac Ir z.$
Thus
\[
\begin{aligned}
0
&\le
e^{-\int_0^{\tau_n\wedge T}(\rho+\widetilde m(t+u))\,du}
\widetilde J(t+\tau_n\wedge T,Z_{\tau_n\wedge T}^1)
\le
\frac Ir
e^{-\int_0^{\tau_n\wedge T}(\rho+\widetilde m(t+u))\,du}
Z_{\tau_n\wedge T}^1 .
\end{aligned}
\]
Using the explicit solution of \(Z^1\) (cf.\ \eqref{strong}), for \(0\le s\le T\),  we have
\[
e^{-\int_0^s(\rho+\widetilde m(t+u))\,du}Z_s^1
=
z\exp\left\{
    -rs-\frac12\theta^2s-\theta B_s
\right\}\le
z\exp\left\{
    |\theta|\sup_{0\le u\le T}|B_u|
\right\}.
\]
The random variable on the right-hand side belongs to $L^1(\Omega,\mathcal{F},\mathbb{P})$. Since
\(\tau_n\wedge T\to T\) a.s.\ and \(\widetilde J\) is continuous (see {Proposition \ref{prop:Jtilde-cont}}), dominated
convergence applies to the first expectation in \eqref{eq:localized-representation} as well. Therefore, letting
\(n\to\infty\), we obtain the finite-horizon identity
\begin{align}\label{re1}
	\begin{aligned}
\widetilde J(t,z)
=&\
\mathbb E_z\bigg[
\int_0^T
e^{-\int_0^s(\rho+\widetilde m(t+u))\,du}
\Big(
    IZ_s^1-KW_h(Z_s^1,\widetilde h(t+s))
\Big)
\mathds{1}_{\{Z_s^1\geq \widetilde b(t+s)\}}
\,ds
\bigg]
\\
&+
\mathbb E_z\bigg[
e^{-\int_0^T(\rho+\widetilde m(t+u))\,du}
\widetilde J(t+T,Z_T^1)
\bigg].
\end{aligned}\end{align}
}

{It remains to let \(T\to\infty\). Again, by Proposition \ref{finite} and
\eqref{relation}, we have 
\[
\begin{aligned}
0
&\le
\mathbb E_z\bigg[
e^{-\int_0^T(\rho+\widetilde m(t+u))\,du}
\widetilde J(t+T,Z_T^1)
\bigg]
\le
\frac Ir
\mathbb E_z\bigg[
e^{-\int_0^T(\rho+\widetilde m(t+u))\,du}
Z_T^1
\bigg] \leq \frac{I}{r} z e^{-rT} .
\end{aligned}
\]
Since \(r>0\), the right-hand side of the last displayed equation above converges to zero as $T\uparrow \infty$. Moreover, Lemma \ref{whbound} and Assumption \ref{assume1} imply
\[
\mathbb E_z\bigg[
\int_0^\infty
e^{-\int_0^s(\rho+\widetilde m(t+u))\,du}
\left|
    IZ_s^1-KW_h(Z_s^1,\widetilde h(t+s))
\right|
\,ds
\bigg]<\infty.
\]
Therefore, by dominated convergence, letting \(T\to\infty\) in the
finite-horizon identity (\ref{re1}) gives
\[
\widetilde J(t,z)
=
\mathbb E_z\bigg[
\int_0^\infty
e^{-\int_0^s(\rho+\widetilde m(t+u))\,du}
\Big(
    IZ_s^1-KW_h(Z_s^1,\widetilde h(t+s))
\Big)
\mathds{1}_{\{Z_s^1\geq \widetilde b(t+s)\}}
\,ds
\bigg].
\]
This proves \eqref{eq:Jtilde-representation}.}

Finally, let \(t\ge0\) be such that {\(0< \widetilde b(t)\leq g(\widetilde h(t))<\infty\)}, and let
\(Z^1\) solve \eqref{strong} with \(Z_0^1=\widetilde b(t)\). By value matching,
$    \widetilde J(t,\widetilde b(t))=0.$
Taking \(z=\widetilde b(t)\) in \eqref{eq:Jtilde-representation} yields
\eqref{eq:boundary-integral-equation}.
\end{proof}

\begin{remark}
{It is worth emphasizing that, in the present setting, we obtain upper
semicontinuity of the free boundary \(\widetilde b\), but not its continuity.
The main reason is that the transformed problem is genuinely
time-inhomogeneous: the dependence on \(t\) enters both through the mortality
rate \(\widetilde m(t)\) and through the health process \(\widetilde h(t)\).
As a consequence, no monotonicity of the map
\(t\mapsto \widetilde J(t,z)\) is available in general, and the usual
monotonicity-based arguments for the continuity of the free boundary (see, e.g.,\ \cite{de2015note}) cannot
be applied.} {One possible alternative would be to prove stronger regularity of the
free boundary directly, without relying on monotonicity. However, the arguments
developed in \cite{de2019lipschitz,de2019free} are not directly available in
the present time-inhomogeneous setting. In particular, the application of an
implicit-function-type argument would require suitable uniform bounds, which are difficult to verify
in our framework.}

{For the same reason, the standard four-step procedure for proving uniqueness
of the boundary via an integral equation, as in Theorem~3.1 of 
\cite{peskir2005american}, is not directly applicable here. That procedure
typically requires a continuous boundary.  Therefore, the integral equation derived above should be understood as a
characterization satisfied by the free boundary, rather than as a uniqueness
result within a class of continuous boundaries. This characterization is
nevertheless sufficient for our purposes: in Section~\ref{sec:numerics}, we
solve the integral equation numerically and use the resulting boundary to
illustrate the model's financial implications.}
\end{remark}

\section{Optimal solution in terms of the primal variables}\label{sec5}
 In the previous section, we studied the properties of the dual value function $J(z,h)$ and used $(z,h)$, where $z$ denotes {the} dual variable and $h$ denotes health capital, as the coordinate system for the study. In this section, we {return to the study} of the value function $V(x,h)$ in the original coordinate system $(x,h)$, where $x$ denotes the wealth of the agent. 

\begin{pro}\label{convex}
For every fixed \(h\in\mathbb R_+\), the function \(z\mapsto J(z,h)\)
belongs to \(C^1(\mathbb R_+)\), and is strictly decreasing and strictly convex
on \(\mathbb R_+\). Moreover,
\[
\lim_{z\downarrow0}(-J_z(z,h))=+\infty,
\qquad
\lim_{z\uparrow\infty}(-J_z(z,h))=0.
\]
\end{pro}

\begin{proof}
We first show the $C^1$-property of $J(\cdot,h)$.  Fix \(h\in\mathbb R_+\), and notice that from (\ref{3-14-1}) and (\ref{3-14}) we have  \[
    J(z,h)=\widehat J(z,h)+W(z,h)-z\frac Ir .
\]
Moreover, by the time-inhomogeneous reformulation in (\ref{relation}), one has
$    \widehat J(z,h)=\widetilde J(0,z).$
Hence, by Proposition~\ref{prop:sobolev-regularity}, the map
\(z\mapsto \widehat J(z,h)\) belongs to \(C^1(\mathbb R_+)\). Since
\(W(\cdot,h)\in C^2(\mathbb R_+)\), it follows that $    J(\cdot,h)\in C^1(\mathbb R_+).$

Next, for
\(\tau\in\mathcal S\), define
\begin{align}\label{barj}
\bar J(z,h;\tau)
:=
\mathbb E_{z,h}\bigg[
&\int_0^\tau
e^{-\int_0^t(\rho+M_u^{1})\,du}
\widehat u(Z_t^1,H_t^1)\,dt
\nonumber \\
&+
e^{-\int_0^\tau(\rho+M_u^{1})\,du}
\left(
W(Z_\tau^1,H_\tau^1)-Z_\tau^1\frac{I}{r}
\right)
\bigg],
\end{align}
so that $    J(z,h)=\sup_{\tau\in\mathcal S}\bar J(z,h;\tau).$ 

We first establish the strict monotonicity and strict convexity of \(J\) with
respect to \(z\), by adapting the arguments of Lemma~8.1 in
\cite{karatzas2000utility}. To that end, recall that by \eqref{3-9-3}, the function \(z\mapsto W(z,h)\) is strictly convex
and strictly decreasing. Moreover, \(z\mapsto\widehat u(z,h)\) is strictly
convex and strictly decreasing by \eqref{3-9-1}. Hence, for every fixed
\(\tau\in\mathcal S\), the map $z\mapsto \bar J(z,h;\tau)$
is strictly convex and strictly decreasing.

{Let \(0<z_1<z_2<\infty\), and let \(\tau_2\) be an optimal stopping time for
\(J(z_2,h)\). Then, we have 
\[
    J(z_1,h)
    \geq
    \bar J(z_1,h;\tau_2)
    >
    \bar J(z_2,h;\tau_2)
    =
    J(z_2,h),
\]
where the strict inequality follows from the strict monotonicity of
\(\bar J(\cdot,h;\tau_2)\). Hence, \(J(\cdot,h)\) is strictly decreasing.}

As for the strict convexity, let again \(0<z_1<z_2<\infty\), let \(\lambda\in(0,1)\), and set $z_\lambda:=\lambda z_1+(1-\lambda)z_2.$
Let \(\tau_\lambda\) be optimal for \(J(z_\lambda,h)\). By the strict convexity
of \(\bar J(\cdot,h;\tau_\lambda)\),
\[
\begin{aligned}
J(z_\lambda,h)
&=
\bar J(z_\lambda,h;\tau_\lambda) <
\lambda \bar J(z_1,h;\tau_\lambda)
+(1-\lambda)\bar J(z_2,h;\tau_\lambda)
\\
&\leq
\lambda J(z_1,h)+(1-\lambda)J(z_2,h),
\end{aligned}
\]
which allows {us} to conclude that \(J(\cdot,h)\) is strictly convex.

{It remains to prove the limiting behavior of \(J_z\). By Proposition
\ref{finite}, we have 
\[
    W(z,h)-z\frac Ir
    \leq J(z,h)\leq W(z,h).
\]
We first consider the limit as \(z\downarrow0\). By the explicit representation
of \(W\) in (\ref{3-9-3}), one has $    \lim_{z\downarrow0}W(z,h)=+\infty.$
Therefore, the lower bound above implies $    \lim_{z\downarrow0}J(z,h)=+\infty.$
Fix now \(y>0\) and notice that since \(J(\cdot,h)\) is convex, for every \(0<z<y\), one has 
\[
    J_z(z,h)
    \leq
    \frac{J(y,h)-J(z,h)}{y-z}.
\]
Letting \(z\downarrow0\) in the latter equation and using that \(\lim_{z\downarrow 0}J(z,h)= +\infty\) we find
\[
    \lim_{z\downarrow0}(-J_z(z,h))=+\infty.
\]}

{We now consider the limit as \(z\uparrow\infty\), whose proof is {slightly} more involved. From the upper bound
\(J(z,h)\leq W(z,h)\) and from \eqref{3-9-3}, we have
\begin{align}\label{re2}
	 \limsup_{z\uparrow\infty}\frac{J(z,h)}{z}
    \leq
    \lim_{z\uparrow\infty}\frac{W(z,h)}{z}
    =0.
    \end{align}
We next show that the reverse inequality also holds. By \eqref{4-2}, for any
fixed \(T>0\), choosing the deterministic stopping time \(\tau\equiv T\) gives
\[
\widehat J(z,h)
\ge
\mathbb E_{z,h}\left[
\int_0^T
e^{-\int_0^t(\rho+M_u^{1})du}
\Big(
IZ_t^{1}-KW_h(Z_t^{1},H_t^1)
\Big)dt
\right].
\]
Noticing that
\[
\mathbb E_{z,h}\left[
e^{-\int_0^t(\rho+M_u^{1})du}Z_t^{1}
\right]
=
ze^{-rt},
\]
and recalling the power form of \(W_h\) in (\ref{b3}) as well as 
Assumption~\ref{assume1}, one has that
\begin{align*}
    \liminf_{z\uparrow\infty}\frac{\widehat J(z,h)}{z}
    \ge
    \frac Ir(1-e^{-rT}).
\end{align*}
Letting \(T\to\infty\), and using the upper bound
\(\widehat J(z,h)\le \frac Ir z\), we obtain
$    \lim_{z\uparrow\infty}\frac{\widehat J(z,h)}{z}
    =
    \frac Ir.
$
Therefore, since
\[
    J(z,h)=\widehat J(z,h)+W(z,h)-z\frac Ir
    \quad\text{and}\quad
    \lim_{z\uparrow \infty}  \frac{W(z,h)}{z}=0,
\]
we conclude that
\begin{align}\label{limitz}
	\lim_{z\uparrow\infty}\frac{J(z,h)}{z}=0.
	\end{align}
    }

{Finally, since \(J(\cdot,h)\) is convex and decreasing, \(J_z(\cdot,h)\) is
nondecreasing and nonpositive. Hence
\[
    \ell:=\lim_{z\uparrow\infty}J_z(z,h)
\]
exists in {\((-\infty,0]\), since \(J_z(\cdot,h)\) is
nondecreasing and \(J_z(z,h)>-\infty\) for each \(z>0\)}. If \(\ell<0\), then for all
\(\varepsilon>0\), there exists $N>0$ such that for all $z>N$ we have $    |J_z(z,h) -\blue{\ell}|\leq \varepsilon$.
Thus,  given $z$ and $z_0$ large enough such that $z>z_0$, \(J(z,h)\le J(z_0,h)+ (\varepsilon+{\ell})(z-z_0)\), which implies for $\varepsilon=-\frac{{\ell}}{2}$, for example, 
\[
    \limsup_{z\uparrow\infty}\frac{J(z,h)}{z}\le\frac{{\ell}}{2} <0,
\]
contradicting \eqref{limitz}. Therefore \(\ell=0\);
that is, also thanks to (\ref{re2}), we have $    \lim_{z\uparrow\infty}(-J_z(z,h))=0.$}

\end{proof}

We now provide the following theorem that establishes a dual relation between the original problem (\ref{2-6}) and the optimal stopping problem (\ref{3-10}) and the corresponding  optimal strategies.

\begin{theorem}\label{dualityrelation2}
For given $(x,h) \in \mathcal{O}$, the following duality relations hold:
\begin{align*}
V(x,h) = \inf_{z>0}[{J}(z,h)+zx], \quad J(z,h) = \sup_{x>0} [V(x,h)-zx].
\end{align*}  
{Moreover, let the optimal investment time $\tau^*:=\tau^*(z^*,h)$. Before healthcare investment, the
optimal controls are given, for $0\le t<\tau^*$, by
\[
c_t^*=\mathcal{I}^u(Z_t^{1,*},H_t^1),
\qquad
\pi_t^*=\frac{\theta}{\sigma}
Z_t^{1,*}J_{zz}(Z_t^{1,*},H_t^1),
\]}
and the corresponding {pre-investment} optimal wealth process is $X_t^*=-J_z(Z_t^{1*},H_t^1),$ where \(Z^{1*}\) is the solution to Equation \eqref{3-9} with initial
condition \(z^*:=z^*(x,h)\) satisfying \(x=-J_z(z^*,h)\). 
{After $\tau^*$, the optimal controls are the post-investment controls described in Theorem~\ref{dualityrelation}.}
\end{theorem}
\begin{proof}
The proof is given in Appendix \ref{produalityrelation2}.
\end{proof}

By Theorem~\ref{dualityrelation2}, for every \((x,h)\in\mathcal O\), $V(x,h)=\inf_{z>0}\big[J(z,h)+zx\big].$
For fixed \(h\in\mathbb R_+\), Proposition~\ref{convex} implies that \(z\mapsto J(z,h)\) is strictly convex and strictly decreasing. Hence the map $z\mapsto -J_z(z,h)$
is continuous, strictly decreasing, and maps \((0,\infty)\) onto
\((0,\infty)\). Therefore, for every \(x>0\), there exists a unique
\(z^*(x,h)>0\) satisfying $x=-J_z(z^*(x,h),h).$
The infimum in the dual representation is attained at \(z^*(x,h)\), and
\[
V(x,h)=J(z^*(x,h),h)+xz^*(x,h).
\]

Moreover, for each fixed \(h\in\mathbb R_+\), the map $x\mapsto z^*(x,h)$
is continuous and strictly decreasing from \((0,\infty)\) onto \((0,\infty)\).
Its inverse is
$
z\mapsto x^*(z,h):=-J_z(z,h),
$
which is also continuous and strictly decreasing from \((0,\infty)\) onto
\((0,\infty)\).

Let us now define, for \(h\in\mathbb R_+\),
\begin{equation}\label{newb}
\left\{
\begin{aligned}
{\widehat b(h)}
&{:=
\begin{cases}
x^*(b(h),h), & b(h)>0,\\
+\infty, & b(h)=0,
\end{cases}}\\
\widehat{\mathcal W}
&:=\{(x,h)\in\mathcal O:(z^*(x,h),h)\in\mathcal W\},\\
\widehat{\mathcal I}
&:=\{(x,h)\in\mathcal O:(z^*(x,h),h)\in\mathcal I\}.
\end{aligned}
\right.
\end{equation}
{When \(b(h)=0\), this convention is consistent with the limiting
behavior of \(x^*(b(h),h)\), since Proposition~\ref{convex} gives
\(\lim_{z\downarrow0}x^*(z,h)=\lim_{z\downarrow0}(-J_z(z,h))=+\infty\).}

Then, by Lemma~\ref{b} and the strict monotonicity of
\(x\mapsto z^*(x,h)\), we have
\[
\widehat{\mathcal W}
=
\{(x,h)\in\mathcal O:0<x<\widehat b(h)\},
\qquad
\widehat{\mathcal I}
=
\{(x,h)\in\mathcal O:x\geq \widehat b(h)\}.
\]
Consequently, the optimal healthcare investment time in the primal variables is
\[
\tau^*(x,h)
=
\inf\left\{
s\geq0:
X_s^{*}\geq \widehat b(H_s^{1,h})
\right\},
\]
where \(X^{*}\) denotes the optimal wealth process before healthcare
investment.

Thanks to \eqref{newb} and Theorem~\ref{dualityrelation2}, we can express the
optimal healthcare investment threshold \(\widehat b\) in terms of the dual
boundary \(b\).

\begin{pro}\label{hatb}
For every \(h\in\mathbb R_+\) such that \(0<b(h)<\infty\), one has
\[
    \widehat b(h)
    =
    -W_z(b(h),h)+\frac Ir=-W_z(\widetilde{ b}(0),h)+\frac Ir .
\]
\end{pro}

\begin{proof}
 Since
\(z^*(x,h)\) is characterized by $    x=-J_z(z^*(x,h),h),$
we have, for every \(z>0\),
$    x^*(z,h)=-J_z(z,h).$
Therefore, 
$    \widehat b(h)=x^*(b(h),h)=-J_z(b(h),h).$
From (\ref{3-14-1}) and (\ref{3-14}), we obtain
\[
    J_z(z,h)=\widehat J_z(z,h)+W_z(z,h)-\frac Ir.
\]
Moreover, since \(\widehat J(z,h)=\widetilde J(0,z)\) and
\(b(h)=\widetilde b(0)\), the smooth-fit condition in Corollary~\ref{cor:smooth-fit}
implies $\widehat J_z(b(h),h)=0.$
Hence
\[
    \widehat b(h)
    =
    -J_z(b(h),h)
    =
    -W_z(b(h),h)+\frac Ir=-W_z(\widetilde{ b}(0),h)+\frac Ir .
\]
This completes the proof.
\end{proof}

\section{Numerical Study}\label{sec:numerics}

In this section, we present numerical illustrations of the optimal healthcare investment boundary and the associated optimal consumption and portfolio strategies, and discuss their financial implications. We first display the healthcare investment boundary in the primal variables, as described in Proposition \ref{hatb}. We then investigate the sensitivity of the optimal boundary with respect to relevant model parameters and analyze the resulting economic implications. Finally, we illustrate the optimal consumption and portfolio rules, {both in the benchmark case without healthcare investment and in the case where healthcare investment is available.} The nonlinear integral equation in (\ref{eq:boundary-integral-equation}) is solved by the recursive iteration method proposed by \cite{huang1996pricing}; a detailed explanation of the numerical scheme is provided in Appendix \ref{numerical}. All numerical computations were performed using Mathematica 13.1. {The code is available upon request.}

\begin{table}[htbp]
\caption{The parameter set in the numerical illustrations}
\label{tab1}
\centering
	\begin{tabular}{cccccc}
		\toprule  
		 Symbol	& Interpretation &Value  \\
		  \midrule
	Panel A. Financial market and preference 	& &\\
	\midrule
	$r$ & Risk-free {interest rate} &0.048  \\
	$\mu$	& Expected risky return &0.108 \\
	 $\sigma $& Std. {deviation of} risky returns &0.20  \\
	 
	$\rho$ & Subjective discount rate &0.05  \\
	
	  	  \midrule
	Panel B. Survival and health dynamics 	& &\\
	\midrule
	$m^0$ &Minimal mortality intensity &0.0237  \\
		$m^1$	& Endogenous mortality intensity &0.0017 \\
	 $\kappa$& Mortality intensity convexity &1.80  \\
	 $\delta$ & Deterministic health depreciation rate &0.0055  \\
		$1-\alpha$	&Cobb-Douglas {health weight} &0.7742 \\
		$\beta$	& Curvature of health production function &0.19 \\	
		$I$	& {Healthcare investment rate}&2 \\			
	\bottomrule
	\end{tabular}
\end{table}

Most of the basic parameter values in Table 1 are taken from \cite{hugonnier2013health}. The values of the financial parameters in Panel A are conventional, and the calibrated value of the subjective discount rate $\rho$ is standard in Panel Study of Income Dynamics (PSID) studies. As discussed in Remark \ref{remark}, we choose the health production function $K=I^\beta$ with $\beta=0.19$ in this section, where the value of $\beta$ is taken from \cite{dalgaard2014optimal}.

We consider two types of agents. The ``healthy agent'' has an excellent initial health status, namely $h\in[100,200]$, whereas the ``sick agent'' has a poor initial health status, namely $h\in[2,10]$. {These two types of agents exhibit different health and mortality dynamics, as illustrated in Figures \ref{simulation} and \ref{simulation1}. The qualitative behavior of the optimal investment boundary is similar in several baseline experiments, but some comparative statics, in particular with respect to the subjective discount rate, differ across sick and healthy agents. We therefore discuss the economic mechanisms separately when needed.}

\begin{figure}[htbp]
    \centering
    \begin{subfigure}{0.45\textwidth}
        \includegraphics[width=\linewidth]{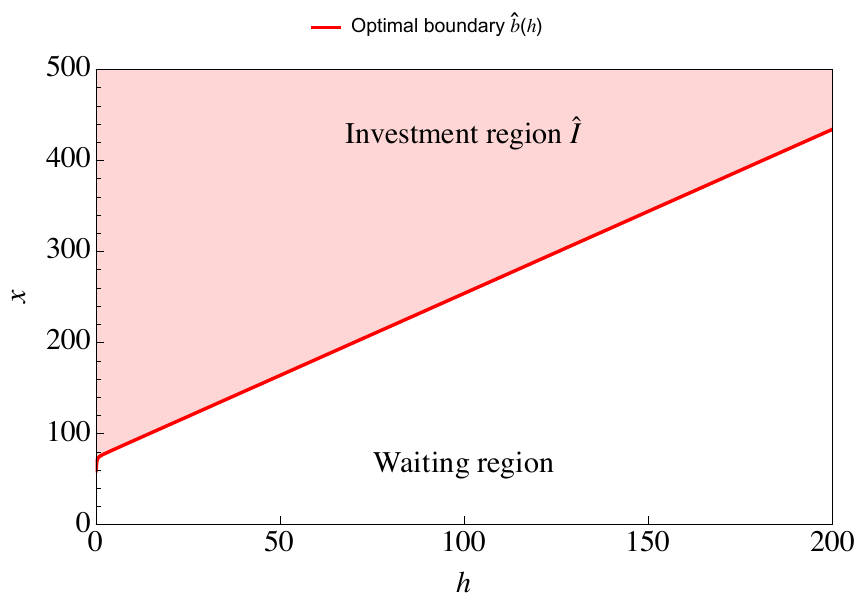}
        \label{fig4-boundary}
    \end{subfigure}
    \caption{The healthcare investment boundary $\widehat{b}(h)$ (red line) in primal variables $(x,h)$. The investing region $\widehat{\mathcal I}$ (shaded in red) indicates the set of points where the agent initiates healthcare investment---that is, when wealth $x$ (in dollars, vertical axis) enters this region. The parameter values are specified in Table \ref{tab1}. }
    \label{fig4}
\end{figure}

\subsection{Optimal boundaries and sensitivity analysis}

{From Figure \ref{fig4}, we observe that the investment boundary in the primal variables, $\widehat b(h)$, increases with the health status $h$ under the baseline parameter configuration. Equivalently, a higher level of wealth is required for healthier agents to initiate healthcare investment. This pattern can be understood from the marginal effectiveness of preventive healthcare. For agents with poor health, healthcare investment has a relatively strong effect on the future health path and on the reduction of mortality risk. As a result, the agent is willing to invest even at a relatively low wealth level. By contrast, when the agent is already in good health, the marginal benefit of further health improvement is smaller, and the agent optimally postpones healthcare investment unless her wealth is sufficiently high.}

 \begin{figure}[htbp]
    \centering
    \begin{subfigure}{0.45\textwidth}
        \includegraphics[width=\linewidth]{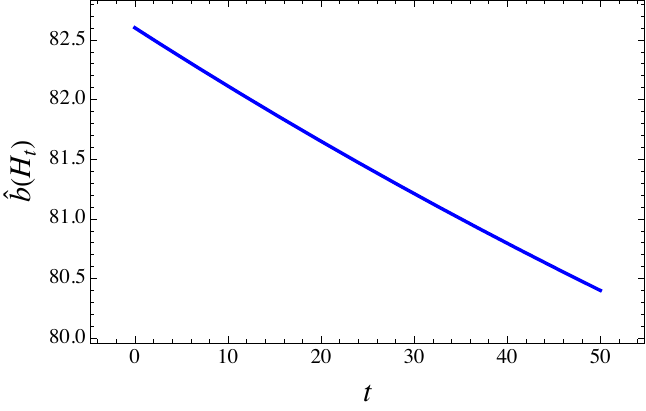}
        \caption{Sick agent with $h=5$.}
        \label{fig5-sick}
    \end{subfigure}
        \begin{subfigure}{0.45\textwidth}
        \includegraphics[width=\linewidth]{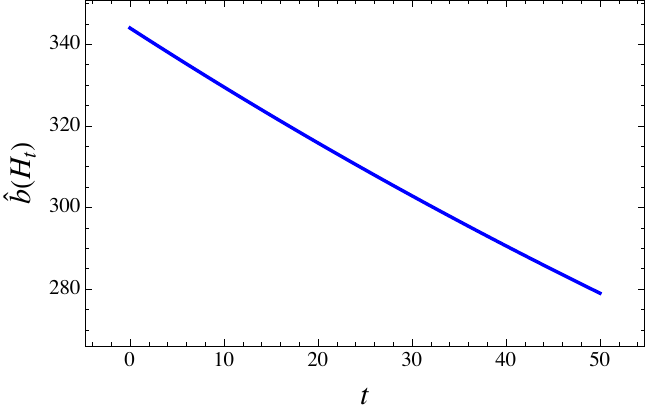}
        \caption{Healthy agent with $h=150$.}
        \label{fig5-healthy}
    \end{subfigure}
    \caption{The healthcare investment threshold $\widehat{b}(H_t)$ (in dollars, vertical axis) is shown as a function of time $t$ (in years, horizontal axis). Investment in healthcare becomes optimal when the agent's wealth $X_t$ exceeds the threshold $\widehat{b}(H_t)$ (blue line). The left panel depicts the case of an initially sick agent ($h = 5$), while the right panel corresponds to an initially healthy agent ($h = 150$).
}
    \label{fig5}
\end{figure}

{In Figure \ref{fig5}, we examine the evolution of the optimal healthcare investment boundary $\widehat b(H_t)$ over time. Under the baseline parameter configuration, both the initially sick agent, with $h=5$, and the initially healthy agent, with $h=150$, exhibit a declining investment boundary as they age. Since health capital decreases over time before healthcare investment, this pattern is consistent with the increasing relation between $\widehat b(h)$ and $h$ observed in Figure \ref{fig4}. Economically, as the agent's health gradually deteriorates, the marginal benefit of precautionary healthcare investment increases, so that healthcare investment becomes optimal at a lower wealth level. This qualitative behavior is robust for both the sick and the healthy agent under our baseline calibration, although the level of the boundary differs substantially across the two initial health statuses.}

 \begin{figure}[htbp]
    \centering
    \begin{subfigure}{0.45\textwidth}
        \includegraphics[width=\linewidth]{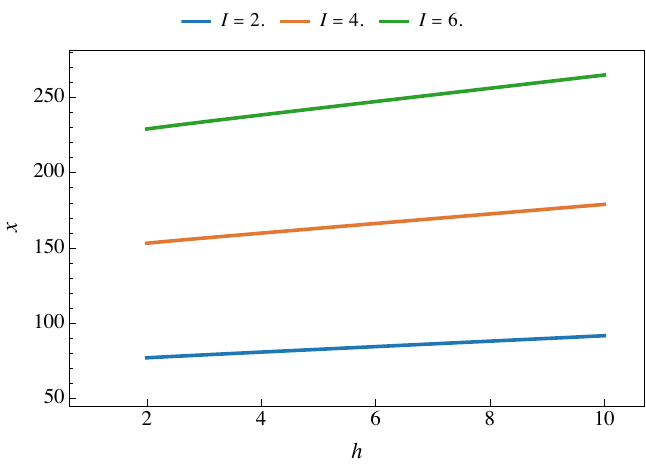}
        \caption{Sick agent with $h \in [2,10]$.}
        \label{fig6-sick}
    \end{subfigure}
        \begin{subfigure}{0.45\textwidth}
        \includegraphics[width=\linewidth]{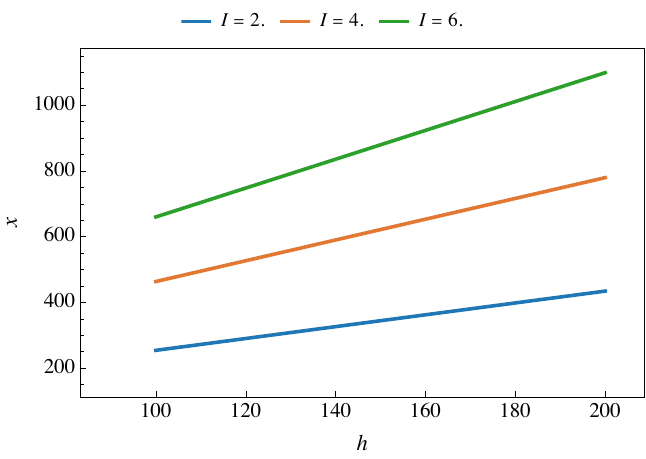}
        \caption{Healthy agent with $h \in [100,200]$.}
        \label{fig6-healthy}
    \end{subfigure}
    \caption{ The healthcare investment boundary $\widehat{b}(h)$ (colored lines) varies with the investment intensity $I$ (in dollars per unit of time). The left panel illustrates the case of an initially sick agent with $h \in [2,10]$, while the right panel corresponds to a healthy agent with $h\in [100,200]$.  }
    \label{fig6}
\end{figure}

{In Figure \ref{fig6}, we study the sensitivity of the healthcare investment boundary $\widehat b$ with respect to the investment intensity $I$. We find that, as the investment intensity increases, the boundary shifts upward and the continuation region expands for both healthy and sick agents. This pattern reflects the trade-off induced by a higher investment intensity. On the one hand, a larger $I$ increases the cost of healthcare investment; on the other hand, due to decreasing returns in the health production function, the associated marginal improvement in health becomes relatively smaller. As a result, the attractiveness of immediate investment decreases. Consequently, both healthy and sick individuals become less inclined to undertake precautionary healthcare investment when the investment intensity is high.}

\begin{figure}[htbp]
    \centering
    \begin{subfigure}{0.45\textwidth}
        \includegraphics[width=\linewidth]{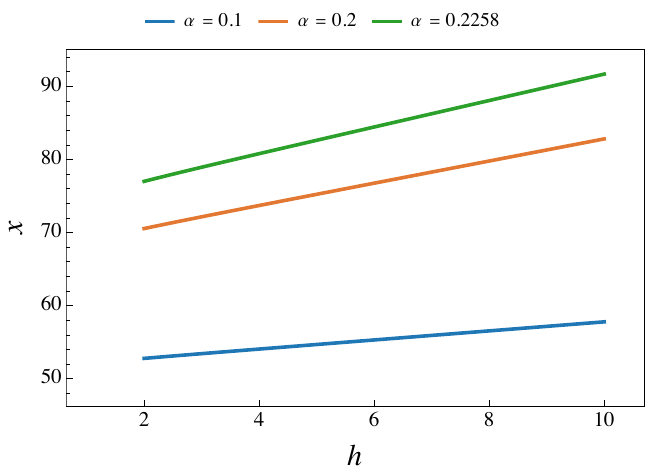}
        \caption{Sick agent with $h\in [2,10]$.}
        \label{fig8-sick}
    \end{subfigure}
        \begin{subfigure}{0.45\textwidth}
        \includegraphics[width=\linewidth]{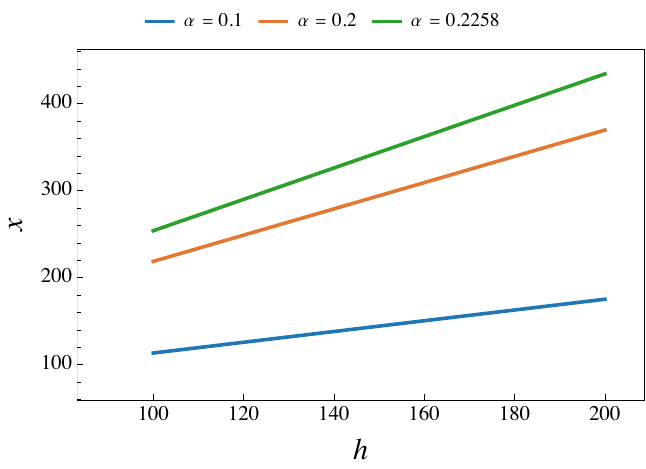}
        \caption{Healthy agent with $h\in [100,200]$.}
        \label{fig8-healthy}
    \end{subfigure}
    \caption{ The healthcare investment boundary $\widehat{b}(h)$ (colored lines) varies with the {Cobb--Douglas} parameter $\alpha$. The left panel illustrates the case of an initially sick agent with $h \in [2,10]$, while the right panel corresponds to a healthy agent with $h\in [100,200]$.  }
    \label{fig8}
\end{figure}

{Figure \ref{fig8} shows the effect of a change in the Cobb--Douglas parameter $\alpha$ on the boundary $\widehat b$. We find that a higher $\alpha$ shifts the boundary upward. Since a larger $\alpha$ means that the agent assigns relatively more weight to consumption and less weight to health capital in utility, the incentive to invest in healthcare becomes weaker. As a result, healthcare investment is delayed and a higher wealth level is required to trigger investment.}

\begin{figure}[htbp]
    \centering
    \begin{subfigure}{0.45\textwidth}
        \includegraphics[width=\linewidth]{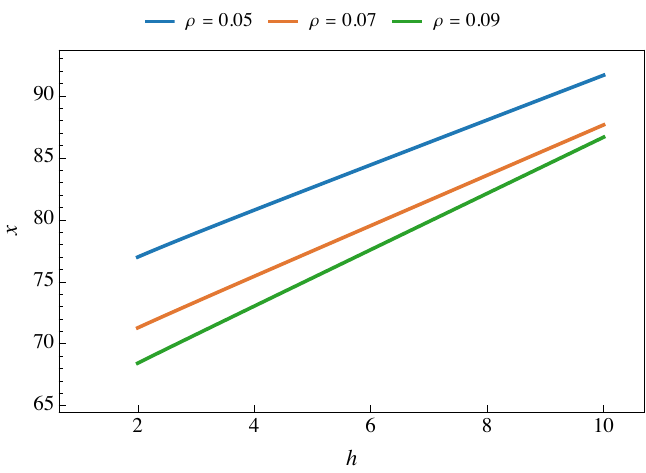}
        \caption{Sick agent with $h \in [2,10] $.}
        \label{fig9-rho-sick}
    \end{subfigure}
        \begin{subfigure}{0.45\textwidth}
        \includegraphics[width=\linewidth]{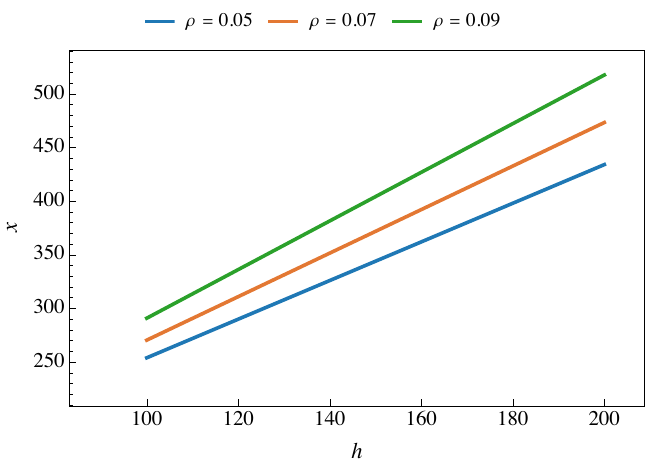}
        \caption{Healthy agent with $h \in [100,200] $.}
        \label{fig9-rho-healthy}
    \end{subfigure}
    \caption{ The healthcare investment boundary $\widehat{b}(h)$ (colored lines) varies with the subjective discount rate $\rho$. The left panel illustrates the case of an initially sick agent with $h \in [2,10]$, while the right panel corresponds to a healthy agent with $h\in [100,200]$. }
    \label{fig9}
\end{figure}

Figure \ref{fig9} shows the effect of varying the subjective discount rate $\rho$. An increase in $\rho$ reflects greater impatience, leading the agent to discount future utility more heavily and to place more value on near-term consumption. The figure reveals that the effect of $\rho$ differs between sick and healthy agents. For healthy agents, a higher $\rho$ shifts the investment boundary upward. Since these agents already hold a large stock of health capital, the current utility gain from additional health investment is relatively limited, while the cost of investment is immediate. A higher discount rate therefore makes immediate healthcare investment less attractive. By contrast, for sick agents, a higher $\rho$ shifts the boundary downward. In this case, additional health capital generates a stronger immediate utility gain and a stronger reduction in mortality risk. Therefore, even a more impatient agent may find it optimal to invest at a lower wealth level when the initial health stock is poor.

\subsection{Optimal portfolio and consumption plan}

{We now illustrate the optimal consumption and portfolio strategies. We first consider the benchmark case without healthcare investment. In the case $I=K=0$, equation (4.4) implies that $\widehat J(z,h)=0$, and the healthcare investment time is immaterial. The problem then reduces to a Merton-type consumption--portfolio problem with a random time horizon and health-dependent mortality. The resulting consumption--wealth and portfolio--wealth ratios are reported in Figure \ref{fig91}. This calibration serves as a reference point against which the effects of a positive healthcare investment intensity can be assessed.}

\begin{figure}[htbp]
    \centering
    \begin{subfigure}[t]{0.48\textwidth}
        \centering
        \includegraphics[width=\linewidth]{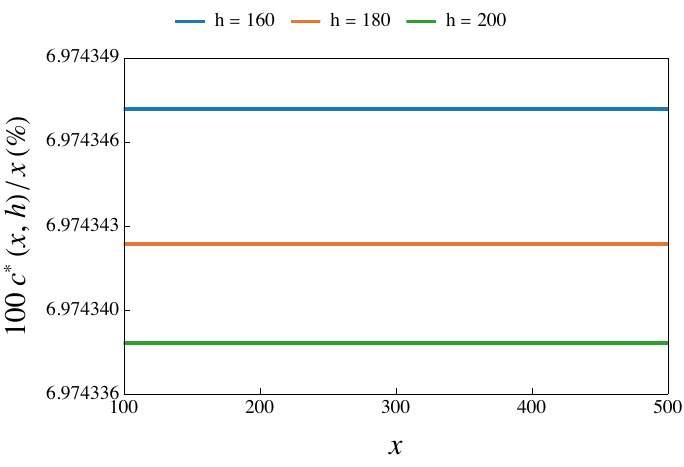}
        \caption{Optimal consumption--wealth ratio ($I=0$).}
        \label{fig91a}
    \end{subfigure}
    \hfill
    \begin{subfigure}[t]{0.48\textwidth}
        \centering
        \includegraphics[width=\linewidth]{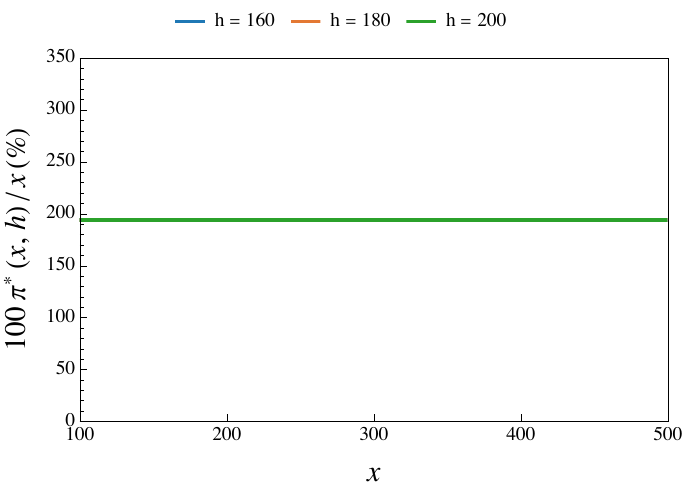}
        \caption{Optimal portfolio--wealth ratio ($I=0$).}
        \label{fig91b}
    \end{subfigure}
    \caption{Left: Optimal consumption--wealth ratio ($\%$, vertical axis) as a
    function of initial wealth $x$ (in dollars, horizontal axis), shown for
    different initial health levels $h$ (colored lines). Right: Optimal
    portfolio--wealth ratio ($\%$, vertical axis) as a function of initial wealth
    $x$, shown for the same health levels. {The Merton portfolio benchmark is
    $193.748\%$.}}
    \label{fig91}
\end{figure}

{Figure \ref{fig91a} shows that the optimal consumption--wealth ratio is independent of wealth and varies only very mildly with health. In the displayed calibration, the ratio decreases slightly with the agent's health status: healthier agents consume a marginally smaller fraction of their wealth. In the exact benchmark $I=K=0$, the direct multiplicative effect of health in the Cobb--Douglas utility function $u(c,h)=c^\alpha h^{1-\alpha}$ largely cancels from the consumption--wealth ratio. The remaining health dependence is induced by mortality: a higher health stock reduces mortality risk and lengthens the agent's effective planning horizon, which leads to a slightly lower consumption--wealth ratio. At the same time, the homotheticity of preferences implies that the ratio is flat in wealth.}

{Figure \ref{fig91b} shows that the optimal portfolio--wealth ratio is constant. In the exact benchmark $I=K=0$, the Merton first-order condition gives
\[
    \frac{\pi^*(x,h)}{x}
    =
    \frac{\mu-r}{\sigma^2(1-\alpha)} .
\]
Hence the portfolio share is independent of both wealth and health. The horizontal lines in Figure \ref{fig91b} are therefore consistent with the benchmark Merton mechanism.}

\begin{figure}[htbp]
    \centering
    \begin{subfigure}[t]{0.48\textwidth}
        \centering
        \includegraphics[width=\linewidth]{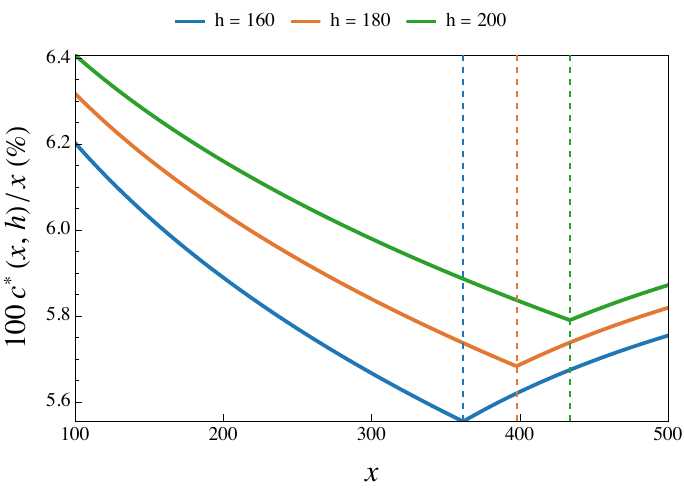}
        \caption{Optimal consumption--wealth ratio ($I=2$).}
        \label{fig10a}
    \end{subfigure}
    \hfill
    \begin{subfigure}[t]{0.48\textwidth}
        \centering
        \includegraphics[width=\linewidth]{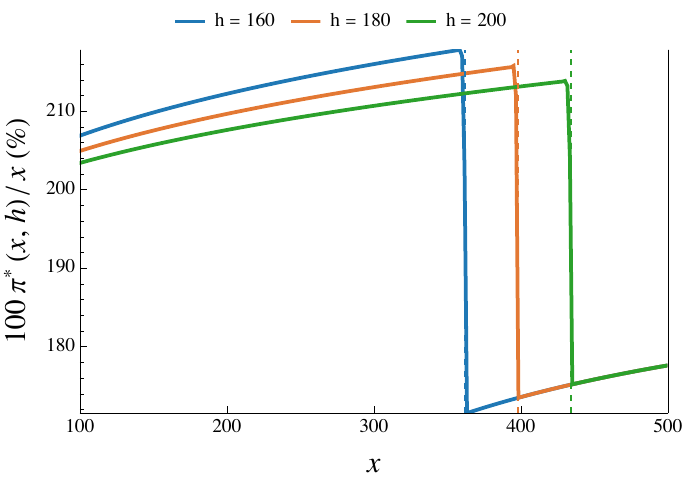}
        \caption{Optimal portfolio--wealth ratio ($I=2$).}
        \label{fig10b}
    \end{subfigure}
    \caption{Left: Optimal consumption--wealth ratio ($\%$, vertical axis) as a
    function of initial wealth $x$ (in dollars, horizontal axis), shown for
    different initial health levels $h$ (colored lines). Right: Optimal
    portfolio--wealth ratio ($\%$, vertical axis) as a function of initial wealth
    $x$, shown for the same health levels. {In both panels, the vertical
    dashed lines indicate the corresponding endogenous healthcare investment
    thresholds $\widehat b(h)$.}}
    \label{fig10}
\end{figure}

{We next analyze the optimal consumption--wealth and portfolio--wealth ratios when healthcare investment is available. In particular, we set $I=2$ and examine how the optimal policies depend jointly on wealth and health. The vertical dashed lines in Figure \ref{fig10} indicate the corresponding healthcare investment boundaries. Hence, for each health level, the region to the left of the dashed line is the continuation region, where the agent postpones healthcare investment, whereas the region to the right is the investment region, where healthcare investment has already been undertaken.}

{Figure \ref{fig10a} shows that the optimal consumption--wealth ratio decreases with wealth in the continuation region and then increases again after healthcare investment has been triggered. Before investment, as wealth approaches the endogenous boundary, the agent anticipates the imminent healthcare expenditure and preserves more wealth in order to finance the future investment cost. This precautionary channel pushes down the consumption--wealth ratio near the boundary. Once the boundary is reached and healthcare investment has been undertaken, the regime changes: the agent pays the healthcare investment cost, the health dynamics switch to the post-investment dynamics, and the consumption policy moves to the post-investment regime. In that region, the consumption--wealth ratio increases gradually with wealth.}

{Figure \ref{fig10a} also shows that health affects the consumption policy more visibly than in the benchmark case. In particular, for a given wealth level, healthier agents consume a larger fraction of wealth, consistent with \cite{bolin2020consumption} and \cite{hugonnier2013health}. This is again in line with the Cobb--Douglas utility channel: a higher health stock raises the marginal utility value of consumption. Compared with Figure \ref{fig91a}, where consumption ratios vary only slightly across health levels, Figure \ref{fig10a} shows that the availability of a substantial healthcare investment option amplifies the role of health in shaping the optimal consumption plan.}

{Figure \ref{fig10b} shows that the optimal portfolio--wealth ratio is no longer constant, in contrast to the benchmark case. In the continuation region, the portfolio ratio increases with wealth. When wealth is low, the agent behaves more cautiously because healthcare investment remains a valuable future option and because wealth must be preserved to finance both future consumption and possible healthcare costs. As wealth increases and the agent approaches the investment boundary, risk-taking becomes more attractive. Once healthcare investment is triggered, the portfolio ratio displays a visible downward jump and then increases again gradually with wealth. This discontinuity reflects the change in the effective financial position after investment: part of wealth is committed to healthcare expenditure, and the post-investment problem is governed by different health and mortality dynamics.}

\subsection{A numerical illustration of a non-monotone boundary $\widetilde{b}(t)$.}
\label{subsec:nonmonotone-boundary}

{The baseline calibration produces a monotone investment boundary. This
monotonicity, however, should not be interpreted as a general implication of
the model. Indeed, the theoretical analysis in Section~\ref{sec4} does not establish the
monotonicity of \(t\mapsto \widetilde b(t)\), or equivalently of
\(h\mapsto b(h)\). To show that this lack of a general monotonicity result is
not merely technical, we provide below a numerical example, under a different
admissible parameter configuration, in which the transformed dual boundary
\(\widetilde b(t)\) is non-monotone.}


\begin{table}[htbp]
\centering
\caption{Alternative parameter configuration generating a non-monotone $\widetilde{b}$.}
\label{tab:nonmonotone-parameters}
\begin{tabular}{cccccccccccc}
\hline
\(r\) & \(\mu\) & \(\sigma\) & \(\rho\) & \(m^0\) & \(m^1\) & \(\kappa\) & \(\delta\) & \(\alpha\) & \(\beta\) & \(I\) & \(K\) \\
\hline
0.070 & 0.134 & 0.20 & 0.360 & 0.025 & 0.400 & 3.50 & 0.016 & 0.70 & 0.25 & 2 & \(I^\beta\) \\
\hline
\end{tabular}
\end{table}

{We consider the alternative parameter configuration reported in
Table~\ref{tab:nonmonotone-parameters}, which satisfies
Assumption~\ref{assume1}. Taking \(h=2\), the resulting transformed dual
boundary \(t\mapsto \widetilde b(t)\) is non-monotone, as shown in
Figure~\ref{fig11a}. It is worth noting that, although \(\widetilde b(t)\) is
not monotone, the corresponding primal boundary \(\widehat b(h)\) remains
increasing in the plotted range; see Figure~\ref{fig11b}. This is consistent
with the monotonicity of the primal boundary observed under the baseline
calibration in Figure~\ref{fig4}.}

\begin{figure}[htbp]
    \centering
    \begin{subfigure}{0.45\textwidth}
        \includegraphics[width=\linewidth]{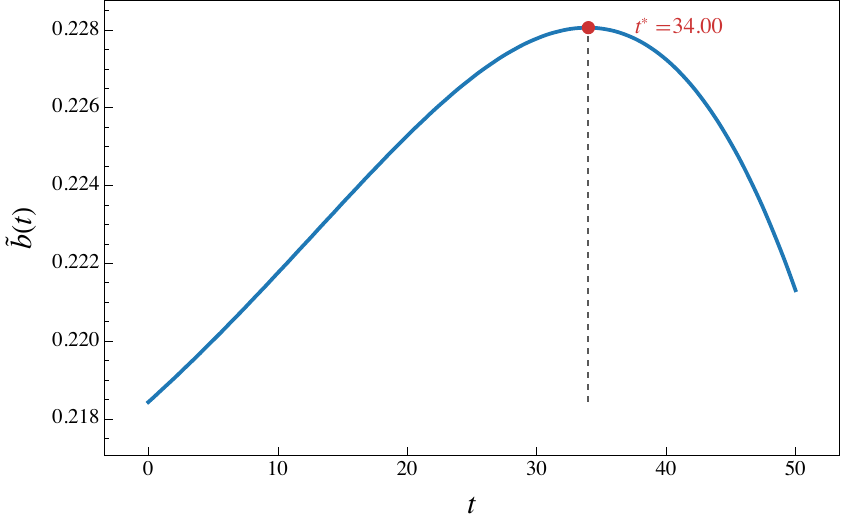}
        \caption{Non-monotone dual boundary $\widetilde{b}(t)$.}
        \label{fig11a}
    \end{subfigure}
    \begin{subfigure}{0.45\textwidth}
        \includegraphics[width=\linewidth]{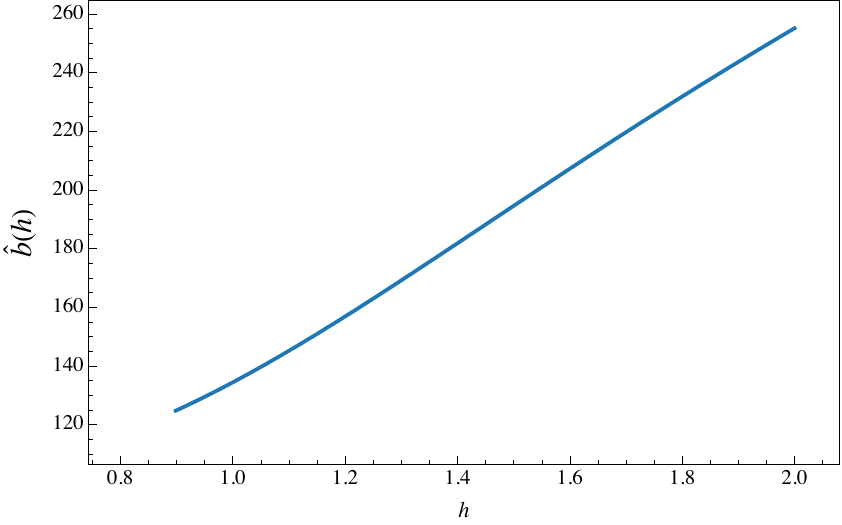}
        \caption{Primal investment boundary $\widehat{b}(h)$.}
        \label{fig11b}
    \end{subfigure}
   \caption{{Left: Transformed dual boundary \(\widetilde b(t)\) as a function of
time \(t\). The boundary first increases and then decreases, attaining its
maximum at approximately \( t^*=34.00\), as indicated by the vertical
dashed line. Right: Corresponding investment boundary in the primal variables,
\(\widehat b(h)\), as a function of the health level \(h\). Although the
transformed dual boundary is non-monotone, the primal boundary remains
increasing over the plotted health range.}}
    \label{fig11}
\end{figure}

{It should be stressed that this parameter configuration is not intended to
replace the baseline calibration. The parameters reported in Table~\ref{tab1}
are more realistic and are chosen as the main calibration for our numerical
analysis. The purpose of the present example is only to illustrate that
non-monotonicity of the transformed dual boundary $\widetilde{b}(t)$ may arise under admissible
parameter configurations.}

\section{Discussion and Conclusions}\label{sec7}

In this paper, we study a consumption/portfolio problem in which the agent can
also choose the time at which to make an irreversible precautionary investment
in health, thus facing a trade-off between a costly health investment and a
reduction in her force of mortality. The optimization problem is formulated as
a stochastic control-stopping problem over a random time horizon, with two state
variables: wealth and health capital.

We first transform the original problem, by martingale and duality methods,
into its dual problem, which is an {infinite-horizon} two-dimensional optimal
stopping problem. We then study this optimal stopping problem by probabilistic
arguments. Due to the lack of monotonicity of the optimal stopping boundary, we
establish the regularity properties needed to represent the stopping region by
an upper-semicontinuous boundary and to characterize it through a nonlinear
integral equation. Finally, we obtain the optimal strategies in terms of the
primal variables and show that the agent optimally invests in health whenever
her wealth reaches a boundary curve that depends on her health capital.

We now compare our framework and findings with those of
\cite{hugonnier2013health}, \cite{guasoni2019consumption}, and
\cite{aurand2021mortality}. The influential work of
\cite{hugonnier2013health} develops a comprehensive model of optimal
consumption, portfolio allocation, health investment, and insurance, supported
by detailed empirical calibration. Our model is inspired by this line of
research but shifts the focus to a distinct dimension: we emphasize the
\textit{timing} of healthcare investment decisions, rather than the continuous
adjustment of spending intensity.

The recent contributions by \cite{guasoni2019consumption} and
\cite{aurand2021mortality} further advance the mathematical foundations of
health-related decision-making under uncertainty. Their rigorous stochastic
control models provide important benchmarks, especially in settings without an
explicit health capital state variable. In contrast, our model incorporates
health capital as a dynamic state and adopts a control-stopping framework to
study threshold-based investment behavior. This setup captures the timing of {threshold-triggered health-spending decisions}, which are empirically relevant but difficult to
represent within standard continuous-control formulations.

While our model assumes a fixed investment {rate} \(I\) once the healthcare
decision is triggered, this simplification offers a tractable starting point for
isolating timing effects. A natural extension is to allow for flexible
investment sizes and timing. In particular, building on our approach and the
direction of \cite{guasoni2019consumption}, future work may formulate
healthcare investment as a singular control problem. Similar to consumption
choices (see  \cite{bank2001optimal}, \cite{hindy1993optimal}), this would allow
for both discrete ``gulps'' and continuous flows of health spending. Modeling
the cumulative healthcare investment process \(I_t\) as a singular control
would provide a more general and realistic framework for analyzing optimal
health investment strategies under uncertainty. We leave this {promising} and
challenging research question for future research.

\appendix
\setcounter{subsection}{0}
\renewcommand\thesubsection{A.\arabic{subsection}}

\setcounter{equation}{0}
\renewcommand\theequation{A.\arabic{equation}}
\setcounter{lemma}{0}
    \renewcommand{\thelemma}{\Alph{section}.\arabic{lemma}}

\section{Proofs from Sections 3, 4 and 5}\label{seca}
\renewcommand\theequation{A.\arabic{equation}}
\renewcommand\thesubsection{A.\arabic{subsection}}

\subsection{Proof of Proposition \ref{Wregularity}}
\begin{proof}\label{proWregularity}
First, we compute the convex dual of $u(c,h)=c^\alpha h^{1-\alpha}$ in (\ref{3-4-1}); that is,
\begin{align}\label{3-9-1}
\widehat{u}(z,h)=(1-\alpha)(\frac{z}{\alpha})^{\frac{\alpha}{\alpha-1}}h.
\end{align} 

From (\ref{2-3}) and the boundary condition $H^2_\tau=H^2_0=h$ (recall that $\tau=0$ {in} this subsection), we have 
\begin{align}\label{3-9-2}
 H_s^2 = he^{-\delta s}+ \frac{K}{\delta}(1-e^{-\delta s}), \ \forall s \geq 0.
\end{align} 

Therefore, by (\ref{3-9-1}) and (\ref{3-9-2}) we rewrite $W(z,h)$ as follows
\begin{align}\label{3-9-3}
W(z,h)=& \  \mathbb{E}_{z,h}\bigg[    \int_0^{\infty }  e^{-\int^s_0 (\rho+ M^2_u) du} \widehat{u}(Z^2_s, H^2_s) ds  \bigg] \nonumber \\
=&\ \mathbb{E}_{z,h}\bigg[    \int_0^{\infty }  e^{-\int^s_0 (\rho+ M^2_u) du} (1-\alpha)\alpha^{\frac{\alpha}{1-\alpha}}(Z^2_s)^{\frac{\alpha}{\alpha-1}} H^2_sds  \bigg]\nonumber\\
=&\ (1-\alpha)\alpha^{\frac{\alpha}{1-\alpha}} z^{\frac{\alpha}{\alpha-1}}    \int_0^{\infty }  e^{-\int^s_0 (\rho+ M^2_u) du}\mathbb{E}[(P^2_s(h))^{\frac{\alpha}{\alpha-1}}]\bigg(he^{-\delta s}+ \frac{K}{\delta}(1-e^{-\delta s})\bigg) ds \nonumber\\
=&\ (1-\alpha)\alpha^{\frac{\alpha}{1-\alpha}} z^{\frac{\alpha}{\alpha-1}}    \int_0^{\infty }  e^{\frac{1}{\alpha-1}\int^s_0 (\rho+ M^2_u) du }e^{\int^s_0 \big(\frac{\alpha(-r-\frac{1}{2}\theta^2)}{\alpha-1}+\frac{1}{2}\frac{\theta^2\alpha^2}{(\alpha-1)^2}\big)du}\nonumber  \\& \times\bigg(he^{-\delta s}+ \frac{K}{\delta}(1-e^{-\delta s})\bigg) ds,
\end{align}
where we have used the definition of $P^2_s(h)$ as in (\ref{3-4-1}) and the fact that 
\begin{align*}
\mathbb{E}[(P^2_s(h))^{\frac{\alpha}{\alpha-1}}]&=\mathbb{E}[( \gamma_{s}e^{\int^s_0 (\rho+ M^2_u) du })^{\frac{\alpha}{\alpha-1}}]
= e^{ \frac{\alpha}{\alpha-1}  \int^s_0 (\rho+ M^2_u) du }    \mathbb{E}[ \gamma_{s}^{\frac{\alpha}{\alpha-1}}]\\
&=e^{ \frac{\alpha}{\alpha-1}  \int^s_0 (\rho+ M^2_u) du }    \mathbb{E}[ (e^{-rs-\theta B_s  -\frac{1}{2}\theta^2 s})^{\frac{\alpha}{\alpha-1}}]\\
&= e^{\int^s_0 \big(\frac{\alpha}{\alpha-1}(\rho+M^2_u-r-\frac{1}{2}\theta^2)+\frac{1}{2}\frac{\theta^2\alpha^2}{(\alpha-1)^2}\big)du}.
\end{align*}
Due to Assumption \ref{assume1}, from (\ref{3-9-3}) it is easily verified that $W(z,h)<\infty$. Finally, it can also be checked from \eqref{3-9-3} that {\(W \in C^{2,1}(\mathcal{O})\) and \(W_z\in C^{2,1}(\mathcal O)\)}, it is strictly convex with respect to $z$, and it satisfies  (\ref{3-6}) by direct calculations. 
\end{proof}


\subsection{Proof of Theorem \ref{dualityrelation}}

Before we present the proof of Theorem \ref{dualityrelation}, we need the following lemmata.

{{
\begin{lemma}\label{budget2}
Let $x\geq \frac{I}{r}$ be given, and let $c\geq 0$ be a consumption process satisfying
\begin{align*}
\mathbb{E}_{x,h}\bigg[\int_0^{\infty}\gamma_s c_s\,ds\bigg]
=
x-\frac{I}{r}.
\end{align*}
Then there exists a portfolio process $\pi$ such that
$(c,\pi)\in\mathcal{A}_0(x,h)$ and
\begin{align*}
X^{c,\pi,0}_t\geq \frac{I}{r},
\qquad
\mathbb{P}_{x,h}\text{-a.s. for all }t\geq 0.
\end{align*}
\end{lemma}

\begin{proof}
Define $L_s:=\int_0^s\gamma_u c_u\,du, s\geq 0,$ and consider the nonnegative martingale
\begin{align*}
\widehat{\mathcal{M}}_s
:=
\mathbb{E}_{x,h}\big[L_\infty\mid\mathcal{F}_s\big],
\qquad s\geq 0.
\end{align*}
By the martingale representation theorem, there exists an
$\mathbb{F}$-progressively measurable process $\phi$ satisfying
\begin{align*}
\int_0^t|\phi_u|^2\,du<\infty,
\qquad
\mathbb{P}_{x,h}\text{-a.s. for every }t\geq 0,
\end{align*}
such that
\begin{align*}
\widehat{\mathcal{M}}_s
=
\widehat{\mathcal{M}}_0+\int_0^s\phi_u\,dB_u
=
x-\frac{I}{r}+\int_0^s\phi_u\,dB_u,
\qquad s\geq 0.
\end{align*}

Set $Y_s:=\widehat{\mathcal{M}}_s-L_s$
and define
\begin{align}\label{eqr1}
X_s
:=
\frac{1}{\gamma_s}
\mathbb{E}_{x,h}\bigg[
\int_s^\infty\gamma_u c_u\,du
\,\bigg|\,\mathcal{F}_s
\bigg]
+\frac{I}{r} =
\frac{Y_s}{\gamma_s}+\frac{I}{r},
\qquad s\geq 0.
\end{align}
In particular, $X_0=x$ and
$
X_s\geq \frac{I}{r}$, for $s\geq 0.$ Since $dY_s=\phi_s\,dB_s-\gamma_s c_s\,ds$
and
$\gamma_s
=
\exp\{
-rs-\theta B_s-\frac{1}{2}\theta^2s
\},$
It\^o's formula gives
\begin{align*}
d\big(\gamma_s^{-1}\big)
=
(r+\theta^2)\gamma_s^{-1}\,ds
+
\theta\gamma_s^{-1}\,dB_s.
\end{align*}
Therefore, applying It\^o's formula to $\eqref{eqr1}$ yields
\begin{align*}
dX_s
&=
\bigg[
-c_s
+
(r+\theta^2)\frac{Y_s}{\gamma_s}
+
\theta\frac{\phi_s}{\gamma_s}
\bigg]ds 
+
\frac{1}{\gamma_s}
\big(\phi_s+\theta Y_s\big)\,dB_s.
\end{align*}

Define
\begin{align*}
\pi_s
:=
\frac{1}{\sigma\gamma_s}
\big(\phi_s+\theta Y_s\big)
=
\frac{1}{\sigma\gamma_s}
\big[
\phi_s+\theta(\widehat{\mathcal{M}}_s-L_s)
\big].
\end{align*}
Using $\theta=(\mu-r)/\sigma$, we obtain
\begin{align*}
\theta^2\frac{Y_s}{\gamma_s}
+
\theta\frac{\phi_s}{\gamma_s}
=
\theta\sigma\pi_s
=
(\mu-r)\pi_s.
\end{align*}
Since $Y_s/\gamma_s=X_s-I/r$ (cf.\ (\ref{eqr1})), it follows that
\begin{align*}
dX_s
&=
\bigg[
r\bigg(X_s-\frac{I}{r}\bigg)
+
(\mu-r)\pi_s
-c_s
\bigg]ds
+
\sigma\pi_s\,dB_s \\
&=
\big[
rX_s+(\mu-r)\pi_s-c_s-I
\big]ds
+
\sigma\pi_s\,dB_s.
\end{align*}
Thus, $X$ satisfies the wealth dynamics \eqref{2-4} with $\tau=0$
and initial condition $X_0=x$.

Moreover, the local square integrability of $\phi$, together with
the continuity and strict positivity of $\gamma$ and the continuity
of $Y$, imply that
\begin{align*}
\int_0^t|\pi_s|^2\,ds<\infty,
\qquad
\mathbb{P}_{x,h}\text{-a.s.}
\end{align*}
Also, the assumed integrability of
$\int_0^\infty\gamma_s c_s\,ds$ implies that
\begin{align*}
\int_0^t c_s\,ds<\infty,
\qquad
\mathbb{P}_{x,h}\text{-a.s.}
\end{align*}
for every $t\geq 0$. Hence $(c,\pi)\in\mathcal{A}_0(x,h)$.
Finally, $X^{c,\pi,0}_s
=
X_s
\geq
\frac{I}{r}$,
$\mathbb{P}_{x,h}$-a.s.\ for all $s\geq 0$.
\end{proof}
}}

\begin{lemma}\label{lemmaa.2}
	Let $h  \in  \mathbb{R}_+ $ be given. Define $\mathcal{X}(z,h):=\mathbb{E}_{z,h}[  \int_0^{\infty} \gamma_{s} \mathcal{I}^u(z P^2_s, H^2_s) ds ]+\frac{I}{r}$. Then, for $h>0$, the function $\mathcal{X}(\cdot,h)$ is strictly decreasing and continuous on $(0,\infty)$, with 
	\begin{align*}
		\lim_{z\downarrow 0 } \mathcal{X}(z,h)=\infty, \quad \lim_{z\to +\infty}\mathcal{X}(z,h)=\frac{I}{r}.
	\end{align*}
	In particular, the function $\mathcal{X}(\cdot,h)$ on $(0,\infty)$ has a strictly decreasing inverse function $\mathcal{Z}(\cdot,h):(\frac{I}{r}
	,\infty) \to (0,\infty) $, so that 
	\begin{align*}
		\mathcal{X}(\mathcal{Z}(x,h),h)=x, \quad \forall x \in (\frac{I}{r}, \infty).
	\end{align*}
\end{lemma}
\begin{proof}
	Since $\mathcal{I}^u(z,h)= (\frac{z}{\alpha})^{\frac{1}{\alpha-1}}h$ is strictly decreasing with respect to $z$, so is $\mathcal{X}(\cdot,h)$.
{
To prove continuity, fix \(z_0>0\) and let
\(z\in[z_0/2,2z_0]\). Since \(1/(\alpha-1)<0\), the map
\(z\mapsto \mathcal I^u(z,h)\) is decreasing. Hence, for every \(s\ge0\), $    \mathcal I^u(zP_s^2,H_s^2)
    \leq
    \mathcal I^u((z_0/2)P_s^2,H_s^2).$
Moreover, the discounted process $    \gamma_s\mathcal I^u((z_0/2)P_s^2,H_s^2)$
is integrable on \(\Omega\times[0,\infty)\) by
Assumption~\ref{assume1}. Therefore, dominated convergence gives the
continuity of \(\mathcal X(\cdot,h)\) at \(z_0\). Finally, since \(z\mapsto \mathcal I^u(zP_s^2,H_s^2)\) is decreasing, the limits as $z\downarrow 0$ and $z \uparrow \infty$ follow by
monotone convergence theorem.
}
	\end{proof}

\begin{proof}[Proof of Theorem \ref{dualityrelation}]\label{produalityralation}
{First, we show the duality relations. Since $(c,\pi) \in \mathcal{A}_0(x,h)$ is arbitrary, taking the supremum over $(c,\pi)  \in \mathcal{A}_0(x,h)$ on the left-hand side of (\ref{3-4}) and recalling (\ref{3-4-0}), we get, for any $z>0$, $x\geq \frac{I}{r}$,
\begin{align*}
\widehat{V}(x,h) &\leq W(z,h)+ z(x-\frac{I}{r}),  
\end{align*}
and thus 
\begin{align}\label{dua}
W(z,h) \geq \sup_{x\geq  \frac{I}{r}}[\widehat{V}(x,h)-z(x-\frac{I}{r})] \quad \text{and} \quad \widehat{V}(x,h) \leq \inf_{z>0}[W(z,h)+z(x-\frac{I}{r})]. 
\end{align}
}

{For the reverse inequalities, observe that equality in (\ref{3-4}) holds if and only if 
\begin{align}\label{3-7-1}
c_s= \mathcal{I}^u(Z^2_s,H^2_s),
\end{align}
and 
\begin{align*}
 \mathbb{E}_{x,h}\bigg[  \int_0^{\infty} \gamma_{s}c_s ds                         \bigg]  = x -\frac{I}{r}.
\end{align*}
In particular, the equality holds in (\ref{3-4}) if (\ref{3-7-1}) is satisfied and $x=\mathcal{X}(z,h)$. This gives $W(z,h)=\widehat{V}(\mathcal{X}(z,h),h)-z(\mathcal{X}(z,h)-\frac{I}{r})$. Combining with (\ref{dua}), this completes the proof of $W(z,h) = \sup_{x\geq  \frac{I}{r}}[\widehat{V}(x,h)-z(x-\frac{I}{r})]$ and shows that, for $z>0$, the maximum in $W(z,h) = \sup_{x\geq  \frac{I}{r}}[\widehat{V}(x,h)-z(x-\frac{I}{r})]$ is attained by $x=\mathcal{X}(z,h)$.
}

For any $x\in (\frac{I}{r},+\infty)$, we introduce the candidate optimal consumption process 
\begin{align*}
	c_s^*= \mathcal{I}^u(\mathcal{Z}(x,h) P^2_s,H^2_s),
	\end{align*}
with $\mathcal{Z}(x,h)$ as defined in Lemma \ref{lemmaa.2}. Then 
\begin{align}\label{wealth}
	\mathbb{E}_{x,h}\bigg[  \int_0^{\infty} \gamma_{s}c_s^* ds                         \bigg]+\frac{I}{r}  = \mathcal{X}(\mathcal{Z}(x,h),h)=x  
	\end{align}
and Lemma \ref{budget2} guarantees the existence of a candidate optimal process $\pi^*_s$ such that $(c^*,\pi^*)\in\mathcal{A}_0(x,h)$. 
By Theorem 3.6.3 in \cite{karatzas1998methods} or Lemma 6.2 in \cite{karatzas2000utility}, one can then show that $(c^*, \pi^*)$ is optimal for the optimization problem $\widehat{V}$.

{Next we determine the optimal strategies and the corresponding optimal wealth process. From (\ref{wealth}) and the strong Markov property we have 
\begin{align*}
	X_t^*= \mathcal{X}(Z^{2*}_t,H_t^2)= \frac{I}{r}-W_z(Z^{2*}_t,H^2_t),
\end{align*}
where $Z^{2*}_t$ is the solution to Equation (\ref{eqz2}) with the initial condition $z^*(x,h):= z^*$ satisfying $W_z(z^*,h)+x-\frac{I}{r}=0$. Recalling the regularity of $W_z$ as in Proposition \ref{Wregularity}, we can apply It\^o's formula to the optimal wealth process $X_t^*$ to find 
\begin{align*}
	dX_t^*= [-W_{zz}(\rho-r+M_t^{2})Z_t^{2*}-W_{zh}(-\delta H^2_t +K)-\frac{1}{2}W_{zzz}\theta^2 (Z_t^{2*})^2] dt +\theta W_{zz}Z_t^{2*}dB_t,
\end{align*}
which, compared with the dynamics in (\ref{2-4}) yields 
\begin{align*}
	\pi^*_t= \frac{\theta}{\sigma} Z^{2*}_t W_{zz}(Z^{2*}_t,H_t^2).
\end{align*}}
\end{proof}

\subsection{Proof of Proposition \ref{finite}}
Before we present the proof of Proposition \ref{finite}, we need the following technical lemma.

\begin{lemma}\label{whbound}
Let $C_0(h):=\frac{m^1h^{-\kappa-1}(h+\frac{K}{\delta})}{(1-\alpha)\delta}$. 
Then
\begin{align*}
 0 \leq W_h(z,h)\leq z^{\frac{\alpha}{\alpha-1}}(1-\alpha)\alpha^{\frac{\alpha}{1-\alpha}} (c_0 C_0(h) +c_1),  \quad \forall (z,h) \in \mathcal{O},
\end{align*} 
where $c_0,c_1$ are suitable positive constants. 
 Moreover, 
\begin{align*}
 \lim_{z \uparrow \infty}W_h(z,h)=0 \quad \text{and} \quad \lim_{z \downarrow 0}W_h(z,h)=\infty \quad \text{for} \ h \in  \mathbb{R}_+.
\end{align*}
\end{lemma}

\begin{proof}

{ From (\ref{3-9-2}) and (\ref{3-9-3}) we compute the partial derivative with respect to $h$,}
\begin{align}\label{b3}
&W_h(z,h)
= z^{\frac{\alpha}{\alpha-1}}(1-\alpha)\alpha^{\frac{\alpha}{1-\alpha}}   \int_0^{\infty}  e^{ \frac{\alpha}{1-\alpha}(rs+\frac{1}{2}\theta^2s) + \frac{\theta^2 \alpha^2s}{2(\alpha-1)^2}}e^{\int^s_0\frac{\rho+M^{2,h}_u}{\alpha-1}du} \Big[\frac{m^1\kappa}{1-\alpha}\nonumber  \\& \times\Big(\int^s_0  \Big(h e^{-\delta u }+\frac{K}{\delta}(1-e^{-\delta u})\Big)^{-\kappa-1}e^{-\delta u}du\Big) \Big(h e^{-\delta s }+\frac{K}{\delta}(1-e^{-\delta s})\Big)+ e^{-\delta s}\Big] ds, 
\end{align}
where $M^{2,h}$ is given by (\ref{newm2}) with initial health level $h$. 
Since $\alpha<1,$ then $ W_h(z,h)\geq 0$ for any $(z,h) \in \mathcal{O}$. On the other hand, since $h e^{-\delta s } \leq h e^{-\delta s }+\frac{K}{\delta}(1-e^{-\delta s}) \leq h+\frac{K}{\delta}$, then   {\((h e^{-\delta u }+\frac{K}{\delta}(1-e^{-\delta u}))^{-\kappa-1} \leq h^{-\kappa-1}e^{(\kappa+1)\delta u}\)}. Therefore, from (\ref{b3}) we have
{
\begin{align*}
& \frac{m^1\kappa}{1-\alpha}
\bigg(\int^s_0
\Big(h e^{-\delta u}+\frac{K}{\delta}(1-e^{-\delta u})\Big)^{-\kappa-1}
e^{-\delta u}du\bigg)
\bigg(h e^{-\delta s}+\frac{K}{\delta}(1-e^{-\delta s})\bigg)
+ e^{-\delta s}
\\
&\leq
\frac{m^1\kappa}{1-\alpha}
\bigg(\int^s_0 h^{-\kappa-1}e^{(\kappa+1)\delta u}e^{-\delta u}du\bigg)
\Big(h+\frac{K}{\delta}\Big)
+ e^{-\delta s}
\\
&\leq
\frac{m^1h^{-\kappa-1}(h+\frac{K}{\delta})}{(1-\alpha)\delta}
e^{\delta \kappa s}+1
=C_0(h) e^{\delta \kappa s}+1.
\end{align*}
}

Combining the above inequality with (\ref{b3}), we have, for suitable $c_0>0,c_1>0$ independent of $(z,h)$,   
\begin{align*}
W_h(z,h) \leq z^{\frac{\alpha}{\alpha-1}}(1-\alpha)\alpha^{\frac{\alpha}{1-\alpha}} [c_0 C_0(h) +c_1],
\end{align*}
where we have used the fact that
\begin{align}\label{b4-1}
    \int_0^{\infty }  e^{ \frac{\alpha}{1-\alpha}(rs+\frac{1}{2}\theta^2s) + \frac{\theta^2 \alpha^2s}{2(\alpha-1)^2}}e^{\int^s_0\frac{(\rho+M^{{2,h}}_u)}{\alpha-1}du} [C_0(h)e^{\delta \kappa s}+1 ] ds  \leq   c_0 C_0(h) +c_1,   
\end{align}
due to Assumption \ref{assume1}.  Finally, it is easy to see from (\ref{b3}) that $\lim_{z \uparrow \infty}W_h(z,h)=0$ and $\lim_{z \downarrow 0}W_h(z,h)=\infty$.

\end{proof}

\begin{proof}[Proof of Proposition \ref{finite}]\label{profinite}
	From (\ref{4-2}) it is clear that $\widehat{J}$ is nonnegative. Moreover, again from (\ref{4-2}), and since $W_h(z,h) \geq 0$ (cf. Lemma \ref{whbound}), we find that
\begin{align*}
& \sup_{ \tau \in \mathcal S } \mathbb{E}_{z,h}\bigg[ \int_0^{\tau }  z  e^{-rs-\theta B_s-\frac{1}{2}\theta^2s}{I\,ds}- \int_0^{\tau }  e^{-\int^s_0 (\rho+ M^{{1}}_{u}) du} KW_h(Z^{1}_s,H^{1}_s)ds  \bigg] \nonumber \\
&\leq \mathbb{E}_{z,h}\bigg[ \int_0^{\infty }  z  e^{-rs-\theta B_s-\frac{1}{2}\theta^2s}{I\,ds}  \bigg] = \frac{Iz}{r},
\end{align*} 
which implies the claim.\end{proof}



\subsection{Proof of Proposition \ref{contJhat}}

\begin{proof}\label{procontJhat}
{Let \((z_n,h_n)\to(z,h)\) in \(\mathcal O\). Then, for every \(s\ge0\), one has
\[
    H_s^{1,h_n}\to H_s^{1,h},
    \qquad
    Z_s^{1,z_n,h_n}\to Z_s^{1,z,h},
    \qquad\text{a.s.}
\]
We claim that, as \(n\to\infty\),
\begin{equation}
    \label{eq:aligned-eq}
\begin{aligned}
& \mathbb E\bigg[
\int_0^\infty
\bigg|
e^{-\int_0^s(\rho+M_u^{1,h_n})du}
    IZ_s^{1,z_n,h_n}
-
e^{-\int_0^s(\rho+M_u^{1,h})du}
    IZ_s^{1,z,h}
\bigg| ds\bigg]
\\
 & 
+  \mathbb{E}\bigg[
\int_0^\infty
\bigg|
e^{-\int_0^s(\rho+M_u^{1,h_n})du}
    KW_h(Z_s^{1,z_n,h_n},H_s^{1,h_n})
\\& \quad -
e^{-\int_0^s(\rho+M_u^{1,h})du}
    KW_h(Z_s^{1,z,h},H_s^{1,h})
\bigg| ds
\bigg]
\longrightarrow0 .
\end{aligned}
\end{equation}
Indeed, for the first expectation in (\ref{eq:aligned-eq}), using the expression of \(Z^1\) as in
\eqref{strong}, we have
\[
\begin{aligned}
&\mathbb E\bigg[
\int_0^\infty
\bigg|
e^{-\int_0^s(\rho+M_u^{1,h_n})du}
    IZ_s^{1,z_n,h_n}
-
e^{-\int_0^s(\rho+M_u^{1,h})du}
    IZ_s^{1,z,h}
\bigg| ds\bigg]  \\
&\qquad =
I|z_n-z|\mathbb E\bigg[\int_0^\infty \gamma_s\,ds\bigg]
=
\frac{I|z_n-z|}{r}
\longrightarrow0 .
\end{aligned}
\]
}

{
As for the second expectation in \eqref{eq:aligned-eq}, the above pointwise
convergence, together with the continuity of \(W_h\) on \(\mathcal O\), gives
\[
\begin{aligned}
&e^{-\int_0^s(\rho+M_u^{1,h_n})du}
KW_h(Z_s^{1,z_n,h_n},H_s^{1,h_n})
\\
&\qquad\longrightarrow
e^{-\int_0^s(\rho+M_u^{1,h})du}
KW_h(Z_s^{1,z,h},H_s^{1,h}),
\qquad \text{a.s.}
\end{aligned}
\]
for every \(s\ge0\). Moreover, Lemma~\ref{whbound}, the explicit expression
of \(Z^1\) in \eqref{strong}, and the fact that the sequence \((z_n,h_n)\)
remains in a compact subset of \(\mathcal O\) around \((z,h)\), for all \(n\)
large enough, yield the estimate, uniformly in \(n\),
\[
\begin{aligned}
&e^{-\int_0^s(\rho+M_u^{1,h_n})du}
KW_h(Z_s^{1,z_n,h_n},H_s^{1,h_n})
\\
&\qquad\le
C\exp\left\{
\left[
\frac{\alpha}{\alpha-1}\left(\rho-r-\frac12\theta^2\right)
-\rho+(\kappa+1)\delta
\right]s
-\frac{\alpha}{\alpha-1}\theta B_s
\right\},
\qquad s\ge0,
\end{aligned}
\]
where the constant \(C>0\) is independent of \(n\). The same bound also
applies to the term starting from \((z,h)\). The right-hand side is
integrable with respect to \(\mathbb P\otimes ds\) by
Assumption~\ref{assume1}. Hence the claimed \(L^1\)-convergence follows from
the dominated convergence theorem.
}

{
Now fix \(\varepsilon>0\). For each \(n\), choose an
\(\varepsilon\)-optimal stopping time for \(\widehat J(z_n,h_n)\). Then, using
the \(L^1\)-convergence above and the fact that the same stopping time is
admissible when the initial point is \((z,h)\), we obtain
\[
\limsup_{n\to\infty}\widehat J(z_n,h_n)
\le
\widehat J(z,h)+\varepsilon .
\]
Conversely, choosing an \(\varepsilon\)-optimal stopping time for
\(\widehat J(z,h)\) and using it for the problem starting from \((z_n,h_n)\),
we obtain
\[
\widehat J(z,h)
\le
\liminf_{n\to\infty}\widehat J(z_n,h_n)+\varepsilon .
\]
Letting \(\varepsilon\downarrow0\) gives
\(\widehat J(z_n,h_n)\to\widehat J(z,h)\). Thus \(\widehat J\) is continuous
on \(\mathcal O\). Finally, since
\[
    J(z,h)=\widehat J(z,h)+W(z,h)-z\frac Ir
\]
and \(W\) is continuous on \(\mathcal O\), \(J\) is continuous as well.
}

\end{proof}

\subsection{Proof of Proposition \ref{limit} }

\begin{proof}\label{prolimit}

{Choosing the admissible stopping time \(\tau\equiv\infty\) in the
definition of \(\widehat J\), we obtain
\begin{equation}
\label{eq:functionaltauinfty}
\begin{aligned}
&\widehat{J}(z,h)
\geq
\mathbb{E}_{z,h}\bigg[
\int_0^{\infty}
\left(
z e^{-rs-\theta B_s-\frac{1}{2}\theta^2s}I
-
e^{-\int^s_0(\rho+M^{{1,h}}_{u})du}
KW_h(Z^{1,z}_s,H^{1,h}_s)
\right)ds
\bigg] \\
& =
\mathbb{E}_{z,h}\bigg[
\int_0^{\infty}
z e^{-rs-\theta B_s-\frac{1}{2}\theta^2s}I ds \bigg]
-
\mathbb{E}_{z,h}\bigg[
\int_0^{\infty}
e^{-\int^s_0(\rho+M^{{1,h}}_{u})du}
KW_h(Z^{1,z}_s,H^{1,h}_s)
ds
\bigg].
\end{aligned}
\end{equation}
The first expectation on the right-hand side of
\eqref{eq:functionaltauinfty} is equal to \(Iz/r\). We next show that the
second expectation converges to zero as \(z\to\infty\).
}

{
Fix \(z_0>0\). By Lemma~\ref{whbound} and the linearity of \(Z^{1,z}\) in
\(z\) (see \eqref{strong}), for \(z\ge z_0\) we have
\[
Z_s^{1,z}=\frac{z}{z_0}Z_s^{1,z_0},
\qquad s\ge0.
\]
Since \(\alpha/(\alpha-1)<0\), it follows that
\[
\left(Z_s^{1,z}\right)^{\frac{\alpha}{\alpha-1}}
=
\left(\frac{z}{z_0}\right)^{\frac{\alpha}{\alpha-1}}
\left(Z_s^{1,z_0}\right)^{\frac{\alpha}{\alpha-1}}
\le
\left(Z_s^{1,z_0}\right)^{\frac{\alpha}{\alpha-1}} .
\]
Therefore, for \(z\ge z_0\), the integrand in the second expectation on the
right-hand side of \eqref{eq:functionaltauinfty} is bounded from above by
\[
C
e^{-\int^s_0(\rho+M^{{1,h}}_{u})du}
\left(Z_s^{1,z_0}\right)^{\frac{\alpha}{\alpha-1}}
\]
for some constant \(C>0\). This upper bound is
\(\mathbb P\otimes ds\)-integrable on
\(\Omega\times[0,\infty)\) by Lemma~\ref{whbound} and
Assumption~\ref{assume1}. Moreover, for every \(s\ge0\),
\[
e^{-\int^s_0(\rho+M^{{1,h}}_{u})du}
KW_h(Z^{1,z}_s,H^{1,h}_s)
\longrightarrow0,
\qquad z\to\infty,
\]
because \(Z_s^{1,z}\to\infty\) and \(W_h(\cdot,H_s^{1,h})\) has power order
\(\alpha/(\alpha-1)<0\). Dominated convergence then yields
\[
\mathbb{E}_{z,h}\bigg[
\int_0^{\infty}
e^{-\int^s_0(\rho+M^{{1,h}}_{u})du}
KW_h(Z^{1,z}_s,H^{1,h}_s)
ds
\bigg]
\longrightarrow0 .
\]
Hence,  $\lim_{z\to\infty}\widehat J(z,h)=\infty.$ Finally, the fact that $\lim_{z\to0}\widehat J(z,h)=0$ directly follows from the bounds of $\widehat{J}$ in Proposition~\ref{finite}.
}
\end{proof}

\subsection{Proof of Theorem \ref{dualityrelation2}}

Before we present the proof of Theorem \ref{dualityrelation2}, we need the following lemma.

{{
\begin{lemma}\label{budget}
For any $\tau\in\mathcal{S}$, let $x\geq 0$ be given and let $c\geq 0$
be a consumption process. Let $\phi$ be an $\mathcal{F}_\tau$-measurable
random variable such that $\mathbb{P}_{x,h}[\phi>0]=1$ and
\[
\mathbb{E}_{x,h}\bigg[
    \gamma_\tau\phi
    +
    \int_0^\tau\gamma_s c_s\,ds
\bigg]
=
x,
\]
where $\gamma_\tau\phi=0$ on $\{\tau=+\infty\}$.
Then there exists an $\mathbb{F}$-progressively measurable portfolio
process $\pi$, defined up to time $\tau$, such that
\[
\int_0^{t\wedge\tau}|\pi_s|^2\,ds<\infty,
\qquad
\mathbb{P}_{x,h}\text{-a.s. for every }t\geq 0,
\]
and the corresponding pre-investment wealth process satisfies 
\begin{align*}
    X_t^{c,\pi,\tau}\geq 0, \quad
0\leq t\leq\tau, \qquad X_\tau^{c,\pi,\tau}=\phi
\quad\text{on }\{\tau<\infty\}.
\end{align*}
Moreover, $\pi$ can be chosen so that the associated pre-investment
wealth process admits the conditional budget representation
\[
X_t^{c,\pi,\tau}
=
\frac{1}{\gamma_t}
\mathbb{E}_{x,h}\!\left[
    \int_t^\tau\gamma_s c_s\,ds+\gamma_\tau\phi
    \,\middle|\,\mathcal{F}_t
\right],
\qquad\text{on }\{t\leq\tau\}.
\]
\end{lemma}

\begin{proof}
The result follows by the same martingale-representation argument as
in the proof of Lemma~6.3 in \cite{karatzas2000utility}, applied on
the stochastic interval $[0,\tau]$ and with the convention
$\gamma_\tau\phi=0$ on $\{\tau=\infty\}$. The details are omitted.
\end{proof}
}}

\begin{proof}[Proof of Theorem \ref{dualityrelation2}]\label{produalityrelation2}
First, we show the duality relations. Since \((c,\pi,\tau)\in\mathcal A(x,h)\)
is arbitrary, taking the supremum over
\((c,\pi,\tau)\in\mathcal A(x,h)\) on the left-hand side of \eqref{3-8}, we
get, for any \(z>0\) and \(x\geq0\),
\[
    V(x,h)\leq J(z,h)+zx.
\]
Hence
\[
    V(x,h)\leq \inf_{z>0}\{J(z,h)+zx\},
    \qquad
    J(z,h)\geq \sup_{x\geq0}\{V(x,h)-zx\}.
\]

We now prove the reverse inequality. It follows from Proposition
\ref{convex} that, for every \(x>0\), there exists a unique
\(z^*=z^*(x,h)>0\) such that $    x=-J_z(z^*,h).$
{For later use, when \(\tau^*(z,h)=\infty\), we set
\[
\gamma_{\tau^*(z,h)}
\left(
    -W_z(Z_{\tau^*(z,h)}^1,H_{\tau^*(z,h)}^1)+\frac Ir
\right)
:=0.
\]
This convention is justified by the transversality condition
\[
\lim_{T\to\infty}
\mathbb E_{z,h}\!\left[
    \gamma_T
    \left(
        -W_z(Z_T^1,H_T^1)+\frac Ir
    \right)
    \mathds{1}_{\{\tau^*(z,h)>T\}}
\right]
=0,
\]
which follows from Assumption~\ref{assume1} and the explicit expression of
\(W_z\) in Proposition~\ref{Wregularity}.}
For \(z>0\), define
\begin{equation}
    \label{def:Chihat}
\widehat{\mathcal X}(z,h)
:=
\mathbb E_{z,h}\bigg[
    \int_0^{\tau^*(z,h)}
        \gamma_s\mathcal I^u(zP_s^1,H_s^1)\,ds
    +
    \gamma_{\tau^*(z,h)}
    \left(
        -W_z(Z_{\tau^*(z,h)}^1,H_{\tau^*(z,h)}^1)+\frac Ir
    \right)
\bigg].
\end{equation}

From Theorem~\ref{dualityrelation}, we know that, on
\(\{\tau^*(z,h)<\infty\}\), at the investment time,
the post-investment optimal wealth is
$    -W_z(Z_{\tau^*(z,h)}^1,H_{\tau^*(z,h)}^1)+\frac Ir.$
For the given \((x,h)\), introduce the candidate optimal consumption process
\[
    c_s^*
    :=
    \mathcal I^u(z^*P_s^1,H_s^1)
    =
    \mathcal I^u(Z_s^{1*},H_s^1),
    \qquad 0\leq s<\tau^*(z^*,h),
\]
where \(Z^{1*}\) denotes the solution to \eqref{3-9} with initial condition
\(Z_0^{1*}=z^*\).

{
We first claim that
\begin{equation}\label{eq:Xhat-Jz}
    \widehat{\mathcal X}(z,h)=-J_z(z,h).
\end{equation}
We prove this identity through the linear equation satisfied by \(J_z\).
If \((z,h)\in\mathcal I\), then \(\tau^*(z,h)=0\), \(J=W-zI/r\) in the
stopping region, and the claim follows immediately. Hence, in what follows,
we assume that \((z,h)\in\mathcal W\).

By Proposition~\ref{regularity1}, \eqref{relation}, \eqref{3-14-1}, and
Proposition~\ref{Wregularity}, direct computations show that \(J\) solves,
on \(\mathcal W\), the linear equation
\begin{equation}
\label{eq:PDEforJ}
   \mathcal L J+\widehat u=0,
\end{equation}
where \(\mathcal L\) is the operator defined in \eqref{defineoprator}.
Moreover, Proposition \ref{prop:sobolev-regularity}, together with
Proposition~\ref{regularity1}, \eqref{relation}, \eqref{3-14-1}, and
Proposition~\ref{Wregularity}, yields that \(z\mapsto J(z,h)\) is
continuously differentiable on \(\mathbb R_+\). By the standard interior parabolic regularity (see, e.g., Theorem 10 in Chapter 3 of \cite{friedman2008partial}), one can differentiate
\eqref{eq:PDEforJ} with respect to \(z\). Hence \(q:=J_z\) satisfies, on
\(\mathcal W\),
\begin{align}\label{eq:Jz-linear}
    \frac12\theta^2z^2 q_{zz}
    +(\rho-r+m^0+m^1h^{-\kappa}+\theta^2)zq_z
    -\delta h q_h
    -rq
    =
    -\widehat u_z(z,h).
\end{align}
The boundary condition is inherited from the stopping payoff, together with
the continuity of \(J_z(\cdot,h)\) and the smooth-fit property established
above; namely,
\[
    q(z,h)=W_z(z,h)-\frac Ir,
    \qquad (z,h)\in\mathcal I\cup\partial\mathcal W.
\]

Recall that \(Z_t^{1,z}=zP_t^1\). Let \((\mathcal K_n)_{n\ge1}\) be an
increasing sequence of compact subsets of \(\mathcal W\) exhausting
\(\mathcal W\), and set
\[
    \sigma_n:=\inf\{t\ge0:(Z_t^1,H_t^1)\notin\mathcal K_n\}.
\]
Since \(q\) is \(C^{2,1}\) in the interior of \(\mathcal W\), It\^o's formula
can be applied to
\[
    \left(
    e^{-\int_0^t(\rho+M_u^{1})du}
    P_t^1 q(Z_t^1,H_t^1)
    \right)_{0\le t\le T\wedge\tau^*(z,h)\wedge\sigma_n}.
\]
On this stopped interval, the relevant derivatives of \(q\) are bounded, and
therefore the stochastic integral has zero expectation. Taking expectations
and using \eqref{eq:Jz-linear}, we obtain
\[
\begin{aligned}
J_z(z,h)
=
\mathbb E_{z,h}\bigg[
&\int_0^{T\wedge\tau^*(z,h)\wedge\sigma_n}
e^{-\int_0^s(\rho+M_u^{1})du}
\widehat u_z(Z_s^1,H_s^1)P_s^1\,ds
\\
&+
e^{-\int_0^{T\wedge\tau^*(z,h)\wedge\sigma_n}(\rho+M_u^{1})du}
P_{T\wedge\tau^*(z,h)\wedge\sigma_n}^1
q(Z_{T\wedge\tau^*(z,h)\wedge\sigma_n}^1,
  H_{T\wedge\tau^*(z,h)\wedge\sigma_n}^1)
\bigg].
\end{aligned}
\]
Letting \(n\uparrow\infty\) and then \(T\uparrow\infty\), dominated
convergence gives
\[
\begin{aligned}
J_z(z,h)
=
\mathbb E_{z,h}\bigg[
&\int_0^{\tau^*(z,h)}
e^{-\int_0^s(\rho+M_u^{1})du}
\widehat u_z(Z_s^1,H_s^1)P_s^1\,ds
\\
&+
e^{-\int_0^{\tau^*(z,h)}(\rho+M_u^{1})du}
\left(
W_z(Z_{\tau^*(z,h)}^1,H_{\tau^*(z,h)}^1)-\frac Ir
\right)
P_{\tau^*(z,h)}^1
\bigg].
\end{aligned}
\]
Here, on the event \(\{\tau^*(z,h)=\infty\}\), the terminal term is understood
as zero. To justify this passage, note that the terminal term at time \(T\)
contains
\[
\mathbb E_{z,h}\!\left[
    \gamma_T
    J_z(Z_T^1,H_T^1)
    \mathds{1}_{\{\tau^*(z,h)>T\}}\mathds{1}_{\{\sigma_n>T\}}
\right].
\]
By Proposition~\ref{convex}, \(J_z\le0\). Moreover, since
$    J=\widehat J+W-z\frac Ir,$
the monotonicity of \(\widehat J\) in \(z\), together with the differentiability
of \(z\mapsto\widehat J(z,h)\), gives \(\widehat J_z\ge0\). Hence
\[
0\le -J_z(z,h)
\le
-W_z(z,h)+\frac Ir .
\]
Using the explicit power expression of \(W_z\) obtained in (\ref{3-9-3}), together with Assumption~\ref{assume1}, this
bound yields the transversality condition
\begin{equation}
\label{eq:limitJ_z}
\lim_{T\to\infty}\lim_{n\to\infty}
\mathbb E_{z,h}\!\left[
    \gamma_T
    J_z(Z_T^1,H_T^1)
    \mathds{1}_{\{\tau^*(z,h)>T\}}\mathds{1}_{\{\sigma_n>T\}}
\right]
=0 .
\end{equation}

Using
\[
    \widehat u_z(z,h)=-\mathcal I^u(z,h),
    \qquad
    e^{-\int_0^s(\rho+M_u^{1})du}P_s^1=\gamma_s,
\]
we obtain
\[
\begin{aligned}
-J_z(z,h)
=
\mathbb E_{z,h}\bigg[
\int_0^{\tau^*(z,h)}
\gamma_s\mathcal I^u(Z_s^1,H_s^1)\,ds
+
\gamma_{\tau^*(z,h)}
\left(
-W_z(Z_{\tau^*(z,h)}^1,H_{\tau^*(z,h)}^1)+\frac Ir
\right)
\bigg].
\end{aligned}
\]
Comparing the last display with the definition \eqref{def:Chihat}, we obtain $    \widehat{\mathcal X}(z,h)=-J_z(z,h)$, as claimed. In particular, if \(z^*\) is chosen so that
\(-J_z(z^*,h)=x\), then
\begin{align}\label{mathcalx}
    \widehat{\mathcal X}(z^*,h)=x .
\end{align}
}

{To simplify notation, set
\(\tau^*:=\tau^*(z^*,h)\), and define the random variable
\[
\phi^*
:=
\begin{cases}
-W_z(Z_{\tau^*}^{1*},H_{\tau^*}^1)+\dfrac Ir,
& \tau^*<\infty,\\[0.4em]
\dfrac Ir,
& \tau^*=\infty.
\end{cases}
\]
We adopt the convention
$\gamma_{\tau^*}\phi^*
:=
\gamma_{\tau^*}\phi^*\mathds{1}_{\{\tau^*<\infty\}},$
so that \(\gamma_{\tau^*}\phi^*=0\) on \(\{\tau^*=\infty\}\).

By Theorem~\ref{dualityrelation}, on \(\{\tau^*<\infty\}\), for each
post-investment state \((Z_{\tau^*}^{1*},H_{\tau^*}^1)\), the quantity 
$    -W_z(Z_{\tau^*}^{1*},H_{\tau^*}^1)+\frac Ir$
is the corresponding optimal post-investment wealth level. Moreover, since
\(W_z<0\) on \(\mathcal O\), we have $\phi^*>\frac Ir$
on  $\{\tau^*<\infty\}.$ On \(\{\tau^*=\infty\}\), the above definition sets \(\phi^*=I/r\). Hence
$    \phi^*\geq \frac Ir>0.$
Moreover, since \(Z_{\tau^*}^{1*}\) and \(H_{\tau^*}^1\) are
\(\mathcal F_{\tau^*}\)-measurable on \(\{\tau^*<\infty\}\), and since
\(W_z\) is continuous, \(\phi^*\) is \(\mathcal F_{\tau^*}\)-measurable.
}

{We now verify the budget identity before investment. By the definition of the
candidate consumption process,
\[
    c_s^*:=\mathcal I^u(Z_s^{1*},H_s^1),
    \qquad 0\leq s<\tau^*,
\]
and by the identity \(\widehat{\mathcal X}(z^*,h)=x\) obtained in
\eqref{mathcalx}, we have
\[
\begin{aligned}
    x
    &=
    \widehat{\mathcal X}(z^*,h)=
    \mathbb E_{z^*,h}
    \bigg[
        \int_0^{\tau^*}
        \gamma_s\mathcal I^u(Z_s^{1*},H_s^1)\,ds
        +
        \gamma_{\tau^*}
        \left(
            -W_z(Z_{\tau^*}^{1*},H_{\tau^*}^1)+\frac Ir
        \right)
        \mathds{1}_{\{\tau^*<\infty\}}
    \bigg]\\
    &=
    \mathbb E_{z^*,h}
    \bigg[
        \int_0^{\tau^*}\gamma_s c_s^*\,ds
        +
        \gamma_{\tau^*}\phi^*
    \bigg].
\end{aligned}
\]
Therefore the pair \((c^*,\phi^*)\) satisfies the static budget constraint up to
the stopping time \(\tau^*\).
}

{We can now apply Lemma~\ref{budget}. It yields the existence of a progressively
measurable portfolio process \(\pi^*\) such that the corresponding wealth
process satisfies
\[
    X_s^{c^*,\pi^*,\tau^*}\geq0,
    \qquad 0\leq s\leq \tau^*,
\]
and
\[
    X_{\tau^*}^{c^*,\pi^*,\tau^*}
    =
    \phi^*
    =
    -W_z(Z_{\tau^*}^{1*},H_{\tau^*}^1)+\frac Ir
    \quad\text{on }\{\tau^*<\infty\}.
\]
In particular, the pre-investment strategy \((c^*,\pi^*)\) is admissible on
\([0,\tau^*]\).
}

{It remains to define the controls after investment. On
\(\{\tau^*<\infty\}\), starting from the
\(\mathcal F_{\tau^*}\)-measurable post-investment state
\[
    \left(
        X_{\tau^*}^{c^*,\pi^*,\tau^*},
        H_{\tau^*}^1
    \right)
    =
    \left(
        \phi^*,
        H_{\tau^*}^1
    \right),
\]
we apply Theorem~\ref{dualityrelation} conditionally on
\(\mathcal F_{\tau^*}\). Equivalently, for \(s\geq\tau^*\), let
\(H_s^{2,*}\) solve
\[
    dH_s^{2,*}=(-\delta H_s^{2,*}+K)ds,
    \qquad H_{\tau^*}^{2,*}=H_{\tau^*}^1,
\]
and let \(Z_s^{2,*}\) be the solution of \eqref{eqz2} on
\([\tau^*,\infty)\) with initial value $    Z_{\tau^*}^{2,*}=Z_{\tau^*}^{1*}.$
Since
\[
    \phi^*
    =
    -W_z(Z_{\tau^*}^{1*},H_{\tau^*}^1)+\frac Ir,
\]
the relation for the optimal wealth process stated in
Theorem~\ref{dualityrelation} is satisfied at the random initial state
\((Z_{\tau^*}^{1*},H_{\tau^*}^1)\). We therefore set, for \(s\geq\tau^*\),
\[
    c_s^*
    =
    \mathcal I^u(Z_s^{2,*},H_s^{2,*}),
    \qquad
    \pi_s^*
    =
    \frac{\theta}{\sigma}
    Z_s^{2,*}W_{zz}(Z_s^{2,*},H_s^{2,*}).
\]
These controls are progressively measurable and admissible after
\(\tau^*\), and the associated post-investment wealth process is
\[
    X_s^{c^*,\pi^*,\tau^*}
    =
    -W_z(Z_s^{2,*},H_s^{2,*})+\frac Ir
    \geq \frac Ir,
    \qquad s\geq\tau^*.
\]
On \(\{\tau^*=\infty\}\), the post-investment controls are irrelevant. Hence
the strategy obtained by pasting the pre-investment controls on
\([0,\tau^*)\) with the post-investment optimal controls on
\([\tau^*,\infty)\) satisfies
$    X_s^{c^*,\pi^*,\tau^*}\geq0,
     0\leq s<\tau^*,$
and
$    X_s^{c^*,\pi^*,\tau^*}\geq \frac Ir,
 s\geq \tau^*.$
Therefore,
$    (c^*,\pi^*,\tau^*)\in\mathcal A(x,h).$
}

We next show that this admissible strategy attains the dual upper bound. Using
the definition of \(\widehat{\mathcal X}\), the identity
\(\widehat{\mathcal X}(z^*,h)=x\), and the relations for
\(\widehat u\) and \(\widehat V\), with all terminal terms at \(\tau^*\)
{interpreted according to the convention above}, we obtain
\[
\begin{aligned}
z^*\widehat{\mathcal X}(z^*,h)
=&\
\mathbb E_{z^*,h}\bigg[
    \int_0^{\tau^*}
        e^{-\int_0^s(\rho+M_u^{1})du}
        Z_s^{1*}c_s^*\,ds +
    e^{-\int_0^{\tau^*}(\rho+M_u^{1})du}
    Z_{\tau^*}^{1*}\phi^*
    \mathds{1}_{\{\tau^*<\infty\}}
\bigg]\\
=&\
\mathbb E_{z^*,h}\bigg[
    \int_0^{\tau^*}
        e^{-\int_0^s(\rho+M_u^{1})du}
        \Big(
            u(c_s^*,H_s^1)-\widehat u(Z_s^{1*},H_s^1)
        \Big)\,ds\\
&\qquad\qquad+
    e^{-\int_0^{\tau^*}(\rho+M_u^{1})du}
    \Big(
        \widehat V(\phi^*,H_{\tau^*}^1)
        -
        W(Z_{\tau^*}^{1*},H_{\tau^*}^1)
        +
        Z_{\tau^*}^{1*}\frac Ir
    \Big)
    \mathds{1}_{\{\tau^*<\infty\}}
\bigg].
\end{aligned}
\]
Rearranging and using the definition of \(J(z^*,h)\), we get
\[
\begin{aligned}
J(z^*,h)+z^*x
=
\mathbb E_{z^*,h}\bigg[
\int_0^{\tau^*}
    e^{-\int_0^s(\rho+M_u^{1})du}
    u(c_s^*,H_s^1)\,ds+
e^{-\int_0^{\tau^*}(\rho+M_u^{1})du}
\widehat V(\phi^*,H_{\tau^*}^1)
\mathds{1}_{\{\tau^*<\infty\}}
\bigg].
\end{aligned}
\]
The right-hand side is the payoff generated by the admissible strategy
constructed above. Hence
$    V(x,h)\geq J(z^*,h)+z^*x.$
Together with
$    V(x,h)\leq \inf_{z>0}\{J(z,h)+zx\},$
and the fact that \(z^*\) minimizes \(J(z,h)+zx\), we conclude that
\[
 {   V(x,h)
    =
    \inf_{z>0}\{J(z,h)+zx\}
    =
    J(z^*,h)+z^*x.}
\]
This also implies $J(z,h)=\sup_{x>0}\{V(x,h)-zx\}$.

It remains to identify the feedback form of the optimal wealth and portfolio
processes before the investment time \(\tau^*=\tau^*(z^*,h)\). {By the budget
representation in Lemma~\ref{budget}, \eqref{def:Chihat}, and the strong
Markov property applied to the shifted process starting from
\((Z_t^{1*},H_t^1)\), the pre-investment wealth generated by the replicating
portfolio satisfies, on \(\{t<\tau^*\}\),
\[
\begin{aligned}
    X_t^*
    =
    \frac{1}{\gamma_t}
    \mathbb E\!\left[
        \int_t^{\tau^*}\gamma_s c_s^*\,ds
        +
        \gamma_{\tau^*}\phi^*
        \,\middle|\,\mathcal F_t
    \right] =
    \widehat{\mathcal X}(Z_t^{1*},H_t^1).
\end{aligned}
\]
Using \eqref{eq:Xhat-Jz}, we then obtain
\[
    X_t^*
    =
    \widehat{\mathcal X}(Z_t^{1*},H_t^1)
    =
    -J_z(Z_t^{1*},H_t^1),
    \qquad 0\leq t<\tau^*.
\]
}
{Since \(J_z\) is smooth in the interior of the waiting region \(\mathcal W\)
(see, e.g., Theorem 10 in Chapter 3 of \cite{friedman2008partial}, as well
as Proposition~\ref{regularity1} and \eqref{relation}, \eqref{3-14-1},
\eqref{3-14}), we can apply It\^o's formula to}
\[
    X_t^*=-J_z(Z_t^{1*},H_t^1),
    \qquad 0\leq t<\tau^*(z^*),
\]
on the time interval \([0,\tau_n]\), where
$   \tau_n
    :=
    \inf\{t\geq0:(Z_t^{1*},H_t^1)\notin \mathcal K_n\}
    \wedge \tau^*(z^*),$
with \((\mathcal K_n)_{n\geq 1}\) being a sequence of compact subsets
exhausting the open set \(\mathcal W\). Comparing the diffusion coefficient
with that of the wealth dynamics in \eqref{2-4} yields
\[
    \pi_t^*\sigma
    =
    \theta Z_t^{1*}J_{zz}(Z_t^{1*},H_t^1),
    \qquad 0\leq t<\tau_n.
\]
Letting \(n\to\infty\), we obtain
\[
    \pi_t^*
    =
    \frac{\theta}{\sigma}Z_t^{1*}J_{zz}(Z_t^{1*},H_t^1),
    \qquad 0\leq t<\tau^*(z^*).
\]
Finally,
\[
    c_t^*=\mathcal I^u(Z_t^{1*},H_t^1),
    \qquad 0\leq t<\tau^*(z^*),
\]
by construction. This completes the proof.

\end{proof}

\section{{Numerical Scheme}}\label{numerical}
\renewcommand\theequation{B.\arabic{equation}}
\renewcommand\thesubsection{B.\arabic{subsection}}

{To compute the free boundary numerically, we use the time-inhomogeneous
boundary \(\widetilde b\) introduced in Section~\ref{sec4}. The original boundary is recovered from
$    b(h)=\widetilde b(0)
$ (cf.\ (\ref{relationb})).
From Theorem~\ref{thm:Jtilde-representation}, the boundary \(\widetilde b\)
satisfies
\begin{align}\label{d1-tilde}
0
=
\int_0^\infty
e^{-\int_0^s(\rho+\widetilde m(t+u))du}
\mathbb E_{\widetilde b(t)}\bigg[
\Big(
I Z_s^1
-
K W_h(Z_s^1,\widetilde h(t+s))
\Big)
\mathds{1}_{\{Z_s^1\geq \widetilde b(t+s)\}}
\bigg]ds,
\end{align}
where \(Z^1\) under $\mathbb P_{\widetilde b(t)}$ satisfies (cf.\ (\ref{strong}))
\[
Z_s^1
=
\widetilde b(t)
\exp\left\{
\int_0^s
\left(\rho-r+\widetilde m(t+u)-\frac12\theta^2\right)du
-\theta B_s
\right\}.
\]}

{Using the power form
\[
W_h(z,h)=z^{\frac{\alpha}{\alpha-1}}\Gamma(h),
\]
where \(\Gamma\) is given in \eqref{a11}, we obtain
\[
W_h(Z_s^1,\widetilde h(t+s))
=
(Z_s^1)^{\frac{\alpha}{\alpha-1}}\Gamma(\widetilde h(t+s)).
\]
Therefore,
\begin{align*}
&\mathbb E_{\widetilde b(t)}\bigg[
\Big(
I Z_s^1
-
K W_h(Z_s^1,\widetilde h(t+s))
\Big)
\mathds{1}_{\{Z_s^1\geq \widetilde b(t+s)\}}
\bigg]
\\
&=
I\mathbb E_{\widetilde b(t)}\left[
Z_s^1\mathds{1}_{\{Z_s^1\geq \widetilde b(t+s)\}}
\right]
-
K\Gamma(\widetilde h(t+s))
\mathbb E_{\widetilde b(t)}\left[
(Z_s^1)^{\frac{\alpha}{\alpha-1}}
\mathds{1}_{\{Z_s^1\geq \widetilde b(t+s)\}}
\right].
\end{align*}}

{By direct computations, using the log-normal distribution of $Z^1$ under $\mathbb P_{\widetilde b(t)}$, we have
\begin{align*}
\mathbb E_{\widetilde b(t)}\left[
Z_s^1\mathds{1}_{\{Z_s^1\geq \widetilde b(t+s)\}}
\right]
&=
\widetilde b(t)
\exp\left\{
\int_0^s(\rho-r+\widetilde m(t+u))du
\right\}
\Phi\left(d^1\left(t,s,\frac{\widetilde b(t+s)}{\widetilde b(t)}\right)\right),
\\
\mathbb E_{\widetilde b(t)}\left[
(Z_s^1)^{\frac{\alpha}{\alpha-1}}
\mathds{1}_{\{Z_s^1\geq \widetilde b(t+s)\}}
\right]
&=
\widetilde b(t)^{\frac{\alpha}{\alpha-1}}
\exp\left\{
\frac{\alpha}{\alpha-1}
\int_0^s\left(\rho-r+\widetilde m(t+u)-\frac12\theta^2\right)du
\right.
\\
&\qquad\qquad\left.
+
\frac12\left(\frac{\alpha}{\alpha-1}\right)^2\theta^2s
\right\}
\Phi\left(d^2\left(t,s,\frac{\widetilde b(t+s)}{\widetilde b(t)}\right)\right),
\end{align*}
where
\begin{align*}
d^1(t,s,y)
&:=
\frac{
\int_0^s\left(\rho-r+\widetilde m(t+u)+\frac12\theta^2\right)du-\log y
}{
{|\theta|}\sqrt{s}
},
\\
d^2(t,s,y)
&:=
\frac{
\int_0^s\left(\rho-r+\widetilde m(t+u)-\frac12\theta^2\right)du
+
\frac{\alpha}{\alpha-1}\theta^2s
-\log y
}{
{|\theta|}\sqrt{s}
}.
\end{align*}
Here \(\Phi(\cdot)\) denotes the cumulative distribution function of a standard
normal random variable.}

{Then \eqref{d1-tilde} becomes
\begin{align}\label{eqb-2-tilde}
0
=
\int_0^\infty
e^{-\int_0^s(\rho+\widetilde m(t+u))du}
\bigg[
&I\widetilde b(t)
\exp\left\{\int_0^s(\rho-r+\widetilde m(t+u))du\right\}
\Phi\left(d^1\left(t,s,\frac{\widetilde b(t+s)}{\widetilde b(t)}\right)\right)
\nonumber\\
&-
K\Gamma(\widetilde h(t+s))
\widetilde b(t)^{\frac{\alpha}{\alpha-1}}
\exp\left\{
\frac{\alpha}{\alpha-1}
\int_0^s\left(\rho-r+\widetilde m(t+u)-\frac12\theta^2\right)du
\right.
\nonumber\\
&\qquad\qquad\left.
+
\frac12\left(\frac{\alpha}{\alpha-1}\right)^2\theta^2s
\right\}
\Phi\left(d^2\left(t,s,\frac{\widetilde b(t+s)}{\widetilde b(t)}\right)\right)
\bigg]ds .
\end{align}}

{
To solve \eqref{eqb-2-tilde} numerically, we use a recursive integration
scheme. For a candidate boundary $\mathcal{P}$ and $(t,y)\in[0,\infty)\times(0,\infty)$,
let $F(t,y; \mathcal{P})$ denote the right-hand side of \eqref{eqb-2-tilde} with
$\widetilde b(t)$ replaced by $y$ and $\widetilde b(t+s)$ replaced by
$\mathcal{P}(t+s)$. Thus the boundary equation can be written as
\begin{align*}
F(t,\widetilde b(t);\widetilde b)=0,\qquad t\geq0.
\end{align*}
Starting from an initial guess $\widetilde b^{(0)}$, we define
$\widetilde b^{(n)}$ recursively by solving, at each grid point $t_i$,
\begin{align*}
F(t_i,y;\widetilde b^{(n-1)})=0
\end{align*}
for the positive root $y$, and then setting
$\widetilde b^{(n)}(t_i)=y$. Between grid points,
$\widetilde b^{(n-1)}$ is evaluated by interpolation. The integral in
\eqref{eqb-2-tilde} is truncated at a sufficiently large upper bound and
computed by numerical quadrature. The iteration is stopped when
\begin{align*}
\max_i|\widetilde b^{(n)}(t_i)-\widetilde b^{(n-1)}(t_i)|
\end{align*}
falls below a prescribed tolerance. For each fixed initial health level $h$,
the original dual boundary is recovered as $b(h)=\widetilde b(0)$.
}


\section*{Acknowledgments}
{The authors are grateful to the Co-Editor, Associate Editor, and two anonymous referees for their stimulating comments, which {greatly helped improve} previous versions of this work.} Funded by the Deutsche Forschungsgemeinschaft (DFG, German Research Foundation) – Project-ID 317210226 – SFB 1283. The work of Shihao Zhu was also supported by {the China Scholarship Council}. The authors also thank Salvatore Federico for useful discussions and for suggesting Lemma A.7 in \cite{federico2017impact}.

\bibliographystyle{plain}
\bibliography{healthcare}

@article{krylov2008lectures,
  title={Lectures on Elliptic and Parabolic Equations in {S}obolev Spaces},
  author={Krylov, N},
  journal={Graduate Studies in Mathematics},
  volume={96},
  year={2008},
  publisher={American Mathematical Society}
}

@article{de2015note,
  title={A note on the continuity of free-boundaries in finite-horizon optimal stopping problems for one-dimensional diffusions},
  author={De Angelis, Tiziano},
  journal={SIAM Journal on Control and Optimization},
  volume={53},
  number={1},
  pages={167--184},
  year={2015},
  publisher={SIAM}
}

@article{guasoni2019consumption,
  title={Consumption, investment and healthcare with aging},
  author={Guasoni, Paolo and Huang, Yu-Jui},
  journal={Finance and Stochastics},
  volume={23},
  number={2},
  pages={313--358},
  year={2019},
  publisher={Springer}
}

@article{ferrari2023optimal,
  title={Optimal retirement choice under age-dependent force of mortality},
  author={Ferrari, Giorgio and Zhu, Shihao},
  journal={arXiv preprint arXiv:2311.12169},
  year={2023}
}

@article{dalgaard2014optimal,
  title={Optimal aging and death: understanding the Preston curve},
  author={Dalgaard, Carl-Johan and Strulik, Holger},
  journal={Journal of the European Economic Association},
  volume={12},
  number={3},
  pages={672--701},
  year={2014},
  publisher={Oxford University Press}
}

@book{friedman1026variational,
  title={Variational principle and Free boundary Problems},
  author={Friedman, Avner},
  publisher={John wiley \& Sons},
  volume={1026},
  pages={149--192},
  year={1982}
}

@article{karatzas2000utility,
  title={Utility maximization with discretionary stopping},
  author={Karatzas, Ioannis and Wang, Hui},
  journal={SIAM Journal on Control and Optimization},
  volume={39},
  number={1},
  pages={306--329},
  year={2000},
  publisher={SIAM}
}

@article{yogo2016portfolio,
  title={Portfolio choice in retirement: Health risk and the demand for annuities, housing, and risky assets},
  author={Yogo, Motohiro},
  journal={Journal of Monetary Economics},
  volume={80},
  pages={17--34},
  year={2016},
  publisher={Elsevier}
}

@article{hugonnier2013health,
  title={Health and (other) asset holdings},
  author={Hugonnier, Julien and Pelgrin, Florian and St-Amour, Pascal},
  journal={Review of Economic Studies},
  volume={80},
  number={2},
  pages={663--710},
  year={2013},
  publisher={Oxford University Press}
}

@article{bolin2020consumption,
  title={Consumption and investment demand when health evolves stochastically},
  author={Bolin, Kristian and Caputo, Michael R},
  journal={Journal of Economic Dynamics and Control},
  volume={114},
  pages={103893},
  year={2020},
  publisher={Elsevier}
}

@article{grossman1972concept,
  title={On the Concept of Health Capital and the Demand for Health},
  author={Grossman, Michael},
  journal={Journal of Political Economy},
  volume={80},
  number={2},
  pages={223--55},
  year={1972},
  publisher={University of Chicago Press}
}

@article{jin2006disutility,
  title={Disutility, optimal retirement, and portfolio selection},
  author={Choi, Kyoung Jin and Shim, Gyoocheol},
  journal={Mathematical Finance},
  volume={16},
  number={2},
  pages={443--467},
  year={2006},
  publisher={Wiley Online Library}
}

@article{merton1971optimum,
  title={Optimum consumption and portfolio rules in a continuous-time model},
  author={Merton, Robert C},
  journal={Journal of Economic Theory},
  volume={3},
  number={4},
  pages={373--413},
  year={1971},
  publisher={Elsevier}
}

@article{yang2018optimal,
  title={Optimal consumption and portfolio selection with early retirement option},
  author={Yang, Zhou and Koo, Hyeng Keun},
  journal={Mathematics of Operations Research},
  volume={43},
  number={4},
  pages={1378--1404},
  year={2018},
  publisher={INFORMS}
}

@article{de2019lipschitz,
  title={On {Lipschitz} continuous optimal stopping boundaries},
  author={De Angelis, Tiziano and Stabile, Gabriele},
  journal={SIAM Journal on Control and Optimization},
  volume={57},
  number={1},
  pages={402--436},
  year={2019},
  publisher={SIAM}
}

@article{de2019free,
  title={On the free boundary of an annuity purchase},
  author={De Angelis, Tiziano and Stabile, Gabriele},
  journal={Finance and Stochastics},
  volume={23},
  number={1},
  pages={97--137},
  year={2019},
  publisher={Springer}
}

@article{peskir2005american,
  title={On the {American} option problem},
  author={Peskir, Goran},
  journal={Mathematical Finance},
  volume={15},
  number={1},
  pages={169--181},
  year={2005},
  publisher={Wiley Online Library}
}

@book{karatzas1998methods,
  title={Methods of mathematical finance},
  author={Karatzas, Ioannis and Shreve, Steven E},
  volume={39},
  year={1998},
  publisher={Springer}
}

@book{peskir2006optimal,
  title={Optimal stopping and free-boundary problems},
  author={Peskir, Goran and Shiryaev, Albert},
  year={2006},
  publisher={Springer}
}

@book{jeanblanc2009mathematical,
  title={Mathematical methods for financial markets},
  author={Jeanblanc, Monique and Yor, Marc and Chesney, Marc},
  year={2009},
  publisher={Springer Science \& Business Media}
}

@article{makeham1860law,
  title={On the law of mortality and the construction of annuity tables},
  author={Makeham, William Matthew},
  journal={Journal of the Institute of Actuaries},
  volume={8},
  number={6},
  pages={301--310},
  year={1860},
  publisher={Cambridge University Press}
}

@article{cropper1977health,
  title={Health, investment in health, and occupational choice},
  author={Cropper, Maureen L},
  journal={Journal of Political Economy},
  volume={85},
  number={6},
  pages={1273--1294},
  year={1977},
  publisher={The University of Chicago Press}
}

@article{ehrlich2000uncertain,
  title={Uncertain lifetime, life protection, and the value of life saving},
  author={Ehrlich, Isaac},
  journal={Journal of Health Economics},
  volume={19},
  number={3},
  pages={341--367},
  year={2000},
  publisher={Elsevier}
}

@article{bolin2001family,
  title={The family as the health producer—when spouses are {Nash}-bargainers},
  author={Bolin, Kristian and Jacobson, Lena and Lindgren, Bj{\"o}rn},
  journal={Journal of Health Economics},
  volume={20},
  number={3},
  pages={349--362},
  year={2001},
  publisher={Elsevier}
}

@article{jacobson2000family,
  title={The family as producer of health—an extended {G}rossman model},
  author={Jacobson, Lena},
  journal={Journal of Health Economics},
  volume={19},
  number={5},
  pages={611--637},
  year={2000},
  publisher={Elsevier}
}

@article{aurand2021mortality,
  title={Mortality and Healthcare: A Stochastic Control Analysis under {Epstein}--{Zin} Preferences},
  author={Aurand, Joshua and Huang, Yu-Jui},
  journal={SIAM Journal on Control and Optimization},
  volume={59},
  number={5},
  pages={4051--4080},
  year={2021},
  publisher={SIAM}
}

@article{smith2009impact,
  title={The impact of childhood health on adult labor market outcomes},
  author={Smith, James P},
  journal={The Review of Economics and Statistics},
  volume={91},
  number={3},
  pages={478--489},
  year={2009},
  publisher={The MIT Press}
}

@article{rosen2004portfolio,
  title={Portfolio choice and health status},
  author={Rosen, Harvey S and Wu, Stephen},
  journal={Journal of Financial Economics},
  volume={72},
  number={3},
  pages={457--484},
  year={2004},
  publisher={Elsevier}
}

@article{atella2012household,
  title={Household portfolio choices, health status and health care systems: A cross-country analysis based on SHARE},
  author={Atella, Vincenzo and Brunetti, Marianna and Maestas, Nicole},
  journal={Journal of Banking \& Finance},
  volume={36},
  number={5},
  pages={1320--1335},
  year={2012},
  publisher={Elsevier}
}

@article{karatzas1998hedging,
  title={Hedging {American} contingent claims with constrained portfolios},
  author={Karatzas, Ioannis and Kou, Steven G},
  journal={Finance and Stochastics},
  volume={2},
  number={3},
  pages={215--258},
  year={1998},
  publisher={Springer}
}

@article{hindy1993optimal,
  title={Optimal consumption and portfolio rules with durability and local substitution},
  author={Hindy, Ayman and Huang, Chi-fu},
  journal={Econometrica},
  pages={85--121},
  year={1993},
  publisher={JSTOR}
}

@book{friedman2008partial,
  title={Partial differential equations of parabolic type, reprint edn.},
  author={Friedman, Avner},
  year={1983},
  publisher={ Robert E. Krieger, Melbourne}
}

@book{bensoussan2011applications,
  title={Applications of variational inequalities in stochastic control},
  author={Bensoussan, Alain and Lions, J-L},
  year={1982},
  publisher={NorthHolland
(Amsterdam)}
}

@article{bank2001optimal,
  title={Optimal consumption choice with intertemporal substitution},
  author={Bank, Peter and Riedel, Frank},
  journal={The Annals of Applied Probability},
  volume={11},
  number={3},
  pages={750--788},
  year={2001},
  publisher={Institute of Mathematical Statistics}
}

@article{federico2017impact,
  title={Impact of time illiquidity in a mixed market without full observation},
  author={Federico, Salvatore and Gassiat, Paul and Gozzi, Fausto},
  journal={Mathematical Finance},
  volume={27},
  number={2},
  pages={401--437},
  year={2017},
  publisher={Wiley Online Library}
}

@article{huang1996pricing,
  title={Pricing and hedging {A}merican options: {A} recursive integration method},
  author={Huang, Jing-zhi and Subrahmanyam, Marti G and Yu, G George},
  journal={The Review of Financial Studies},
  volume={9},
  number={1},
  pages={277--300},
  year={1996},
  publisher={Oxford University Press}
}

\end{document}